\crefname{appsec}{appendix}{appendices}
\numberwithin{equation}{section}
\newtheorem{theorem}{Theorem}[section]
\newtheorem{lemma}{Lemma}[section]
\newtheorem{corollary}{Corollary}[section]
\newtheorem{remark}{Remark}[section]
\newtheorem{definition}{Definition}[section]
\newtheorem{proposition}{Proposition}[section]
\providecommand{\keywords}[1]
{
  \textbf{Key Words and Phrases.} #1
}
\providecommand{\classification}[1]
{
  \textbf{Mathematics Subject Classification.} #1
}
\title{Well-posedness of the mixed-Fractional Nonlinear Schr\"odinger Equation on $\mathbb{R}^2$}
\author{Brian Choi\thanks{Corresponding author. Contact: \texttt{choigh@smu.edu}}, Alejandro Aceves\thanks{Contact: \texttt{aaceves@smu.edu}}%
  }
\affil{Department of Mathematics, Southern Methodist University, Dallas, TX 75275, USA}
\date{}
\begin{document}
\maketitle\vspace{-8ex}
\begin{abstract}
We investigate the well-posedness theory of the 2-D fractional nonlinear Schr\"odinger equation (NLSE) with a mixed degree of derivatives. Motivated by models in optics and photonics where the light propagation is governed by non-quadratic, fractional, and anisotropic dispersion profile, this paper presents first results in this direction. Dispersive estimates are developed in the context of anisotropic Sobolev spaces defined by inhomogeneous symbols. The main model is shown to exhibit scattering for small data in the scaling-critical space. Furthermore the continuity of solution with respect to the dispersion parameter is shown on a compact time interval.
\end{abstract}
\classification{35B30, 35Q40, 35Q55, 35Q60,35R11}\\
\keywords{Fractional, Nonlinear Schr\"odinger, well-posedness, scattering}
\begin{spacing}{0.001}
\tableofcontents
\end{spacing}
\singlespacing
\section{Introduction.}

This paper is concerned with the well-posedness and regularity properties of the mixed fractional nonlinear Schr\"odinger equation (mNLSE)
\begin{equation}\label{mixedNLS}
    i\partial_t u = (D_1^{\alpha_1}+ D_2^{\alpha_2})u + \mu |u|^{p-1}u,\:u(x,y,0) = u_0(x,y),\:(x,y,t) \in \mathbb{R}^2 \times \mathbb{R},
\end{equation}
where $D_1 = (-\partial_{xx})^{\frac{1}{2}},\: D_2 = (-\partial_{yy})^{\frac{1}{2}}$ and $\mu = \pm 1,\: p >1,\: \alpha_1,\alpha_2 \in (0,2]\setminus \{1\},\: \alpha_1 \geq \alpha_2$.

Interest in this model comes from the field of optics and photonics \cite{austin, longhi}, where the fractional operator accounts for engineering spatial diffraction in an optical cavity ($\alpha_2 < 2$) \cite{longhi}, while the second order operator models chromatic dispersion of optical pulses ($\alpha_1 = 2$). Another interesting configuration is that of an array of resonators globally coupled \cite{kirpatrick}, for which optical pulses in each resonator are modeled by the one-dimensional Schr\"odinger equation. In this case, (1.1) represents the continuum approximation of such system.

The well-posedness theory of the fractional NLSE (where $\alpha_1=\alpha_2$) has been investigated by several authors. Our approach is based on that of \cite{1534-0392_2015_6_2265,dinh:hal-01426761} where the contraction mapping argument is developed based on the Strichartz estimates corresponding to $e^{-it(-\Delta)^{\alpha/2}}$. For another approach based on Bourgain's method of restricted norm, see \cite{1078-0947_2015_7_2863}. When $\alpha=1$, the non-dispersive solutions and their blow-up criteria have been studied in \cite{krieger2013nondispersive}.

The presence of mixed derivatives in \cref{mixedNLS} necessitates a non-trivial modification to the standard fixed point argument in solving NLSE, if one wishes to obtain the well-posedness theory at the scaling-critical regularity. One major component of this paper is the analysis of the non-smooth Littlewood-Paley decomposition based on a dyadic family of non-smooth symbols. Furthermore this functional framework gives rise to the use of anisotropic Sobolev spaces.

When $\alpha_1=\alpha_2=2$, \cref{mixedNLS} reduces to the classical NLSE whose various properties are summarized in \cite{tao2006nonlinear,cazenave2003semilinear}. It is well known that the NLSE is ubiquitous in the field of nonlinear waves. It arises as the governing equation of wave-packets in deep water waves, or the electric field envelope of an intense light filament propagating in the atmosphere or it models pulse propagation in an optical fiber. In condensed matter physics this equation known as the Gross–Pitaevskii equation (GPE), describes the dynamics of Bose-Einstein condensates (BEC). The underlying competing effects embedded in the equation are linear dispersive (diffractive) properties of the media, best described in the Fourier space as a frequency, wave-number quadratic relation $\omega = |k|^2$ or as the Laplacian operator in real-space, and nonlinear effects which in the optics application represents intensity dependent index of refraction and in the BEC comes from a postulate that many-body effects can be
compressed into a nonlinear on-site interaction. Alternatively, the discrete NLSE, where  in a 1-dimensional array, the second derivative term is replaced by the well-known center difference scheme 
$\frac{\partial^2U(t,x)}{\partial x^2} \rightarrow \Big(u_{n+1}(t) -2u_n(t) + u_{n+1}(t)\Big)$ has equally important applications. In the field of photonics for example, this discrete operator describes nearest neighbor interactions of optical pulses propagating in waveguide arrays. An important assumption of linear operators such as the Laplacian, is that the medium is considered to be homogeneous. In the discrete case, the physics equivalent assumption is that interactions are only between nearest neighbors (local). In this work we will depart from this assumption of homogeneity and the quadratic dispersion 
profile.

Beyond the classical interpretation of the Laplacian in many physical systems, from the stochastic process perspective (see \cite{oksendal2013stochastic}), the Laplacian is the infinitesimal generator of the Wiener process, which constitutes a special case of the more general L\'{e}vy process characterized by the L\'{e}vy index $\alpha \in (0,2]$, which in turn, generates the fractional Laplacian $(-\Delta)^{\frac{\alpha}{2}} = \mathcal{F}^{-1}|\xi|^\alpha \mathcal{F}$. The fractional NLSE was considered by Laskin (see \cite{laskin2000fractional,laskin2002fractional}) in an attempt to extend the Feynmann path integral over the Brownian-like to the L\'{e}vy-like paths. Whereas Laskin considered $\alpha \in (1,2]$ for the sake of physical applications (for instance, see the discussion on the energy spectrum of a hydrogenlike atom in \cite{laskin2002fractional}), our analysis contains $\alpha_1 = \alpha_2 \in (0,2]\setminus \{1\}$.

This paper is summarized as follows. In \cref{result}, the main results are stated. In \cref{function spaces,wp}, various linear and nonlinear estimates are developed with which the well-posedness proofs are obtained in \cref{gwp}. In \cref{gwp}, global well-posedness for data with finite energy is also discussed. In \cref{alpha regularity}, the regularity of solutions with respect to dispersive parameters ($\alpha_i$) is discussed. Some technical results are contained in the Appendix. 
\section{Main Results.}\label{result}

The model \cref{mixedNLS} is a Hamiltonian PDE with two notable conserved quantities:
\begin{equation}\label{conservation}
\begin{split}
\text{Mass}:\: M[u(t)] &= \iint |u(x,y,t)|^2 dxdy\\
\text{Energy}:\: E[u(t)] &= \iint \frac{1}{2}\Big(|D_1^{\frac{\alpha_1}{2}} u(x,y,t)|^2 + |D_2^{\frac{\alpha_2}{2}}u(x,y,t)|^2\Big) +\frac{\mu}{p+1} |u(x,y,t)|^{p+1} dxdy.
\end{split}
\end{equation}
If $u$ is a classical solution, then so is
\begin{equation}\label{scaling}
    u_\lambda(x,y,t) \coloneqq \lambda^{-\frac{\alpha_1}{p-1}} u\Big(\frac{x}{\lambda},\frac{y}{\lambda^{\frac{\alpha_1}{\alpha_2}}},\frac{t}{\lambda^{\alpha_1}}\Big)
\end{equation}
for every $\lambda>0$; when $\alpha_1 \neq \alpha_2$, the two space variables $x,y$ do not obey the same scaling law. This motivates us to consider the class of data in an anisotropic Sobolev space. Define 
\begin{equation*}
\| f \|_{H^s_{\alpha}(\mathbb{R}^2)} = \| (1+|\xi|^2 + |\eta|^{\alpha})^{\frac{s}{2}} \widehat{f} \|_{L^2_{\xi,\eta}},\: \| f \|_{\dot{H}^s_{\alpha}(\mathbb{R}^2)} = \| (|\xi|^2 + |\eta|^{\alpha})^{\frac{s}{2}} \widehat{f} \|_{L^2_{\xi,\eta}} 
\end{equation*}
where $\xi,\eta$ are the dual variables to $x,y$, respectively, and $\mathcal{F}f = \widehat{f}$ is the Fourier transform of $f$. It is assumed that all function spaces are defined on $\mathbb{R}^2$ unless specified otherwise. By construction, observe that 
\begin{equation*}
    \| u_\lambda \|_{\dot{H}_\alpha^{s}} = \lambda^{s_c-s}\| u\|_{\dot{H}_\alpha^{s}},
\end{equation*}
where $\alpha = \frac{2\alpha_2}{\alpha_1}$ and $s_c = s_c(\alpha_1,\alpha_2) := \frac{1}{2}+\frac{\alpha_1}{2\alpha_2} - \frac{\alpha_1}{p-1}$. We show that \cref{mixedNLS} is well-posed in $H^s_\alpha$ for $s \geq s_c$, i.e., in the scaling subcritical and critical regularities. Since \cref{mixedNLS} admits the time-reversal symmetry $u(x,y,t)\mapsto \overline{u(-x,-y,-t)}$, our analysis is restricted to $[0,T]$. 
\begin{theorem}\label{thm1}
Suppose $s \in (s_c, \lfloor p \rfloor-1]$ for $p \geq 3$ not an odd integer and $s \in (s_c,\infty)$ for $p \geq 3$ an odd integer. Then, \cref{mixedNLS} is locally well-posed in $H_{\alpha}^{s}$.
\end{theorem}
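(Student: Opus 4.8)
The plan is to solve the Duhamel formulation of \cref{mixedNLS},
\begin{align*}
u = \Phi(u), \qquad \Phi(u)(t) := {}& e^{-it(D_1^{\alpha_1}+D_2^{\alpha_2})}u_0 \\
&{}- i\mu\int_0^t e^{-i(t-\tau)(D_1^{\alpha_1}+D_2^{\alpha_2})}\big(|u|^{p-1}u\big)(\tau)\,d\tau,
\end{align*}
by the contraction mapping principle on a short time interval $[0,T]$. The iteration space $X_T$ will be a ball in $C([0,T];H^s_\alpha)$ intersected with the Strichartz norms adapted to the mixed propagator $e^{-it(D_1^{\alpha_1}+D_2^{\alpha_2})}$, and I would equip it with a weaker, $L^2$-based Strichartz metric $d$ so that the rough nonlinearity is never differentiated in the difference estimate.

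The two inputs, both to be assembled beforehand in \cref{function spaces} and \cref{wp}, are: (i) Strichartz estimates for the free group and the retarded Duhamel integral on the anisotropic Sobolev scale $H^s_\alpha$; and (ii) nonlinear estimates controlling $|u|^{p-1}u$ in the dual Strichartz space at regularity $s$. For (i), since the phase $|\xi|^{\alpha_1}+|\eta|^{\alpha_2}$ decouples, frequency-localized dispersive bounds follow from one-dimensional stationary-phase analysis in $\xi$ and in $\eta$ separately --- the hypothesis $\alpha_i\neq 1$ ensuring non-degenerate curvature of the phase away from the origin --- and the $TT^*$ machinery then produces a family of admissible pairs, necessarily anisotropic because $x$ and $y$ carry different scalings when $\alpha_1\neq\alpha_2$ (cf. \cref{scaling}); this is where the non-smooth, anisotropic Littlewood--Paley decomposition adapted to $|\xi|^2+|\eta|^\alpha$ enters, in the guise of the square-function and Bernstein inequalities used to sum the localized pieces. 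For (ii), a fractional Leibniz rule and a fractional chain rule for the anisotropic derivative associated with $(1+|\xi|^2+|\eta|^\alpha)^{s/2}$ give the needed bound; the ceiling $s\leq\lfloor p\rfloor-1$ for $p$ not an odd integer is forced by the limited smoothness of $z\mapsto|z|^{p-1}z$ near the origin, which bounds how often those rules may be iterated, whereas for $p$ an odd integer the nonlinearity is a polynomial in $(u,\overline u)$, hence $C^\infty$, and $s$ is unrestricted.

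Granting (i) and (ii), the argument is classical. Combining them with H\"older in time yields
\begin{gather*}
\|\Phi(u)\|_{X_T}\lesssim \|u_0\|_{H^s_\alpha} + T^{\theta}\,\|u\|_{X_T}^{p},\\
d\big(\Phi(u),\Phi(v)\big)\lesssim T^{\theta}\big(\|u\|_{X_T}^{p-1}+\|v\|_{X_T}^{p-1}\big)\,d(u,v)
\end{gather*}
for some $\theta=\theta(\alpha_1,\alpha_2,p,s)>0$; it is exactly the strict subcriticality $s>s_c$ that makes $\theta>0$, and at $s=s_c$ this time gain is absent and a separate argument would be needed. Choosing the radius of the ball comparable to $\|u_0\|_{H^s_\alpha}$ and then $T$ small depending only on $\|u_0\|_{H^s_\alpha}$, $\Phi$ becomes a contraction on $(X_T,d)$ --- which is complete --- and the fixed point is the sought solution. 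Uniqueness in the ball, persistence $u\in C([0,T];H^s_\alpha)$, and continuous dependence of $u$ on $u_0$ follow from the same two estimates by the usual bootstrap and difference arguments.

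I expect the real obstacle to be ingredient (i): because the multiplier $|\xi|^2+|\eta|^\alpha$ is not smooth on the coordinate axes, the Littlewood--Paley projections built from it are not standard, so their $L^r$-boundedness and the attendant square-function, Bernstein and Mikhlin-type estimates must be proved from scratch rather than quoted, and one must then verify that the resulting anisotropic admissible set is rich enough to absorb the derivative loss from the nonlinearity for every $s\in(s_c,\lfloor p\rfloor-1]$. Once that calculus and the mixed Strichartz estimates are in place, the proof of \cref{thm1} reduces to the Kato-type fixed point scheme above.
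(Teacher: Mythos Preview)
Your proposal is correct and follows essentially the same route as the paper: a Kato-type contraction for the Duhamel map on $X^s=L^\infty_tH^s_\alpha\cap S^s_{q,r}$, equipped with the weaker $X^0$ metric for the difference estimate, with the subcritical time gain $T^{0+}$ produced by H\"older in $t$ together with the anisotropic Sobolev embedding $S^s_{q,r}\hookrightarrow L^q_tL^\infty$. One nuance worth noting: in the paper the ceiling $s\le\lfloor p\rfloor-1$ is actually needed only for the continuous-dependence step, where one must place $|u|^{p-1}u-|v|^{p-1}v$ in $H^s_\alpha$ (hence differentiate the factor $|u|^{p-1}$, which has one fewer derivative than $|u|^{p-1}u$); existence and uniqueness alone go through for $s\le\lfloor p\rfloor$ thanks to the $X^0$ metric.
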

\begin{theorem}\label{thm2}
Let $p>3$. Further assume $s_c \leq \lfloor p \rfloor-1$ if $p$ is not an odd integer. Then, \cref{mixedNLS} is locally well-posed in $H_\alpha^{s_c}$.  Furthermore there exists $\delta=\delta(p,\alpha_1,\alpha_2)>0$ such that whenever $\| u_0 \|_{H^{s_c}_\alpha}<\delta$, there exists a unique $u_{\pm} \in H^{s_c}_\alpha$ such that
\begin{equation}\label{scattering}
    \lim\limits_{t\to\pm\infty}\|u(t) - U(t)u_{\pm} \|_{H^{s_c}_\alpha}=0.
\end{equation}
\end{theorem}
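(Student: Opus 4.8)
The plan is to run a fixed-point argument for the Duhamel formulation
\[
u(t) = U(t)u_0 - i\mu \int_0^t U(t-s)\bigl(|u|^{p-1}u\bigr)(s)\,ds, \qquad U(t) = e^{-it(D_1^{\alpha_1}+D_2^{\alpha_2})},
\]
directly in a scaling-critical, time-global Strichartz-type space $X$ built on the anisotropic Sobolev scale $\dot H^{s_c}_\alpha$ (intersected with $H^{s_c}_\alpha$ to keep the low frequencies). Concretely I would take $X$ to be (an intersection over admissible exponents of) $L^q_t W^{s_c, r}_{\alpha, x}(\mathbb{R}\times\mathbb{R}^2)$-type norms dictated by the mixed-dispersion Strichartz estimates established in the linear-estimates section, together with the dual space $N$ in which the nonlinearity is placed. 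Because we are \emph{at} the critical regularity $s = s_c$, the contraction cannot be closed on a short time interval $[0,T]$ by absorbing a power of $T$; instead I would exploit smallness of $\|U(t)u_0\|_X$ on a suitable interval. For general $u_0 \in H^{s_c}_\alpha$ this smallness holds on some (data-dependent) interval by the dominated-convergence/density argument — truncate $u_0$ in frequency, use that the truncated linear flow has small critical norm on short intervals, and control the tail — which yields local well-posedness; when $\|u_0\|_{H^{s_c}_\alpha} < \delta$ the smallness holds on all of $\mathbb{R}$ directly from the endpoint Strichartz estimate $\|U(t)u_0\|_X \lesssim \|u_0\|_{H^{s_c}_\alpha}$, giving a global solution with $\|u\|_X$ small.

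The analytic heart is the nonlinear estimate
\[
\Bigl\| \int_0^t U(t-s)\bigl(|u|^{p-1}u - |v|^{p-1}v\bigr)(s)\,ds \Bigr\}_X \lesssim \bigl(\|u\|_X^{p-1} + \|v\|_X^{p-1}\bigr)\|u-v\|_X,
\]
which must hold with all norms at the \emph{same} critical level. This is where the hypotheses $p > 3$ and $s_c \le \lfloor p\rfloor - 1$ (for $p$ not an odd integer) enter: to distribute $s_c$ derivatives across the $p$-linear nonlinearity one needs a fractional Leibniz (Kato–Ponce type) rule adapted to the anisotropic Sobolev spaces $H^s_\alpha$ and to the non-smooth Littlewood–Paley decomposition advertised in the introduction, and one needs $|u|^{p-1}u$ to be sufficiently differentiable as a function of $u$ — i.e. $\lfloor p\rfloor - 1 \ge s_c$ — so that the chain rule can be applied $s_c$ times (the odd-integer case being exceptional since then $z\mapsto|z|^{p-1}z$ is a polynomial in $z,\bar z$ and no regularity ceiling is needed). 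The condition $p>3$ is what makes $\|u\|_X^{p-1}\|u-v\|_X$ genuinely $p$-linear with $p-1 > 2$ copies available to feed the Hölder-in-time splitting that converts the critical Strichartz pair into something summable; for $p \le 3$ one would lack the room, which is why Theorem 2.2 excludes it. I expect verifying this anisotropic fractional Leibniz rule against the non-smooth dyadic symbols — in particular commuting the anisotropic multiplier $(|\xi|^2+|\eta|^\alpha)^{s_c/2}$ through products — to be the main obstacle; modulo the function-space machinery set up earlier, the rest is a standard Banach fixed point in a ball of $X$.

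Once the global small-data solution is in hand with $\|u\|_X < \infty$, scattering follows in the usual way: define
\[
u_\pm := u_0 - i\mu \int_0^{\pm\infty} U(-s)\bigl(|u|^{p-1}u\bigr)(s)\,ds,
\]
noting the integral converges in $H^{s_c}_\alpha$ because, by the same dual-Strichartz / nonlinear estimate applied on $[T,\infty)$, $\|U(-s)(|u|^{p-1}u)\|$ integrates against the small quantity $\|u\|_{X([T,\infty))}^{p-1}\to 0$ as $T\to\infty$. Then $\|u(t) - U(t)u_\pm\|_{H^{s_c}_\alpha} = \|\int_t^{\pm\infty} U(t-s)(|u|^{p-1}u)(s)\,ds\|_{H^{s_c}_\alpha} \lesssim \|u\|_{X([t,\pm\infty))}^p \to 0$, which is exactly \eqref{scattering}; uniqueness of $u_\pm$ is immediate from unitarity of $U(t)$ on $H^{s_c}_\alpha$ and the uniqueness already obtained in the fixed-point space.
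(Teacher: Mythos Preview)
Your outline follows the standard critical-LWP template, but it misses the mechanism that actually closes the contraction at $s=s_c$. The space you propose---an intersection of $L^q_t W^{s_c,r}_\alpha$ norms---does not control $\|u\|_{L^{p-1}_t L^\infty_x}$, because the anisotropic Sobolev embedding into $L^\infty$ (\cref{sobolev embedding}) requires $s>\tfrac12+\tfrac{\alpha_1}{2\alpha_2}-\alpha_1(\tfrac12-\tfrac1r)$, and for admissible $(q,r)$ this reads strictly $s>s_c$; at the critical index the embedding fails and no power of $T$ is available to compensate. The paper's remedy is to work instead in the \emph{Besov-refined} Strichartz space $\Tilde{X}^{s_c}=\Tilde{S}^{s_c}_{\infty,2}\cap\Tilde{S}^{s_c}_{q,r}$ (square-function norms over the non-smooth projectors $P_N$), and the decisive nonlinear input is \cref{besov}:
\[
\|u\|_{L^{p-1}_t L^\infty}^{p-1}\ \lesssim\ \|u\|_{\Tilde{S}^{s_c}_{q,r}}^{\,q}\,\|u\|_{\Tilde{S}^{s_c}_{\infty,2}}^{\,p-1-q},\qquad 2<q<p-1.
\]
This is also the real source of the hypothesis $p>3$: one needs an admissible $q$ with $2<q<p-1$, and that interval is nonempty only when $p>3$; the paper ties this to the derivative loss in the Strichartz estimates (\cref{disp est4}), not to a generic ``room for H\"older splitting''. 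Once $L^{p-1}_tL^\infty$ is controlled this way, the Duhamel term is placed in $L^1_t H^{s_c}_\alpha$ via the dual pair $(\tilde q,\tilde r)=(\infty,2)$, and the nonlinearity is handled by the $L^2$-based Schauder estimate $\||u|^{p-1}u\|_{H^{s_c}_\alpha}\lesssim\|u\|_{L^\infty}^{p-1}\|u\|_{H^{s_c}_\alpha}$ of \cref{nonlin.est.} (this is where $s_c\le\lfloor p\rfloor-1$ enters, as you correctly identify). In particular, the anisotropic Kato--Ponce rule in $L^r$ for $r\neq 2$, which you flag as the main obstacle, is never invoked; the genuine obstacle is the Besov-type embedding above and its compatibility with the non-smooth $P_N$.

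Your treatment of the local-versus-global dichotomy (smallness of $\|U(t)u_0\|$ on a data-dependent interval versus on all of $\mathbb{R}$) and of the construction of the scattering states $u_\pm$ is correct and matches the paper's argument.
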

On $[0,T]$, the solution map, if it exists, is not only continuous in the variations in data but also in dispersive parameters. We consider the convergence of solutions as $\alpha_i^\prime \rightarrow \alpha_i,\: i=1,2$ where $\alpha_i^\prime \in (0,2]\setminus \{1\}$ and $\alpha^\prime = \frac{2\alpha_2^\prime}{\alpha_1^\prime}$. However this result cannot be extended to $T=\infty$, which is discussed in \cref{alpha regularity}.   
\begin{theorem}\label{limit local}
Let $p \geq 3,\: T \in (0,\infty)$, and $\frac{1}{2}+\frac{1}{\alpha}<s \leq \lfloor p \rfloor -1$. If $u^\alpha,u^{\alpha^\prime} \in C([0,T];H^s_\alpha)$ satisfy
\begin{equation}\label{continuity}
\begin{split}
    i \partial_t u^\alpha &= (D_1^{\alpha_1} + D_2^{\alpha_2})u^\alpha + \mu |u^\alpha|^{p-1}u^\alpha,\: u^\alpha(0) = u_{0,\alpha} \in H^s_\alpha \\
    i \partial_t u^{\alpha^\prime} &= (D_1^{\alpha_1^\prime} + D_2^{\alpha_2^\prime})u^{\alpha^\prime} + \mu |u^{\alpha^\prime}|^{p-1}u^{\alpha^\prime},\: u^{\alpha^\prime}(0) = u_{0,\alpha^\prime} \in H^s_\alpha 
\end{split}
\end{equation}
where $u_{0,\alpha^\prime}\xrightarrow[]{H^s_\alpha} u_{0,\alpha}$ as $(\alpha_1^\prime,\alpha_2^\prime) \to (\alpha_1,\alpha_2)$ and 
\begin{equation}\label{uniform bound}
\sup\limits_{(\alpha_1^\prime,\alpha_2^\prime):|\alpha_1-\alpha_1^\prime|+|\alpha_2-\alpha_2^\prime|<R}\| u^{\alpha^\prime} \|_{L^{p-1}_{t} L^\infty([0,T]\times\mathbb{R}^2)} \leq c \| u^\alpha \|_{L^{p-1}_{t} L^\infty([0,T]\times \mathbb{R}^2)}<\infty   
\end{equation}
for some $c>0$ and $R = R(\alpha_1,\alpha_2)>0$, then $u^{\alpha^\prime}\xrightarrow[]{}u^\alpha$ in $C([0,T];H^s_\alpha)$ as $(\alpha_1^\prime,\alpha_2^\prime) \to (\alpha_1,\alpha_2)$.
\end{theorem}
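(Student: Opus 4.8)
The plan is a Grönwall argument on the difference $w:=u^{\alpha'}-u^\alpha$, obtained by comparing the two Duhamel representations, the only genuinely new ingredient being the strong convergence of the linear flows $U'(t):=e^{-it(D_1^{\alpha_1'}+D_2^{\alpha_2'})}$ to $U(t):=e^{-it(D_1^{\alpha_1}+D_2^{\alpha_2})}$ on the target space $H^s_\alpha$ (which keeps its fixed exponent $\alpha=2\alpha_2/\alpha_1$ throughout). The point is that $U(t)$ and $U'(t)$ are unitary Fourier multipliers, hence isometries on $H^s_\alpha$, while $U'(t)-U(t)$ acts by multiplication by $e^{-it(|\xi|^{\alpha_1'}+|\eta|^{\alpha_2'})}-e^{-it(|\xi|^{\alpha_1}+|\eta|^{\alpha_2})}$, whose modulus on $[0,T]$ is bounded by $\min\{2,\,T(\,||\xi|^{\alpha_1'}-|\xi|^{\alpha_1}|+||\eta|^{\alpha_2'}-|\eta|^{\alpha_2}|\,)\}$. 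Since $(\beta,\zeta)\mapsto|\zeta|^{\beta}$ is jointly uniformly continuous on compact subsets of $(0,\infty)\times[0,\infty)$ (continuity persists at $\zeta=0$, where $|\zeta|^\beta=0$ for every $\beta>0$), this multiplier tends to $0$ uniformly on each frequency box; together with the smallness of the high-frequency tail of any $g\in H^s_\alpha$, an $\varepsilon/3$-splitting into a compact frequency box and its complement yields $\|(U'(t)-U(t))g\|_{L^\infty([0,T];H^s_\alpha)}\to0$ as $(\alpha_1',\alpha_2')\to(\alpha_1,\alpha_2)$.

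The first technical step is an a priori bound uniform in $\alpha'$. Applying $\|\cdot\|_{H^s_\alpha}$ to the Duhamel formula for $u^{\alpha'}$, using that $U'$ is an $H^s_\alpha$-isometry and the Moser-type estimate of \cref{wp}, $\|F(v)\|_{H^s_\alpha}\lesssim\|v\|_{L^\infty}^{p-1}\|v\|_{H^s_\alpha}$ (valid since $s\le\lfloor p\rfloor-1$; here $F(v)=|v|^{p-1}v$), Grönwall gives $\|u^{\alpha'}(t)\|_{H^s_\alpha}\le\|u_{0,\alpha'}\|_{H^s_\alpha}\exp\!\big(C\|u^{\alpha'}\|_{L^{p-1}_t L^\infty([0,T]\times\mathbb{R}^2)}^{p-1}\big)$. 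By the data convergence $u_{0,\alpha'}\to u_{0,\alpha}$ and the uniform bound \eqref{uniform bound}, the right-hand side is $\le C_\ast<\infty$ for all $(\alpha_1',\alpha_2')$ in a (possibly smaller) ball around $(\alpha_1,\alpha_2)$.

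Now subtract the Duhamel formulas:
\[
w(t)=\big[U'(t)u_{0,\alpha'}-U(t)u_{0,\alpha}\big]-i\mu\!\int_0^t\!\Big(U'(t-\tau)\big[F(u^{\alpha'})-F(u^\alpha)\big](\tau)+\big[U'(t-\tau)-U(t-\tau)\big]F(u^\alpha)(\tau)\Big)d\tau .
\]
For the data term, write $U'(t)(u_{0,\alpha'}-u_{0,\alpha})+(U'(t)-U(t))u_{0,\alpha}$; the first is $\le\|u_{0,\alpha'}-u_{0,\alpha}\|_{H^s_\alpha}\to0$ by the isometry property, the second $\to0$ by the linear convergence above. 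For the propagator-difference term under the integral, the pointwise-in-$\tau$ bound $\|(U'(t-\tau)-U(t-\tau))F(u^\alpha)(\tau)\|_{H^s_\alpha}\to0$ (linear convergence, with $F(u^\alpha)(\tau)\in H^s_\alpha$ by the Moser estimate), dominated by $2\|F(u^\alpha)(\tau)\|_{H^s_\alpha}\lesssim\|u^\alpha(\tau)\|_{L^\infty}^{p-1}\|u^\alpha(\tau)\|_{H^s_\alpha}\in L^1_\tau([0,T])$, gives via dominated convergence that its $L^\infty([0,T];H^s_\alpha)$ norm tends to $0$; call the sum of these two vanishing contributions $\varepsilon(\alpha')\to0$. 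For the nonlinearity-difference term, the isometry property of $U'$ together with the Moser difference estimate of \cref{wp} (again using $s\le\lfloor p\rfloor-1$ and the anisotropic embedding $H^s_\alpha\hookrightarrow L^\infty$, valid since $s>\tfrac12+\tfrac1\alpha$, to absorb $\|w\|_{L^\infty}\lesssim\|w\|_{H^s_\alpha}$) give
\[
\Big\|\int_0^t U'(t-\tau)[F(u^{\alpha'})-F(u^\alpha)](\tau)\,d\tau\Big\|_{H^s_\alpha}\le\int_0^t b(\tau)\,\|w(\tau)\|_{H^s_\alpha}\,d\tau ,
\]
with $b(\tau)\lesssim(\|u^{\alpha'}(\tau)\|_{L^\infty}+\|u^\alpha(\tau)\|_{L^\infty})^{p-1}+C_\ast(\|u^{\alpha'}(\tau)\|_{L^\infty}+\|u^\alpha(\tau)\|_{L^\infty})^{p-2}$; by \eqref{uniform bound}, Hölder on $[0,T]$, and the a priori bound, $\|b\|_{L^1([0,T])}\le C_{\ast\ast}$ uniformly in $\alpha'$. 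Setting $\phi(t):=\|w\|_{L^\infty([0,t];H^s_\alpha)}$ (finite by the a priori bound), we obtain $\phi(t)\le\varepsilon(\alpha')+\int_0^t b(\tau)\phi(\tau)\,d\tau$, and Grönwall yields $\phi(T)\le\varepsilon(\alpha')\,e^{C_{\ast\ast}}\to0$, which is the claim; that the convergence is in $C([0,T];H^s_\alpha)$ (not merely $L^\infty_t$) follows since each $u^{\alpha'},u^\alpha$ is already continuous into $H^s_\alpha$.

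The main obstacle is not a deep estimate but a uniformity issue: the Grönwall coefficient $b$ must be $L^1([0,T])$ \emph{uniformly} in $\alpha'$, and this is precisely where both hypotheses are used, the uniform bound \eqref{uniform bound} feeding the a priori estimate that controls $\|u^{\alpha'}(\tau)\|_{H^s_\alpha}$ inside $b$; without it these norms — and hence $\|b\|_{L^1}$ — could blow up as $\alpha'\to\alpha$, destroying the gain from $\varepsilon(\alpha')\to0$. The only other point requiring care is the joint continuity of $|\zeta|^{\beta}$ down to $\zeta=0$, which legitimizes the uniform-on-boxes convergence of the multiplier difference underlying the linear convergence statement.
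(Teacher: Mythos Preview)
Your proof is correct and follows essentially the same Duhamel--Gr\"onwall strategy as the paper: split the difference into a linear propagator term (handled by the compact-frequency/tail $\varepsilon$-splitting) and a nonlinear term (handled by the Moser/Leibniz difference estimate together with the a priori bound coming from \eqref{uniform bound}). The only noteworthy deviation is that for the propagator-difference under the time integral you invoke dominated convergence in $\tau$ against the majorant $2\|F(u^\alpha)(\tau)\|_{H^s_\alpha}\in L^1([0,T])$, whereas the paper establishes the stronger uniform-in-$t$ convergence of $(U_{\alpha'}(t)-U_\alpha(t))f(t)$ for $f\in C([0,T];H^s_\alpha)$ via an Arzel\`a--Ascoli equicontinuity argument; your route is shorter and fully sufficient here.
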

\section{Function Spaces and Strichartz Estimates.}\label{function spaces}
This section presents various linear estimates to solve \cref{mixedNLS} with data in low-regularity function spaces that are compatible with the anisotropic scaling \cref{scaling}. All implicit constants may depend on $\alpha_1,\alpha_2$. Let $\beta_i = 1-\frac{\alpha_i}{2},\: i=1,2$ throughout this paper.
\begin{definition}
Let $s\in \mathbb{R},\: p\in [1,\infty]$. Define the inhomogeneous and homogeneous derivative operators by $\langle \nabla_\alpha \rangle^s = \mathcal{F}^{-1}(1+\xi^2 + |\eta|^\alpha)^{\frac{s}{2}}\mathcal{F}$ and $| \nabla_\alpha |^s = \mathcal{F}^{-1}(\xi^2 + |\eta|^\alpha)^{\frac{s}{2}}\mathcal{F}$, respectively. Define 
\begin{equation*}
\begin{split}
    W_\alpha^{s,p}(\mathbb{R}^2) &= \{f \in \mathcal{S}^\prime:\langle \nabla_\alpha \rangle^{s}f\in L^p\},\:\dot{W}_\alpha^{s,p}(\mathbb{R}^2) = \{f \in \mathcal{S}^\prime/\mathcal{P}:| \nabla_\alpha |^{s}f\in L^p\}.
\end{split}
\end{equation*}
where $\| f \|_{W_\alpha^{s,p}} \coloneqq \| \langle \nabla_\alpha \rangle^s f \|_{L^p},\: \| f \|_{\dot{W}_\alpha^{s,p}} \coloneqq \| | \nabla_\alpha |^s f \|_{L^p}$.
\end{definition}
As usual, we denote $W_\alpha^{s,2} = H_\alpha^{s}$. By the Plancherel Theorem, it is evident that $H_\alpha^{s}$, for $s\geq 0$, defines a Hilbert space under $\langle f,g \rangle = \iint \widehat{f}(\xi,\eta)\overline{\widehat{g}(\xi,\eta)}(1+\xi^2 + |\eta|^\alpha)^s d\xi d\eta$; for $s \geq 0,\:r\in [1,\infty]$, $W_\alpha^{s,r}(\mathbb{R}^2)$ is also complete (see \cref{BesselPotential}). When $s<0,\:r\in (1,\infty)$, $W_\alpha^{s,r}(\mathbb{R}^2)$ coincides with the dual of $W_\alpha^{-s,r^\prime}(\mathbb{R}^2)$. Moreover, note the inclusion: if $s \geq 0$, then $H^s \hookrightarrow H_\alpha^{s}\hookrightarrow H^{\frac{\alpha}{2}s}$, and the inclusion reverses for $s<0$ where $W^{s,p}$ denotes the classical Sobolev space corresponding to $\alpha=2$.

A set-up of the anisotropic localization of the Fourier space is as follows. Let $\psi \in C^\infty_c((-2,2);[0,1])$ be an even function where $\psi = 1$ for $\xi \in [-1,1]$. Let $\phi(\xi)\coloneqq \psi(\xi)-\psi(2\xi)$; note that $supp(\phi)\subseteq [-2,-\frac{1}{2}]\cup [\frac{1}{2},2]$. Define 
\begin{equation}\label{LP2}
    \begin{split}
    \phi_N(\xi,\eta) &= \phi\Big(\frac{\sqrt{\xi^2+|\eta|^\alpha}}{N}\Big),\: N \in 2^\mathbb{Z}\\
    \phi^{(i)}_{N_i}(\xi,\eta) &= \phi\Big(\frac{|\xi_i|}{N_i}\Big),\: N_i \in 2^\mathbb{Z}, 
    \end{split}
\end{equation}
for $i=1,2$ where $(\xi_1,\xi_2) = (\xi,\eta)$. Define the corresponding operators 
\begin{equation}\label{LP3}
    \begin{split}
    P_N &\coloneqq \mathcal{F}^{-1}\phi_N \mathcal{F},\:P_{\leq N}  \coloneqq \mathcal{F}^{-1}\sum\limits_{N^\prime\leq  N} \phi_{N^\prime}\mathcal{F},\: P_{\sim N} \coloneqq \sum_{\frac{N}{2}\leq N^\prime \leq 2N} P_{N^\prime},
    \end{split}
\end{equation}
where $P^{(i)}_{N_i}, P^{(i)}_{\leq N_i}, P^{(i)}_{\sim N_i}$ are defined similarly. By definition of $\phi$, we have the resolution of identity
\begin{equation*}
\begin{split}
\sum\limits_{N\in 2^\mathbb{Z}} \phi_N(\xi,\eta)=1,\: \forall (\xi,\eta) \neq (0,0);\: \sum\limits_{N_i\in 2^\mathbb{Z}} \phi^{(i)}_{N_i}(\xi_1,\xi_2)=1,\: \forall \xi_i \neq 0.
\end{split}
\end{equation*}
\begin{remark}
The classical definition of (smooth) Littlewood-Paley function $\phi\Big(\frac{\sqrt{\xi^2+\eta^2}}{N}\Big)$ implies that its support is a circular annulus adapted to $\{\sqrt{\xi^2+\eta^2} \simeq N\}$; however in our case (\cref{LP2}), the support of $\phi_N$ is a deformed circle since $\alpha\neq 2$. Furthermore our definition of $\phi_N$ leads to a dyadic family of non-smooth frequency localizations with $\alpha$ as a parameter. Unless specified otherwise, we use $N,N_i$ to denote dyadic integers.
\end{remark}
Since $\phi_N$ is not smooth, $P_N$ is a convolution operator with a kernel that is not rapidly decaying. This marks a deviation from the classical case and thus various mapping properties of $P_N$ require an inspection when $\alpha \in (0,2]$, which we assume in this section.
\begin{lemma}[Bernstein's Inequality]\label{Bernstein}
For all $1 \leq p \leq q \leq \infty$,
\begin{equation}
\begin{split}
\| P_N f \|_{L^q} &\lesssim N^{(1+\frac{2}{\alpha})(\frac{1}{p}-\frac{1}{q})} \| P_N f \|_{L^p}\\
\| P_N P^{(1)}_{N_1}P^{(2)}_{N_2} f \|_{L^q} &\lesssim (N_1 N_2)^{\frac{1}{p}-\frac{1}{q}} \| P_N P^{(1)}_{N_1}P^{(2)}_{N_2} f \|_{L^p}.    
\end{split}
\end{equation}
\begin{remark}
The argument proceeds as in $\alpha=2$. See \cite{tao2006nonlinear}. Note that $supp (\phi_N) \subseteq R_1 \subseteq \mathbb{R}^2$ where $R_1$ is a rectangle of width $\sim N$ and length $\sim N^{\frac{2}{\alpha}}$. Similarly for the second inequality, the intersection of the supports (of $\phi_N, \phi^{(i)}_{N_i}$) has a volume at most $N_1N_2$ regardless of $N$.
\end{remark}
\end{lemma}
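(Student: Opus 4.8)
The plan is to run the classical proof of Bernstein's inequality (cf.\ \cite{tao2006nonlinear}) with two adjustments forced by the present setting: one must not use smoothness, or rapid kernel decay, of the multiplier $\phi_N$ itself, and one must keep track of the two distinct dilation rates $\xi\sim N$ versus $\eta\sim N^{2/\alpha}$.

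First I would record the elementary geometry already flagged in the remark: $\phi_N(\xi,\eta)\neq 0$ forces $\tfrac{N^2}{4}\le \xi^2+|\eta|^\alpha\le 4N^2$, hence $\mathrm{supp}\,\phi_N \subseteq R_N:=[-2N,2N]\times[-4^{1/\alpha}N^{2/\alpha},\,4^{1/\alpha}N^{2/\alpha}]$, a box of dimensions $\sim N\times N^{2/\alpha}$ and area $\sim N^{1+2/\alpha}$ (the constant depends on $\alpha$, which is permitted). Likewise the Fourier multiplier of $P_NP^{(1)}_{N_1}P^{(2)}_{N_2}$ is supported in $\{|\xi|\le 2N_1\}\times\{|\eta|\le 2N_2\}$, a box of area $\lesssim N_1N_2$, independently of $N$.

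Next I would fatten each box by a smooth tensor-product multiplier. Fix even $\Psi^{(1)},\Psi^{(2)}\in C_c^\infty(\mathbb{R})$ with $\Psi^{(1)}\equiv1$ on $[-2,2]$ and $\Psi^{(2)}\equiv1$ on $[-4^{1/\alpha},4^{1/\alpha}]$, and set $\Psi_N(\xi,\eta):=\Psi^{(1)}(\xi/N)\,\Psi^{(2)}(\eta/N^{2/\alpha})$. Since $\Psi_N\equiv1$ on $R_N\supseteq\mathrm{supp}\,\phi_N$, we have $\widehat{P_Nf}=\phi_N\widehat f=\Psi_N\phi_N\widehat f=\Psi_N\widehat{P_Nf}$, i.e.\ $P_Nf=K_N*P_Nf$ with $K_N:=\mathcal F^{-1}\Psi_N$. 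A change of variables gives $K_N(x,y)=N^{1+2/\alpha}\,\check\Psi^{(1)}(Nx)\,\check\Psi^{(2)}(N^{2/\alpha}y)$, so for every $r\in[1,\infty]$,
\[
\|K_N\|_{L^r}=\big(N^{1+2/\alpha}\big)^{1-\frac1r}\,\|\check\Psi^{(1)}\|_{L^r}\,\|\check\Psi^{(2)}\|_{L^r}\ \lesssim\ \big(N^{1+2/\alpha}\big)^{1-\frac1r},
\]
since $\check\Psi^{(1)},\check\Psi^{(2)}$ are Schwartz and hence lie in every $L^r$. Choosing $r$ by $1+\tfrac1q=\tfrac1r+\tfrac1p$, so that $1-\tfrac1r=\tfrac1p-\tfrac1q\in[0,1]$ and $r\in[1,\infty]$, Young's convolution inequality yields $\|P_Nf\|_{L^q}=\|K_N*P_Nf\|_{L^q}\le\|K_N\|_{L^r}\|P_Nf\|_{L^p}\lesssim N^{(1+\frac2\alpha)(\frac1p-\frac1q)}\|P_Nf\|_{L^p}$, which is the first claimed estimate. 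The second estimate follows by the identical scheme with $\Psi_N$ replaced by $\widetilde\Psi(\xi/N_1,\eta/N_2)$, a fixed tensor-product bump equal to $1$ on $[-2,2]^2$; then $\widetilde\Psi(\cdot/N_1,\cdot/N_2)\equiv1$ on the Fourier support of $P_NP^{(1)}_{N_1}P^{(2)}_{N_2}$, the associated kernel satisfies $\|\mathcal F^{-1}[\widetilde\Psi(\cdot/N_1,\cdot/N_2)]\|_{L^r}\lesssim (N_1N_2)^{1-\frac1r}=(N_1N_2)^{\frac1p-\frac1q}$, and Young closes the bound.

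I do not expect a genuine obstacle here. The only point requiring care — the one the remark is signalling — is that $\phi_N$ is merely Lipschitz, so its convolution kernel is not Schwartz and one cannot quote the usual ``$P_N=\widetilde P_N P_N$ with a rapidly decaying kernel''; the remedy is to replace $\widetilde P_N$ by the self-chosen smooth multiplier $\Psi_N$, which is legitimate precisely because $\Psi_N\equiv1$ on $\mathrm{supp}\,\phi_N$. Everything else is bookkeeping of the anisotropic dilation, and the coarser exponent $1+\tfrac2\alpha$ (versus $1+1$ when $\alpha=2$) is exactly the aspect ratio of the box $R_N$; it is sharp, as one sees by testing on a modulated bump adapted to $R_N$.
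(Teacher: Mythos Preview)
Your proof is correct and is precisely the argument the paper is pointing to: the classical Bernstein proof via a smooth auxiliary bump that equals $1$ on the Fourier support, combined with the observation that $\mathrm{supp}\,\phi_N$ sits in a box of dimensions $\sim N\times N^{2/\alpha}$ (and area $\lesssim N_1N_2$ for the triple projection). Your explicit use of a tensor-product Schwartz multiplier $\Psi_N$ in place of a fattened $\phi_N$ is exactly the care the non-smoothness of $\phi_N$ requires, and is the content of the paper's remark.
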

As in the classical case, the non-smooth Littlewood-Paley projections are a family of uniformly bounded operators.
\begin{lemma}\label{Bernstein5}
For $p \in [1,\infty]$, there exists $C(\alpha)>0$ independent of $p,N,N_i$ for $i=1,2$ such that
\begin{equation*}
    \| P_N u \|_{L^p}, \| P^{(i)}_{N_i} u \|_{L^p} \leq C \| u \|_{L^p}. 
\end{equation*}
\end{lemma}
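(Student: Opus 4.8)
The plan is to exhibit $P_N$ and every $P^{(i)}_{N_i}$ as convolution operators against kernels whose $L^1(\mathbb{R}^2)$ norm is controlled by a constant depending only on $\alpha$; Young's inequality $\|K\ast u\|_{L^p}\le\|K\|_{L^1}\|u\|_{L^p}$ then yields the stated bound for every $p\in[1,\infty]$ at once and uniformly in the dyadic parameters. The operators $P^{(i)}_{N_i}$ are the easy case: since $\phi$ is even, $\phi(|\xi_i|/N_i)=\phi(\xi_i/N_i)$, so (for $i=1$, say) the associated kernel is $N_1(\mathcal{F}^{-1}\phi)(N_1 x)\otimes\delta_0(y)$ with $\mathcal{F}^{-1}\phi\in\mathcal{S}(\mathbb{R})$; the operator convolves in $x$ only, and Minkowski's inequality gives $\|P^{(1)}_{N_1}u\|_{L^p}\le\|\mathcal{F}^{-1}\phi\|_{L^1(\mathbb{R})}\|u\|_{L^p}$ for all $p$ and all $N_1$. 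For $P_N$ I would first remove the scale $N$ using the anisotropic dilation adapted to $\xi^2+|\eta|^\alpha$: because $(\xi,\eta)\mapsto(N\xi,N^{2/\alpha}\eta)$ multiplies $\xi^2+|\eta|^\alpha$ by $N^2$, one has $\phi_N(\xi,\eta)=\phi_1(\xi/N,\eta/N^{2/\alpha})$, hence $K_N(x,y)=N^{1+2/\alpha}K_1(Nx,N^{2/\alpha}y)$ with $K_M\coloneqq\mathcal{F}^{-1}\phi_M$, and therefore $\|K_N\|_{L^1(\mathbb{R}^2)}=\|K_1\|_{L^1(\mathbb{R}^2)}$ for every dyadic $N$. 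Everything then reduces to showing $K_1\in L^1(\mathbb{R}^2)$.

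To estimate $\|K_1\|_{L^1}$ I would insert an anisotropic polynomial weight and apply Cauchy--Schwarz,
\begin{equation*}
\|K_1\|_{L^1}\le\bigl\|\langle x\rangle\langle y\rangle^{b}K_1\bigr\|_{L^2}\,\bigl\|\langle x\rangle^{-1}\langle y\rangle^{-b}\bigr\|_{L^2},
\end{equation*}
where $b>\tfrac12$ will be chosen below; the second factor is finite since $\int\langle x\rangle^{-2}\,dx<\infty$ and $\int\langle y\rangle^{-2b}\,dy<\infty$. By Plancherel the first factor is comparable to the norm of $\phi_1=\phi\bigl(\sqrt{\xi^2+|\eta|^\alpha}\bigr)$ in the mixed-regularity space having one square-integrable derivative in $\xi$ and $b$ square-integrable derivatives in $\eta$. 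So the lemma is reduced to verifying that $\phi_1$ has exactly these regularities for some $b>\tfrac12$, which will turn out to be possible precisely for $b\in(\tfrac12,\alpha+\tfrac12)$ — a nonempty interval because $\alpha>0$, and this is where the hypothesis $\alpha\in(0,2]$ is used.

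Checking the regularity of $\phi_1$ is the heart of the matter (and the place where the non-smoothness of $\phi_N$ manifests). The function $\phi_1$ is supported in the compact set $\{1/2\le\sqrt{\xi^2+|\eta|^\alpha}\le2\}$, on which $\xi^2+|\eta|^\alpha\ge1/4$; consequently $\sqrt{\xi^2+|\eta|^\alpha}$ is a smooth function of $\xi$ there with all $\xi$-derivatives bounded, so the $\xi$-regularity is free. The only non-smoothness sits on $\{\eta=0\}$, and it arises only when $|\xi|$ is bounded below — for small $|\xi|$ the support constraint forces $|\eta|$ away from $0$, making $\phi_1(\xi,\cdot)$ smooth in $\eta$ with uniformly bounded seminorms. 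For $|\xi|$ in the relevant bounded-below range I would set $t=|\eta|^\alpha$ and Taylor-expand the smooth map $t\mapsto\phi(\sqrt{\xi^2+t})$ about $t=0$; after multiplying by a cutoff in $\eta$ this exhibits $\phi_1(\xi,\cdot)$ as a smooth function of $\eta$, plus a coefficient (bounded in $\xi$) times $|\eta|^\alpha$, plus a remainder vanishing like $|\eta|^{2\alpha}$ with bounded-in-$\xi$ seminorms, and the same structure holds for $\partial_\xi\phi_1$. Since a localized $|\eta|^\alpha$ lies in $H^b$ in the $\eta$ variable exactly for $b<\alpha+\tfrac12$ (the $|\eta|^{2\alpha}$ remainder being even more regular), the quantities $\|\phi_1(\xi,\cdot)\|_{H^b_\eta}$ and $\|\partial_\xi\phi_1(\xi,\cdot)\|_{H^b_\eta}$ are finite and bounded uniformly in $\xi$ for every $b<\alpha+\tfrac12$; integrating these uniform bounds over the compact $\xi$-support then yields the required mixed Sobolev bound on $\phi_1$, hence $\|K_1\|_{L^1}=:C(\alpha)<\infty$, and Young's inequality closes the argument.

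The main obstacle I anticipate is exactly this uniform-in-$\xi$ control of the $|\eta|^\alpha$-singularity — in particular checking that the coefficient of $|\eta|^\alpha$ and the Taylor remainder stay bounded as $|\xi|$ approaches the threshold $\sim1/2$ at which $\phi_1$ first becomes nonzero on $\{\eta=0\}$; the dilation reduction, the duality/Cauchy--Schwarz step, and the final application of Young are all routine. As an alternative to the weighted-$L^2$ route one could estimate $K_1$ pointwise, integrating by parts arbitrarily often in $\xi$ and splitting the $\eta$-integral at $|\eta|\sim|y|^{-1}$ to extract the decay $|K_1(x,y)|\lesssim\langle x\rangle^{-k}\langle y\rangle^{-1-\alpha}$, which is again integrable on $\mathbb{R}^2$; but the duality argument is shorter and avoids the case analysis.
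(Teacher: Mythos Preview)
Your proposal is correct and takes a genuinely different route from the paper. Both arguments reduce, via anisotropic scaling and Young's inequality, to showing $K_1=\check\phi_1\in L^1(\mathbb{R}^2)$. From there the paper proceeds by pointwise decay: it gets arbitrary polynomial decay in $x$ from smoothness of $\phi_1$ in $\xi$, and then argues separately for $\alpha>1$ (using that $\partial_\eta\phi_1$ is H\"older of order $\alpha-1$), $\alpha<1$ (via the fractional Leibniz rule and an explicit, somewhat lengthy computation of $D_2^{\alpha-\epsilon}$ applied to $|\eta|^{\alpha-1}\mathrm{sgn}(\eta)\chi$), and $\alpha=1$, ultimately obtaining bounds of the form $|K_1(x,y)|\lesssim(1+|x|^k+|y|^{1+\alpha-\epsilon})^{-1}$. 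Your weighted-$L^2$/Plancherel approach trades these pointwise bounds for a single Sobolev-regularity check on $\phi_1(\xi,\cdot)$ and $\partial_\xi\phi_1(\xi,\cdot)$ in $H^b_\eta$ for some $b>\tfrac12$, which is cleaner and avoids the case split entirely; the price is that you do not obtain explicit pointwise decay rates for $K_1$ (which the paper later reuses, e.g.\ in the proof of \cref{Bernstein3}). Your ``alternative'' pointwise route is essentially the paper's argument.

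One small point to tighten: when you write that the Taylor remainder ``vanishes like $|\eta|^{2\alpha}$ with bounded-in-$\xi$ seminorms'', this first-order expansion suffices only when $2\alpha>1$. For $\alpha\le\tfrac12$ you should expand $t\mapsto\phi(\sqrt{\xi^2+t})$ to order $M$ with $(M+1)\alpha>1$; then the remainder $|\eta|^{(M+1)\alpha}h_\xi(|\eta|^\alpha)$ is genuinely $C^1$ in $\eta$ with bounded derivative (uniformly in $\xi$, since $g_\xi^{(j)}(0)$ and $h_\xi$ are controlled by $\|\phi\|_{C^{M+1}}$ on the support where $\xi^2+t\ge\tfrac14$), hence lies in $H^b_\eta$ for every $b\le1$. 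The intermediate terms $c_j(\xi)|\eta|^{j\alpha}\chi(\eta)$ with $1\le j\le M$ are each in $H^b_\eta$ for $b<j\alpha+\tfrac12$, so the binding constraint is indeed $b<\alpha+\tfrac12$ from $j=1$, exactly as you claim.
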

\begin{proof}
To prove the first estimate, it suffices to show
\begin{equation*}
    \sup_{N\in 2^\mathbb{Z}}\|\check{\phi}_N\|_{L^1}<\infty,
\end{equation*}
and use the the Young's inequality on $P_N u = \check{\phi}_N \ast u$; the second estimate admits a simpler proof since the symbol of $P^{(i)}_{N_i}$ is smooth in $\xi,\eta$. Since
\begin{equation*}
    \check{\phi}_N(x,y) = N^{1+\frac{2}{\alpha}}\check{\phi}_1(Nx,N^{\frac{2}{\alpha}}y),
\end{equation*}
it suffices to show $\Phi:=\check{\phi}_1\in L^1(\mathbb{R}^2)$. Since $\phi_1 \in L^1(\mathbb{R}^2)$ is smooth in $\xi$ with a compact support $supp(\phi_1)\subseteq \mathbb{R}^2$ that stays away from the origin,
\begin{equation*}
\sup\limits_{x,y\in \mathbb{R}}(1+|x|^k) |\Phi(x,y)| \lesssim \| \phi_1 \|_{L^1}+\| \partial_\xi^k \phi_1 \|_{L^1} \lesssim_k 1.  
\end{equation*}
For the decay in $y$, first assume $\alpha>1$. Observing that $\partial_\eta\phi_1$ is uniformly H\"older continuous of order $\alpha-1$, we have
\begin{equation*}
\sup\limits_{x,y\in\mathbb{R}}|y|^{\alpha}|\Phi(x,y)| \lesssim_\alpha 1,    
\end{equation*}
and altogether,
\begin{equation}\label{decay1}
    |\Phi(x,y)| \lesssim_{k,\alpha} \frac{1}{1+|x|^k+|y|^{\alpha}},\:\forall k \in \mathbb{N},
\end{equation}
from which follows
\begin{equation}\label{l1}
    \| \Phi \|_{L^1} \lesssim \int_0^\infty \int_0^\infty |\Phi(x,y)| dydx \lesssim_{k,\alpha} \int_0^\infty (1+|x|)^{-k\frac{\alpha-1}{\alpha}} dx < \infty,
\end{equation}
and by taking $k\in\mathbb{N}$ sufficiently big depending on $\alpha$, it follows that $\Phi \in L^1(\mathbb{R}^2)$.

For $\alpha<1,\: \epsilon>0$, we claim
\begin{equation}\label{decay2}
    |\Phi(x,y)| \lesssim_{k,\alpha,\epsilon} \frac{1}{1+|x|^k+|y|^{1+\alpha-\epsilon}},\:\forall k \in \mathbb{N}.
\end{equation}
Note that $\partial_\eta \phi_1 (\xi,\eta)=\frac{\alpha}{2} |\eta|^{\alpha-1}sgn(\eta)\frac{\phi^\prime(\sqrt{\xi^2+|\eta|^\alpha})}{\sqrt{\xi^2+|\eta|^\alpha}}\in L^1(\mathbb{R}^2)$. By the fractional Leibniz rule, we have the pointwise estimate (in $\xi$)
\begin{equation}\label{frac leibniz}
\begin{split}
&\| D_2^{\alpha-\epsilon}\partial_\eta \phi_1 \|_{L^1_\eta}\\
&\lesssim \| D_2^{\alpha-\epsilon}(|\eta|^{\alpha-1}sgn(\eta)\chi)\|_{L^{p_1}_\eta} \|\frac{\phi^\prime(\sqrt{\xi^2+|\eta|^\alpha})}{\sqrt{\xi^2+|\eta|^\alpha}} \|_{L^{p_1^\prime}_\eta} + \| |\eta|^{\alpha-1} sgn(\eta) \chi \|_{L^{p_2}_\eta} \| D_2^{\alpha-\epsilon} \frac{\phi^\prime(\sqrt{\xi^2+|\eta|^\alpha})}{\sqrt{\xi^2+|\eta|^\alpha}} \|_{L^{p_2^\prime}_\eta}\\
&:= I + II,
\end{split}
\end{equation}
where $\chi = \chi(\eta)$ is the characteristic function on $[-2^{\frac{2}{\alpha}},2^{\frac{2}{\alpha}}]$ and $p_1,p_2 \in (1,\infty)$ are to be determined.

The second term is estimable by the Sobolev embedding. To ensure $|\cdot|^{\alpha-1}\chi(\cdot) \in L^{p_2}(\mathbb{R})$, let $p_2 \in (1,\frac{1}{1-\alpha})$. Let $\Tilde{p} \in (\frac{1}{\frac{1}{p_2^{\prime}}+1-(\alpha-\epsilon)},p_2^\prime)\cap (1,\frac{1}{1-\alpha})$ so that $W^{1,\Tilde{p}}(\mathbb{R})\hookrightarrow W^{\alpha-\epsilon,p_2^\prime}(\mathbb{R})$ and $\frac{\phi^\prime(\sqrt{\xi^2+|\eta|^\alpha})}{\sqrt{\xi^2+|\eta|^\alpha}} \in W^{1,\Tilde{p}}(\mathbb{R}_\eta)$ uniformly in $\xi$. Then,
\begin{equation*}
    II \lesssim \|\frac{\phi^\prime(\sqrt{\xi^2+|\cdot|^\alpha})}{\sqrt{\xi^2+|\cdot|^\alpha}} \|_{W^{\alpha-\epsilon,p_2^\prime}(\mathbb{R})} \lesssim \|\frac{\phi^\prime(\sqrt{\xi^2+|\cdot|^\alpha})}{\sqrt{\xi^2+|\cdot|^\alpha}} \|_{W^{1,\Tilde{p}}(\mathbb{R})} \leq C<\infty,
\end{equation*}
where $C>0$ is independent of $\xi$ since $\xi^2 + |\eta|^\alpha \simeq 1$ and $\phi^{(k)} = O_k(1)$.

To estimate $I$, we use the integral definition of the fractional Laplacian:\footnote{The numerical value of $c_{d,\alpha}=\frac{4^{\alpha/2}\Gamma(\frac{d+\alpha}{2})}{\pi^{d/2}|\Gamma(-\frac{\alpha}{2})|}$ is not used explicitly in our analysis.}
\begin{equation*}
    (-\Delta)^{\frac{\alpha}{2}}f(x) = c_{d,\alpha} \int_{\mathbb{R}^d} \frac{f(x)-f(y)}{|x-y|^{d+\alpha}}dy.
\end{equation*}
Let $f(\eta) = |\eta|^{\alpha-1} sgn(\eta)\chi(\eta)$ and $c=2^{\frac{2}{\alpha}}$. Since $f$ is odd, so is $D_2^{\alpha-\epsilon}f$, and hence assume $\eta>0$ without loss of generality. Then,
\begin{equation*}
\begin{split}
    D_2^{\alpha-\epsilon}f (\eta) &\simeq \int_{-c}^c \frac{f(\eta)-f(\eta_1)}{|\eta-\eta_1|^{1+\alpha-\epsilon}}d\eta_1\\
    &=\int_0^c \frac{\eta^{\alpha-1}\chi(\eta)-\eta_1^{\alpha-1}}{|\eta-\eta_1|^{1+\alpha-\epsilon}}d\eta_1 + \int_0^c \frac{\eta^{\alpha-1}\chi(\eta)+\eta_1^{\alpha-1}}{|\eta+\eta_1|^{1+\alpha-\epsilon}}d\eta_1=:A+B.
\end{split}
\end{equation*}
We show by direct computation the estimate:
\begin{equation}\label{fraclaplacian}
    |D_2^{\alpha-\epsilon}f(\eta)| \lesssim_{\alpha,\epsilon}
    \begin{cases} 
    \eta^{-1+\epsilon} &\mbox{if } \eta \in (0,\frac{3}{4}c]\\
    |\eta-c|^{-(\alpha-\epsilon)} & \mbox{if } \eta\in (\frac{3}{4}c,c)\cup (c,2c)\\
    \eta^{-(1+\alpha-\epsilon)} &\mbox{if } \eta \in (2c,\infty).\end{cases}
\end{equation}
If so, take $p_1=1+$ so that
\begin{equation*}
    \| D_2^{\alpha-\epsilon}(|\eta|^{\alpha-1}sgn(\eta)\chi)\|_{L^{p_1}_\eta} < \infty,
\end{equation*}
and
\begin{equation*}
    \|\frac{\phi^\prime(\sqrt{\xi^2+|\eta|^\alpha})}{\sqrt{\xi^2+|\eta|^\alpha}} \|_{L^{p_1^\prime}_\eta} \lesssim \|\frac{\phi^\prime(\sqrt{\xi^2+|\eta|^\alpha})}{\sqrt{\xi^2+|\eta|^\alpha}} \|_{L^{\infty}_\eta} \lesssim \|\frac{\phi^\prime(\sqrt{\xi^2+|\eta|^\alpha})}{\sqrt{\xi^2+|\eta|^\alpha}} \|_{L^{\infty}(\mathbb{R}^2)}<\infty.
\end{equation*}
Since the term $B$ obeys estimates similar to those of $A$, we work with $A$. By the Fundamental Theorem of Calculus, if $\eta,\eta_1 \in (a,b)$ where $a>0$, then
\begin{equation*}
    |\eta^{\alpha-1} - \eta_1^{\alpha-1}| \lesssim a^{\alpha-2}|\eta-\eta_1|.
\end{equation*}
Let $\eta \in (0,\frac{3}{4}c]$. Then,
\begin{equation*}
    \biggl|\int_0^{\eta/2} \frac{\eta^{\alpha-1}-\eta_1^{\alpha-1}}{|\eta-\eta_1|^{1+\alpha-\epsilon}}d\eta_1\biggr| \lesssim \int_0^{\eta/2} \frac{\eta_1^{\alpha-1}}{\eta^{1+\alpha-\epsilon}}d\eta_1 \lesssim \eta^{-1+\epsilon}.
\end{equation*}
For $\eta \in (0,\frac{c}{2})$,
\begin{equation*}
\begin{split}
    \biggl|\int_{\eta/2}^{2\eta} \frac{\eta^{\alpha-1}-\eta_1^{\alpha-1}}{|\eta-\eta_1|^{1+\alpha-\epsilon}}d\eta_1\biggr| &\lesssim \eta^{\alpha-2} \int_{\eta/2}^{2\eta} \frac{d\eta_1}{|\eta-\eta_1|^{\alpha-\epsilon}} \lesssim \eta^{-1+\epsilon},\\
     \biggl|\int_{2\eta}^{c} \frac{\eta^{\alpha-1}-\eta_1^{\alpha-1}}{|\eta-\eta_1|^{1+\alpha-\epsilon}}d\eta_1\biggr| &\lesssim \int_{2\eta}^c \frac{\eta^{\alpha-1}}{|\eta-\eta_1|^{1+\alpha-\epsilon}}d\eta_1 \lesssim \eta^{-1+\epsilon}.
\end{split}
\end{equation*}
Similarly for $\eta \in [\frac{c}{2},\frac{3}{4}c]$,
\begin{equation*}
    \biggl|\int_{\eta/2}^{c} \frac{\eta^{\alpha-1}-\eta_1^{\alpha-1}}{|\eta-\eta_1|^{1+\alpha-\epsilon}}d\eta_1\biggr| \lesssim \eta^{\alpha-2} \int_{\eta/2}^{c} \frac{d\eta_1}{|\eta-\eta_1|^{\alpha-\epsilon}} \lesssim \eta^{-1+\epsilon}.
\end{equation*}
Let $\eta \in (\frac{3}{4}c,c)$. Then,
\begin{equation*}
\begin{split}
    \biggl|\int_0^{\eta/2} \frac{\eta^{\alpha-1}-\eta_1^{\alpha-1}}{|\eta-\eta_1|^{1+\alpha-\epsilon}}d\eta_1\biggr| &\lesssim \int_0^{\eta/2} \frac{\eta_1^{\alpha-1}}{\eta^{1+\alpha-\epsilon}}d\eta_1 \lesssim \eta^{-1+\epsilon}\simeq_c 1\\
    \biggl|\int_{\eta/2}^{2\eta-c} \frac{\eta^{\alpha-1}-\eta_1^{\alpha-1}}{|\eta-\eta_1|^{1+\alpha-\epsilon}}d\eta_1\biggr|&\lesssim \int_{\eta/2}^{2\eta-c} \frac{\eta^{\alpha-1}}{|\eta-\eta_1|^{1+\alpha-\epsilon}}d\eta_1 \lesssim_c |\eta-c|^{-(\alpha-\epsilon)}\\
    \biggl|\int_{2\eta-c}^{c} \frac{\eta^{\alpha-1}-\eta_1^{\alpha-1}}{|\eta-\eta_1|^{1+\alpha-\epsilon}}d\eta_1\biggr|&\lesssim \int_{2\eta-c}^c \frac{(2\eta-c)^{\alpha-2}}{|\eta-\eta_1|^{\alpha-\epsilon}}d\eta_1 \lesssim_c 1+ |\eta-c|^{1-(\alpha-\epsilon)}.
\end{split}
\end{equation*}
Let $\eta \in (c,2c)$. Then,
\begin{equation*}
\begin{split}
    |A|&\leq \biggl|\int_{0}^{c/2} \frac{\eta_1^{\alpha-1}}{|\eta-\eta_1|^{1+\alpha-\epsilon}}d\eta_1\biggr|+ \biggl|\int_{c/2}^{c} \frac{\eta_1^{\alpha-1}}{|\eta-\eta_1|^{1+\alpha-\epsilon}}d\eta_1\biggr|\\
    &\lesssim \int_0^{c/2} \frac{\eta_1^{\alpha-1}}{c^{1+\alpha-\epsilon}}d\eta_1 + c^{\alpha-1} \int_{c/2}^c \frac{d\eta_1}{|\eta-\eta_1|^{1+\alpha-\epsilon}} \lesssim_c 1+ |\eta-c|^{-(\alpha-\epsilon)}.
\end{split}
\end{equation*}
Finally let $\eta \in (2c,\infty)$. Since $|\eta-\eta_1| \geq \frac{\eta}{2}$,
\begin{equation*}
    |A|\lesssim \eta^{-(1+\alpha-\epsilon)}\int_0^c \eta_1^{\alpha-1} d\eta_1 \lesssim \eta^{-(1+\alpha-\epsilon)},
\end{equation*}
and thus \cref{fraclaplacian} has been shown. Arguing as \cref{l1}, $\Phi \in L^1(\mathbb{R}^2)$ for $\alpha \in (0,1)$.

For $\alpha=1$, it follows that
\begin{equation}\label{decay3}
    |\Phi(x,y)| \lesssim_{k,\epsilon} \frac{1}{1+|x|^k+|y|^{2-\epsilon}},\:\forall k \in \mathbb{N},
\end{equation}
by arguing via the fractional Leibniz rule as \cref{frac leibniz}. In particular, the estimation of $I$ in \cref{frac leibniz} admits a simpler proof since $|\eta|^{\alpha-1} = 1$ contains no singularity at the origin.
\end{proof}
\begin{remark}\label{hypergeometric}
In \cref{frac leibniz}, the expression $D_2^{\alpha-\epsilon}(|\eta|^{\alpha-1}sgn(\eta)\chi)$ can be expressed in a closed form using the generalized hypergeometric functions. However our presentation based on direct estimation provides a more flexible approach. 
\end{remark}
The action of anisotropic derivatives on our dyadic decomposition behaves like multipliers.
\begin{lemma}\label{Bernstein3}
Let $s \in \mathbb{R}$, $r \in [1,\infty]$. Then,
\begin{equation*}
    \begin{split}
    \| |\nabla_\alpha|^{s} P_N u \|_{L^r} \simeq N^s \| P_N u \|_{L^r}, \:\| D_i^s P^{(i)}_{N_i} u \|_{L^r} \simeq N_i^s \| P^{(i)}_{N_i} u\|_{L^r},
    \end{split}
\end{equation*}
where the implicit constants are independent of $r,M,N$.
\end{lemma}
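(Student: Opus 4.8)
The plan is to realise $|\nabla_\alpha|^{s}$, restricted to the deformed shell $\{\sqrt{\xi^2+|\eta|^\alpha}\simeq N\}$, as a Fourier multiplier whose convolution kernel has $L^1$-norm comparable to $N^{s}$, and then invoke Young's inequality; this is the exact analogue of the argument behind \cref{Bernstein5}. First I would fix an even $\widetilde\phi\in C_c^\infty$ with $\widetilde\phi\equiv1$ on $[-2,-\tfrac12]\cup[\tfrac12,2]$ and $supp\,\widetilde\phi\subseteq[-4,-\tfrac14]\cup[\tfrac14,4]$, put $\widetilde\phi_N(\xi,\eta)=\widetilde\phi(\sqrt{\xi^2+|\eta|^\alpha}/N)$ and $\widetilde P_N=\mathcal F^{-1}\widetilde\phi_N\mathcal F$. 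Since $\widetilde\phi\equiv1$ on $supp\,\phi$ one has $\widetilde P_NP_N=P_N$, and since $\widehat{|\nabla_\alpha|^{s}P_Nu}=(\xi^2+|\eta|^\alpha)^{s/2}\phi_N\widehat u$ is supported in $supp\,\phi_N$ (in particular away from the origin, so the homogeneous operator acts unambiguously), also $\widetilde P_N|\nabla_\alpha|^{s}P_Nu=|\nabla_\alpha|^{s}P_Nu$ and $(|\nabla_\alpha|^{-s}\widetilde P_N)(|\nabla_\alpha|^{s}P_Nu)=P_Nu$. Consequently both inequalities in the first claim reduce to the single statement that, for each real exponent $\sigma$, the operator $|\nabla_\alpha|^{\sigma}\widetilde P_N$ is bounded on $L^r$ with norm $\lesssim_{\sigma,\alpha}N^{\sigma}$, uniformly in $r\in[1,\infty]$ and in dyadic $N$: one applies this with $\sigma=s$ to $f=P_Nu$ for ``$\lesssim$'', and with $\sigma=-s$ to $f=|\nabla_\alpha|^{s}P_Nu$ for ``$\gtrsim$''.

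Next I would exploit the anisotropic scaling. The symbol of $|\nabla_\alpha|^{\sigma}\widetilde P_N$ is $M_N(\xi,\eta)=(\xi^2+|\eta|^\alpha)^{\sigma/2}\widetilde\phi_N(\xi,\eta)$, and since the dilation $(\xi,\eta)\mapsto(N\xi,N^{2/\alpha}\eta)$ multiplies $\xi^2+|\eta|^\alpha$ by $N^{2}$, one gets $M_N(\xi,\eta)=N^{\sigma}M_1(\xi/N,\eta/N^{2/\alpha})$ with $M_1(\xi,\eta)=(\xi^2+|\eta|^\alpha)^{\sigma/2}\widetilde\phi(\sqrt{\xi^2+|\eta|^\alpha})$. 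Taking inverse Fourier transforms and rescaling as in the computation $\check\phi_N(x,y)=N^{1+2/\alpha}\check\phi_1(Nx,N^{2/\alpha}y)$ from \cref{Bernstein5}, one obtains $\|\mathcal F^{-1}M_N\|_{L^1}=N^{\sigma}\|\mathcal F^{-1}M_1\|_{L^1}$, so Young's inequality reduces everything to $\mathcal F^{-1}M_1\in L^1(\mathbb R^2)$.

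That last integrability is where all the work sits, and it is handled by replaying the proof of \cref{Bernstein5}. Indeed $M_1$ is supported in $\{\sqrt{\xi^2+|\eta|^\alpha}\simeq1\}$, hence stays away from the origin; it is $C^\infty$ in $\xi$ with all $\xi$-derivatives bounded, giving arbitrary polynomial decay of $\mathcal F^{-1}M_1$ in $x$; and by the product rule $\partial_\eta M_1$ equals $|\eta|^{\alpha-1}$ times functions that are smooth and bounded on the shell --- exactly the singular structure of $\partial_\eta\phi_1$ treated there. Hence the same trichotomy applies: when $\alpha>1$, $\partial_\eta M_1$ is uniformly H\"older-$(\alpha-1)$, forcing $|y|^{-\alpha}$ decay; when $\alpha<1$, the fractional Leibniz estimate together with the pointwise fractional-Laplacian bound \cref{fraclaplacian} forces $|y|^{-(1+\alpha-\epsilon)}$ decay; and $\alpha=1$ is the simpler intermediate case with $|y|^{-(2-\epsilon)}$ decay. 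Combining with the $x$-decay and taking the polynomial order large gives $\mathcal F^{-1}M_1\in L^1$. I expect the main (though still routine) obstacle to be verifying that inserting the factor $(\xi^2+|\eta|^\alpha)^{\sigma/2}$, which is smooth and nonvanishing on the shell, genuinely leaves the hypotheses of the \cref{Bernstein5} argument intact --- in particular that its $\eta$-derivatives contribute nothing worse than the $|\eta|^{\alpha-1}$ already present, so that the estimates for $I$ and $II$ in the proof of \cref{Bernstein5} go through with the same choices of exponents.

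For the second equivalence the same strategy works and is strictly easier: the symbol $\phi^{(i)}_{N_i}(\xi,\eta)=\phi(|\xi_i|/N_i)$ is smooth, the relevant multiplier $|\xi_i|^{\sigma}\widetilde\phi(|\xi_i|/N_i)$ depends only on $\xi_i$ and scales isotropically in that variable, so its kernel is Schwartz in $x_i$ (and a $\delta$-type factor in the other variable) with $L^1$-norm $\lesssim N_i^{\sigma}$ by the classical one-dimensional Bernstein bound (see \cite{tao2006nonlinear}); the reductions via a fattened projection $\widetilde P^{(i)}_{N_i}$, using $\widetilde P^{(i)}_{N_i}P^{(i)}_{N_i}=P^{(i)}_{N_i}$ and the Fourier support of $D_i^{s}P^{(i)}_{N_i}u$, proceed exactly as above.
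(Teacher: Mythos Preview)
Your proposal is correct and follows essentially the same route as the paper: introduce a fattened cutoff that equals $1$ on $supp(\phi_N)$, reduce by anisotropic scaling and Young's inequality to showing that the inverse Fourier transform of $(\xi^2+|\eta|^\alpha)^{\sigma/2}$ times the unit-scale cutoff lies in $L^1$, and then appeal to the decay analysis of \cref{Bernstein5}. The only difference is cosmetic: the paper takes the fattened cutoff $\zeta$ to be a \emph{smooth} function of $(\xi,\eta)$ (identically one on $supp(\phi_1)$, compactly supported in a small neighbourhood), whereas you take it of the same non-smooth radial form $\widetilde\phi(\sqrt{\xi^2+|\eta|^\alpha})$ as $\phi_1$ itself. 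Your choice has the minor advantage that $M_1(\xi,\eta)=\rho^{\sigma}\widetilde\phi(\rho)$ with $\rho=\sqrt{\xi^2+|\eta|^\alpha}$ is literally a smooth compactly supported function of $\rho$ supported away from $0$, so \cref{Bernstein5} applies verbatim with $\phi$ replaced by $\rho\mapsto\rho^{\sigma}\widetilde\phi(\rho)$; the paper's product $(\xi^2+|\eta|^\alpha)^{s/2}\zeta$ is not quite of that form, but (as the paper notes) it has the same $\xi$-smoothness and the same $|\eta|^{\alpha-1}$-type $\eta$-singularity, so the decay estimates \cref{decay1,decay2,decay3} still go through. Either way the converse inequality is obtained, as you and the paper both do, by writing $P_Nu=|\nabla_\alpha|^{-s}(|\nabla_\alpha|^{s}P_Nu)$ and applying the forward bound with exponent $-s$.
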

An application of the contraction mapping argument depends on the completeness of $W^{s,r}_\alpha$, which is a consequence of the boundedness of the anisotropic Bessel potential on Lebesgue spaces, similar to the classical case. The proofs of \cref{Bernstein3,BesselPotential} are in the Appendix. 
\begin{lemma}\label{BesselPotential}
For any $s\geq 0$, $r \in [1,\infty]$, $\alpha \in (0,2]$,
\begin{equation}\label{bessel potential}
    \| \langle \nabla_\alpha \rangle^{-s} f \|_{L^r} \leq \| f \|_{L^r}.
\end{equation}
Consequently, $W_\alpha^{s,r}$ is complete.
\end{lemma}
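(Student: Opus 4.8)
The plan is to exhibit $\langle \nabla_\alpha \rangle^{-s}$, for $s>0$, as convolution against a nonnegative $L^1$ kernel of total mass exactly $1$; then \cref{bessel potential} is immediate from Young's convolution inequality, and completeness of $W_\alpha^{s,r}$ follows from that of $L^r$. The case $s=0$ is trivial, $\langle \nabla_\alpha \rangle^{0}$ being the identity.

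First I would subordinate the Bessel-type symbol to the (anisotropic) heat symbol via the Gamma integral: for every $a \geq 1$,
\begin{equation*}
a^{-s/2} = \frac{1}{\Gamma(s/2)} \int_0^\infty t^{\frac{s}{2}-1} e^{-ta}\, dt ,
\end{equation*}
and specialize to $a = 1+\xi^2+|\eta|^\alpha$, using the factorization $e^{-t(1+\xi^2+|\eta|^\alpha)} = e^{-t}\,e^{-t\xi^2}\,e^{-t|\eta|^\alpha}$. Applying $\mathcal{F}^{-1}$ in $(\xi,\eta)$ and interchanging with the $t$-integral represents $\langle\nabla_\alpha\rangle^{-s}f = K_s \ast f$ with
\begin{equation*}
K_s(x,y) = \frac{1}{\Gamma(s/2)} \int_0^\infty t^{\frac{s}{2}-1} e^{-t}\, g_t(x)\, q^\alpha_t(y)\, dt ,
\end{equation*}
where $g_t(x) = (4\pi t)^{-1/2} e^{-x^2/(4t)}$ is the one–dimensional Gaussian and $q^\alpha_t = \mathcal{F}^{-1}_\eta[e^{-t|\eta|^\alpha}]$ is the symmetric $\alpha$-stable density. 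The only one-dimensional inputs needed are: $g_t \geq 0$ with $\int_{\mathbb{R}} g_t(x)\,dx = 1$ (elementary), and $q^\alpha_t \geq 0$ with $\int_{\mathbb{R}} q^\alpha_t(y)\,dy = 1$ for every $\alpha \in (0,2]$ (classical: $e^{-t|\eta|^\alpha}$ is the characteristic function of a symmetric $\alpha$-stable law; $q^2_t = g_t$, and for $\alpha \in (0,2)$ one has Bochner subordination of $e^{-t|\eta|^\alpha}$ to the Gaussian semigroup). Granting these, $K_s \geq 0$, all the above interchanges are justified by Tonelli, and
\begin{equation*}
\|K_s\|_{L^1(\mathbb{R}^2)} = \frac{1}{\Gamma(s/2)} \int_0^\infty t^{\frac{s}{2}-1} e^{-t}\Big(\int_{\mathbb{R}} g_t(x)\,dx\Big)\Big(\int_{\mathbb{R}} q^\alpha_t(y)\,dy\Big) dt = \frac{1}{\Gamma(s/2)} \int_0^\infty t^{\frac{s}{2}-1} e^{-t}\, dt = 1
\end{equation*}
(equivalently, $\widehat{K_s}(0,0)=1$). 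Then $\|K_s \ast f\|_{L^r} \leq \|K_s\|_{L^1}\|f\|_{L^r} = \|f\|_{L^r}$ for all $r \in [1,\infty]$, which is \cref{bessel potential}.

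For completeness, the multipliers $(1+\xi^2+|\eta|^\alpha)^{s/2}$ and $(1+\xi^2+|\eta|^\alpha)^{-s/2}$ are reciprocals, so on $\mathcal{S}'$ the operator $\langle\nabla_\alpha\rangle^{-s}$ inverts $\langle\nabla_\alpha\rangle^{s}$; combined with the bound just proved, this makes $\langle\nabla_\alpha\rangle^{s} \colon W_\alpha^{s,r}\to L^r$ a surjective linear isometry (given $g\in L^r$, the function $K_s\ast g\in L^r$ is a preimage). Hence if $(f_n)$ is Cauchy in $W_\alpha^{s,r}$, then $(\langle\nabla_\alpha\rangle^{s}f_n)$ is Cauchy in $L^r$, converges to some $g\in L^r$, and $f := K_s\ast g$ satisfies $f\in W_\alpha^{s,r}$ with $\|f_n-f\|_{W_\alpha^{s,r}} = \|\langle\nabla_\alpha\rangle^{s}f_n - g\|_{L^r}\to 0$.

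The main obstacle is the input $q^\alpha_t \geq 0$ with unit mass for all $\alpha\in(0,2]$: this is precisely where the L\'evy-index constraint $\alpha\leq 2$ enters, and it must be secured \emph{before} the Tonelli interchange defining $K_s$ is legitimate, so the cleanest route is to first record the positivity of $q^\alpha_t$ (Gaussian for $\alpha=2$, Bochner subordination against a one-sided $(\alpha/2)$-stable subordinator for $\alpha\in(0,2)$) and only then assemble $K_s$ from nonnegative pieces. A minor secondary point, also worth a line, is that $\langle\nabla_\alpha\rangle^{\pm s}$ act on $\mathcal{S}'$ even though $|\eta|^\alpha$ is not smooth at $\eta=0$ when $\alpha\notin 2\mathbb{N}$ — harmless, since the symbols are continuous with polynomial growth and $K_s$ is an honest $L^1$ function paired against $L^r$ data.
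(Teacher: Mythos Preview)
Your proposal is correct and follows essentially the same approach as the paper: subordinate the Bessel-type symbol to the product heat symbol via the Gamma integral, use nonnegativity of the one-dimensional symmetric $\alpha$-stable density to conclude the kernel is nonnegative with unit $L^1$ mass, then apply Young's inequality; completeness is then deduced from that of $L^r$ via the isometry $\langle\nabla_\alpha\rangle^{s}\colon W_\alpha^{s,r}\to L^r$. Your write-up is in fact more careful than the paper's about justifying the interchanges (Tonelli) and about where the constraint $\alpha\le 2$ enters.
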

Linear dispersive behavior of \cref{mixedNLS} is reflected in both fixed-time dispersive estimates and time-averaged Strichartz estimates to which the rest of this section is devoted. 
\begin{definition}\label{functionspace}
Define the pair $(q,r) \in (2,\infty]\times [2,\infty)$ to be \textit{admissible} if $\frac{1}{q}+\frac{1}{r} = \frac{1}{2}$. For $s\in \mathbb{R}$, $I \subseteq \mathbb{R}$ and $\alpha = \frac{2\alpha_2}{\alpha_1}$, let
\begin{equation*}
\begin{split}
    \| f \|_{S^s_{q,r}(I)} &= \| D_1^{-\beta_1(\frac{1}{2}-\frac{1}{r})} D_2^{-\beta_2(\frac{1}{2}-\frac{1}{r})} f \|_{L^q_{t\in I} W_\alpha^{s,r}}\\
    \| f \|_{\Tilde{S}^s_{q,r}(I)} &= (\sum_{N}\|  P_N |\nabla_\alpha|^{-(1+\frac{\alpha_1}{\alpha_2}-\alpha_1)(\frac{1}{2}-\frac{1}{r})} f \|^2_{L^q_{t\in I} W_\alpha^{s,r}})^{1/2},  
\end{split}
\end{equation*}
for smooth $f$ with compact support. Define the Strichartz space $S^s_{q,r}(I),\Tilde{S}^s_{q,r}(I)$ as the closure of functions under the norms above, respectively.
\end{definition}
\begin{proposition}[Fixed Time Estimate]\label{disp est2}
For $\theta \in [0,1]$,
\begin{equation*}
\begin{split}
    \| D_1^{-\beta_1 \theta}D_2^{-\beta_2 \theta} U(t)f \|_{L^{\frac{2}{1-\theta}}} &\lesssim |t|^{-\theta} \| f\|_{L^{\frac{2}{1+\theta}}}.
\end{split}
\end{equation*}
\end{proposition}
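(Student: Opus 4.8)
The plan is to reduce the two-dimensional fixed-time estimate for $U(t) = e^{-it(D_1^{\alpha_1}+D_2^{\alpha_2})}$ to a tensor product of one-dimensional dispersive estimates, since the symbol $|\xi|^{\alpha_1}+|\eta|^{\alpha_2}$ splits and $U(t)$ factors as $e^{-itD_1^{\alpha_1}}\otimes e^{-itD_2^{\alpha_2}}$. So the first step is to establish, for each $i=1,2$ and each $\theta\in[0,1]$, the one-dimensional bound
\begin{equation*}
    \| D_i^{-\beta_i\theta} e^{-itD_i^{\alpha_i}} g \|_{L^{\frac{2}{1-\theta}}_{x_i}} \lesssim |t|^{-\theta/2}\, \| g \|_{L^{\frac{2}{1+\theta}}_{x_i}},
\end{equation*}
which upon taking the tensor product in $(x,y)$ and applying it successively in each variable yields the claimed exponent $|t|^{-\theta}$ on $\mathbb{R}^2$. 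The endpoint $\theta=0$ is just Plancherel (unitarity of $e^{-itD_i^{\alpha_i}}$ on $L^2$), so by the Riesz--Thorin interpolation theorem it suffices to prove the endpoint $\theta=1$, i.e. the $L^1_{x_i}\to L^\infty_{x_i}$ bound $\| D_i^{-\beta_i} e^{-itD_i^{\alpha_i}} g\|_{L^\infty}\lesssim |t|^{-1/2}\|g\|_{L^1}$, which by Young's inequality follows from the kernel estimate
\begin{equation*}
    \Bigl| \int_{\mathbb{R}} e^{ix_i\zeta - it|\zeta|^{\alpha_i}} |\zeta|^{-\beta_i}\, d\zeta \Bigr| \lesssim |t|^{-1/2}, \qquad \beta_i = 1-\tfrac{\alpha_i}{2}.
\end{equation*}

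The second and main step is this oscillatory integral bound. By scaling $\zeta\mapsto |t|^{-1/\alpha_i}\zeta$ one reduces to $t=\pm1$ up to the factor $|t|^{-(1-\beta_i)/\alpha_i} = |t|^{-1/2}$ (using $1-\beta_i=\alpha_i/2$), so it remains to show $\bigl|\int e^{\pm i(x\zeta - |\zeta|^{\alpha_i})}|\zeta|^{-\beta_i}d\zeta\bigr| \lesssim 1$ uniformly in $x\in\mathbb{R}$. This is a standard van der Corput / stationary phase analysis: on the region where $|\zeta|$ is bounded, the weight $|\zeta|^{-\beta_i}$ is integrable near the origin because $\beta_i = 1-\alpha_i/2 < 1$, and away from the origin the phase $\varphi(\zeta)=x\zeta-|\zeta|^{\alpha_i}$ has second derivative $\varphi''(\zeta) = -\alpha_i(\alpha_i-1)|\zeta|^{\alpha_i-2}$ of size comparable to $|\zeta|^{\alpha_i-2}$ (here $\alpha_i\neq1$ is essential so this never vanishes), so van der Corput's lemma with the weight controlled via its total variation gives the bound; for large $|\zeta|$ one integrates by parts, exploiting that $|\varphi'(\zeta)| = \bigl||x| - \alpha_i|\zeta|^{\alpha_i-1}\bigr|\gtrsim |\zeta|^{\alpha_i-1}$ once $|\zeta|\gg |x|^{1/(\alpha_i-1)}$, together with the decay of $|\zeta|^{-\beta_i}$. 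One must treat separately the stationary point $\zeta_0$ solving $|x| = \alpha_i|\zeta_0|^{\alpha_i-1}$ (when it exists), localizing to $|\zeta-\zeta_0|\lesssim \zeta_0^{(2-\alpha_i)/2}$ where the contribution is $O(\zeta_0^{(2-\alpha_i)/2}\cdot\zeta_0^{-\beta_i}) = O(1)$ since $(2-\alpha_i)/2 = \beta_i$, and controlling the complementary region by non-stationary phase.

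The remaining step is bookkeeping: assemble the two one-dimensional $\theta=1$ estimates into the two-dimensional one by Minkowski/Fubini (first apply the $x$-estimate for each fixed $y$, then the $y$-estimate), interpolate against the trivial $\theta=0$ bound using Riesz--Thorin to cover all $\theta\in(0,1)$, and note the exponents match: $\frac{2}{1-\theta}$ and $\frac{2}{1+\theta}$ are dual Lebesgue exponents and the time decay multiplies to $|t|^{-1/2}\cdot|t|^{-1/2} = |t|^{-\theta}$ at $\theta=1$, $|t|^0$ at $\theta=0$, hence $|t|^{-\theta}$ throughout. The main obstacle is genuinely the oscillatory integral in step two: unlike the classical Schr\"odinger case $\alpha_i=2$ where the kernel is an explicit Gaussian, for fractional $\alpha_i$ one must handle both the singular weight $|\zeta|^{-\beta_i}$ at the origin and the degeneracy of $\varphi''$ at infinity, and it is precisely the choice $\beta_i = 1-\alpha_i/2$ that makes the stationary-phase contribution scale-invariant and bounded — getting this matching exponent right (and seeing why $\alpha_i=1$ must be excluded) is the crux.
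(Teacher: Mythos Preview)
Your overall strategy---factor $U(t)$ as a tensor product of one–dimensional propagators, prove the $\theta=1$ endpoint via a van der Corput / stationary phase analysis of the one–dimensional kernel $\int e^{ix\zeta-i|\zeta|^{\alpha_i}}|\zeta|^{-\beta_i}\,d\zeta$, and then interpolate against the trivial $L^2$ bound---is exactly what the paper does. The paper's appendix lemma is precisely this oscillatory integral, and it too exploits the product structure of the kernel to reduce to one dimension. Your sketch of the stationary phase analysis is accurate and captures the crucial cancellation $(2-\alpha_i)/2=\beta_i$.

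There is, however, a genuine gap in the interpolation step. You invoke Riesz--Thorin to pass from the endpoints $\theta=0,1$ to intermediate $\theta$, but Riesz--Thorin interpolates a \emph{single} operator between different Lebesgue spaces. Here the operator $D_i^{-\beta_i\theta}e^{-itD_i^{\alpha_i}}$ itself depends on $\theta$: at $\theta=0$ it is the bare propagator, at $\theta=1$ it carries the Riesz potential $D_i^{-\beta_i}$. These are different operators, so Riesz--Thorin does not apply. The correct tool---and the one the paper uses---is Stein's complex interpolation on the analytic family $T_z=D_i^{-\beta_i z}e^{-itD_i^{\alpha_i}}$ (or its two–dimensional version) on the strip $0\le\Re z\le 1$.

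This has a concrete consequence for your endpoint argument: on the line $\Re z=1$ you must control not only $\int e^{ix\zeta-i|\zeta|^{\alpha_i}}|\zeta|^{-\beta_i}\,d\zeta$, but the full family $\int e^{ix\zeta-i|\zeta|^{\alpha_i}}|\zeta|^{-\beta_i(1+i\mu)}\,d\zeta$ uniformly in $\mu\in\mathbb{R}$ (with admissible polynomial growth in $\mu$). The paper carries this imaginary exponent throughout. Your oscillatory-integral sketch extends to this case with only cosmetic changes, since $\bigl||\zeta|^{-\beta_i(1+i\mu)}\bigr|=|\zeta|^{-\beta_i}$ and differentiating the amplitude produces harmless factors of $(1+|\mu|)$, but the point must be addressed for the interpolation to go through.
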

\begin{proof}[Proof of \cref{disp est2}]
Due to scaling, it suffices to prove the estimate at $t=1$. Use the complex interpolation method on the analytic family of linear operators $T_z = D_1^{-\beta_1 z}D_2^{-\beta_2 z} U(1)$ in the strip $\{z \in \mathbb{C}: \Re(z) \in [0,1]\}$ to obtain the desired estimate.\footnote{Showing the hypotheses of Stein's Interpolation Theorem has to be dealt with some care due to the pole of the symbol of $D_1^{-\beta_1 z}D_2^{-\beta_2 z}$ at the origin, but all operations are justified due to $\beta < 1$.} Let $\mu \in \mathbb{R}$. For $z = i\mu$, $T_z$ is bounded on $L^2$ by the Plancherel Theorem. For $z=1+i\mu$, $T_z:L^1\rightarrow L^\infty$ is bounded by \cref{disp est3}. 
\end{proof}
Averaging the estimate of \cref{disp est2} over time, the Strichartz estimates are obtained by the standard $TT^*$ argument.
\begin{proposition}[Strichartz Estimate]\label{strichartz4}
Let $(q,r),(\Tilde{q},\Tilde{r})$ be admissible. Then,
\begin{equation}\label{Strichartz1}
\begin{split}
    \| D_1^{-\beta_1(\frac{1}{2}-\frac{1}{r})} D_2^{-\beta_2(\frac{1}{2}-\frac{1}{r})}U(t)f\|_{L^q_t L^r(\mathbb{R}\times \mathbb{R}^2)} &\lesssim \| f\|_{L^2}\\
    \| \int U(t-\tau)F(\tau)d\tau \|_{L^q_t L^r} &\lesssim \| D_1^{\beta_1(1-\frac{1}{r}-\frac{1}{\Tilde{r}})} D_2^{\beta_2(1-\frac{1}{r}-\frac{1}{\Tilde{r}})}F \|_{L^{\Tilde{q}^\prime}_t L^{\Tilde{r}^\prime}}.
\end{split}
\end{equation}
\begin{equation}\label{Strichartz2}
    \begin{split}
        \| \int_{0}^{t} U(t-\tau)F(\tau)d\tau \|_{L^q_t L^r} \lesssim \| D_1^{\beta_1(1-\frac{1}{r}-\frac{1}{\Tilde{r}})} D_2^{\beta_2(1-\frac{1}{r}-\frac{1}{\Tilde{r}})}F \|_{L^{\Tilde{q}^\prime}_t L^{\Tilde{r}^\prime}}.
    \end{split}
\end{equation}
\end{proposition}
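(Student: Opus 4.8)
The plan is to deduce both \eqref{Strichartz1} and \eqref{Strichartz2} from the fixed-time bound (\cref{disp est2}) by the classical $TT^{*}$ and Christ--Kiselev machinery, after first reducing to the dense subclass of smooth compactly supported functions on which the Strichartz norms are defined. Throughout I would use that $D_{1},D_{2}$ and $U(t)$ are commuting Fourier multipliers and that each $D_{i}$ is self-adjoint, so that weights may be freely moved past $U(t)$ and across inner products; the mild singularity of the symbols $|\xi_{i}|^{-\beta_{i}(\cdot)}$ at the origin is harmless exactly as noted in the footnote to \cref{disp est2}, since $\beta_{i}<1$.

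First I would fix an admissible pair $(q,r)$, set $\theta := 1-\tfrac{2}{r}\in[0,1)$, and consider $Tf := D_{1}^{-\beta_{1}(\frac12-\frac1r)} D_{2}^{-\beta_{2}(\frac12-\frac1r)} U(t) f$, so that the first line of \eqref{Strichartz1} is precisely $\|T\|_{L^{2}\to L^{q}_{t}L^{r}_{x}}\lesssim 1$. The corner $\theta=0$ ($r=2,\:q=\infty$) is immediate from Plancherel and unitarity of $U(t)$, so assume $\theta\in(0,1)$. Computing the adjoint gives $T^{*}G = \int U(-\tau)\,D_{1}^{-\beta_{1}(\frac12-\frac1r)}D_{2}^{-\beta_{2}(\frac12-\frac1r)}G(\tau)\,d\tau$, hence $TT^{*}G(t) = \int U(t-\tau)\,D_{1}^{-\beta_{1}\theta}D_{2}^{-\beta_{2}\theta}G(\tau)\,d\tau$. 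Applying \cref{disp est2} with this $\theta$ yields the pointwise-in-$t$ estimate $\|TT^{*}G(t)\|_{L^{r}_{x}}\lesssim \int |t-\tau|^{-\theta}\,\|G(\tau)\|_{L^{r'}_{x}}\,d\tau$, and the Hardy--Littlewood--Sobolev inequality in $t$ — whose exponent relation $1+\tfrac1q=\theta+\tfrac1{q'}$ is equivalent to the admissibility condition $\tfrac1q+\tfrac1r=\tfrac12$ — gives $\|TT^{*}G\|_{L^{q}_{t}L^{r}_{x}}\lesssim \|G\|_{L^{q'}_{t}L^{r'}_{x}}$. By the $TT^{*}$ identity $\|T\|^{2}=\|T^{*}\|^{2}=\|TT^{*}\|$ this proves the homogeneous estimate.

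For the inhomogeneous estimate over $\mathbb{R}$ (second line of \eqref{Strichartz1}), I would introduce $\tilde T f := D_{1}^{-\beta_{1}(\frac12-\frac1{\tilde r})} D_{2}^{-\beta_{2}(\frac12-\frac1{\tilde r})} U(t) f$ for the second admissible pair, so that $T\tilde T^{*}G(t) = \int U(t-\tau)\,D_{1}^{-\beta_{1}(1-\frac1r-\frac1{\tilde r})}D_{2}^{-\beta_{2}(1-\frac1r-\frac1{\tilde r})}G(\tau)\,d\tau$. Choosing $G = D_{1}^{\beta_{1}(1-\frac1r-\frac1{\tilde r})}D_{2}^{\beta_{2}(1-\frac1r-\frac1{\tilde r})}F$ (the exponent $1-\tfrac1r-\tfrac1{\tilde r}$ lies in $[0,1)$ since $r,\tilde r\ge 2$) turns the left-hand side into $\int U(t-\tau)F(\tau)\,d\tau$, and the bound follows from $\|T\tilde T^{*}\|_{L^{\tilde q'}_{t}L^{\tilde r'}_{x}\to L^{q}_{t}L^{r}_{x}}\le \|T\|\,\|\tilde T^{*}\|\lesssim 1$, with $\|T\|$ and $\|\tilde T^{*}\|=\|\tilde T\|$ supplied by the homogeneous estimate just proved. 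Finally, \eqref{Strichartz2} follows from \eqref{Strichartz1} by the Christ--Kiselev lemma, which applies because $\tilde q'<2<q$, the forbidden double endpoint $q=\tilde q=2$ being excluded from the admissible range; one writes $\int_{0}^{t}U(t-\tau)F(\tau)d\tau = \int_{\tau<t}U(t-\tau)\big(\mathbf{1}_{\tau>0}F(\tau)\big)d\tau$ for $t\ge0$ and applies the lemma to $\mathbf{1}_{\tau>0}F$.

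The argument is essentially routine, so I do not expect a genuinely hard step; the only points needing care are (i) the density/closure reductions forced by the definition of $S^{s}_{q,r}$ and $\tilde S^{s}_{q,r}$ together with the self-adjointness bookkeeping for the singular-symbol operators $D_{i}^{-\beta_{i}(\cdot)}$, and (ii) checking that the HLS exponent condition collapses exactly onto the admissibility line and that the $\theta=0$ corner is handled separately. If any real difficulty surfaces it will be in verifying that all weighted operators act boundedly on the relevant closures, so that the formal $TT^{*}$ identities are rigorously justified.
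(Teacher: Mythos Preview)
Your proposal is correct and follows essentially the same route as the paper: the $TT^{*}$ method applied to $Tf=D_{1}^{-\beta_{1}\theta/2}D_{2}^{-\beta_{2}\theta/2}U(t)f$ (your $\theta=1-\tfrac{2}{r}$ coincides with the paper's), combined with \cref{disp est2} and Hardy--Littlewood--Sobolev, then Christ--Kiselev for \eqref{Strichartz2}. Your treatment is in fact slightly more complete, since you spell out the $T\tilde T^{*}$ factorization needed for the mixed-pair inhomogeneous estimate in \eqref{Strichartz1}, whereas the paper only writes the diagonal case $(q,r)=(\tilde q,\tilde r)$ explicitly.
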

\begin{proof}
We use the $TT^*$ method on $T: L^2\rightarrow L^q_tL^r(\mathbb{R}\times \mathbb{R}^2)$ where $Tf := D_1^{-\frac{\beta_1\theta}{2}}D_2^{-\frac{\beta_2\theta}{2}} U(t)f$. Let $\theta \in [0,1)$ such that $(q,r)  = \Big(\frac{2}{\theta},\frac{2}{1-\theta} \Big)$. Given a spacetime function $F$, our task reduces to showing $\| TT^* F \|_{L^q_t L^r} \lesssim \| F \|_{L^{q^\prime}_t L^{r^\prime}}$. By the triangle inequality, \cref{disp est2}, and the Hardy-Littlewood-Sobolev inequality, 
\begin{equation*}
    \begin{split}
        \left|\left| TT^* F \right|\right|_{L^q_t L^r} &= \left|\left| \int D_1^{-\beta_1 \theta} D_2^{-\beta_2 \theta} U(t-\tau) F(\tau)d\tau\right|\right|_{L^q_t L^r}\leq \left|\left| \int \|D_1^{-\beta_1 \theta} D_2^{-\beta_2 \theta}U(t-\tau)F(\tau) \|_{L^r}d\tau\right|\right|_{L^q_t}\\
        &\lesssim \left|\left| \int |t-\tau|^{-\theta} \|F(\tau) \|_{L^{r^\prime}}d\tau \right|\right|_{L^q_t} \lesssim \| F \|_{L^{q^\prime}_t L^{r^\prime}}.
    \end{split}
\end{equation*}
This shows \cref{Strichartz1}. By the standard argument by the Christ-Kiselev Lemma \cite{christ2001maximal}, \cref{Strichartz2} follows.
\end{proof}
\begin{remark}
For $\theta = 1$, we have $(q,r) = (2,\infty)$, which constitutes the endpoint case in the application of the Hardy-Littlewood-Sobolev inequality.  The classical Strichartz estimate fails for this $(q,r)$ in $d=2$; see \cite{montgomery1998time}. It is of interest to investigate whether the analogue of the negative result for the classical Schr\"odinger evolution extends to our case.
\end{remark}
\begin{corollary}\label{disp est4}
For $s\in \mathbb{R}$, $\alpha = \frac{2\alpha_2}{\alpha_1}$, and admissible $(q,r)$,
\begin{equation}
\begin{split}
    \| U(t)f \|_{S^s_{q,r}(I)} ,\:\| U(t)f \|_{\Tilde{S}^s_{q,r}(I)}&\lesssim \| f \|_{H^s_\alpha}\\
    \| \int_0^t U(t-\tau)F(\tau)d\tau \|_{S^s_{q,r}(I)},\: \| \int_0^t U(t-\tau)F(\tau)d\tau \|_{\Tilde{S}^s_{q,r}(I)}&\lesssim \| F \|_{L^1_{t\in I}H^s_\alpha}
\end{split}
\end{equation}
\end{corollary}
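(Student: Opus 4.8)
The plan is to derive all four bounds from \cref{strichartz4} by commuting Fourier multipliers; the only genuinely new point is the passage, inside the $\tilde S^s_{q,r}$ norm, from the mixed derivative loss $D_1^{-\beta_1(\frac12-\frac1r)}D_2^{-\beta_2(\frac12-\frac1r)}$ to the single anisotropic loss $|\nabla_\alpha|^{-\kappa(\frac12-\frac1r)}$ with $\kappa:=1+\tfrac{\alpha_1}{\alpha_2}-\alpha_1$. Throughout, one first establishes the estimates for Schwartz data and then extends them to $S^s_{q,r}(I)$ and $\tilde S^s_{q,r}(I)$ by density, these spaces being defined as such closures.

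For the homogeneous $S^s_{q,r}$ bound, note that $\langle\nabla_\alpha\rangle^s$, $D_1^{-\beta_1(\frac12-\frac1r)}$, $D_2^{-\beta_2(\frac12-\frac1r)}$ and $U(t)$ are Fourier multipliers and hence commute, so $\|U(t)f\|_{S^s_{q,r}(I)}$ is the left side of \eqref{Strichartz1} evaluated at $\langle\nabla_\alpha\rangle^s f$, which \cref{strichartz4} bounds by $\|\langle\nabla_\alpha\rangle^s f\|_{L^2}=\|f\|_{H^s_\alpha}$. For the Duhamel term, the same commutation pulls $D_1^{-\beta_1(\frac12-\frac1r)}D_2^{-\beta_2(\frac12-\frac1r)}\langle\nabla_\alpha\rangle^s$ inside the time integral, reducing $\|\int_0^t U(t-\tau)F(\tau)\,d\tau\|_{S^s_{q,r}(I)}$ to the left side of \eqref{Strichartz2} applied to $D_1^{-\beta_1(\frac12-\frac1r)}D_2^{-\beta_2(\frac12-\frac1r)}\langle\nabla_\alpha\rangle^s F$. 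Here I would choose the admissible pair $(\tilde q,\tilde r)=(\infty,2)$, so that $1-\tfrac1r-\tfrac1{\tilde r}=\tfrac12-\tfrac1r$ and the right side of \eqref{Strichartz2} carries exactly the factor $D_1^{\beta_1(\frac12-\frac1r)}D_2^{\beta_2(\frac12-\frac1r)}$, which cancels the built-in one and leaves $\|\langle\nabla_\alpha\rangle^s F\|_{L^1_t L^2}=\|F\|_{L^1_t H^s_\alpha}$. (That $D_i^{\beta_i(\frac12-\frac1r)}D_i^{-\beta_i(\frac12-\frac1r)}$ acts as the identity despite the pole of the symbol at $\xi_i=0$ is harmless since $\beta_i(\tfrac12-\tfrac1r)<1$, exactly as in the footnote to \cref{disp est2}.)

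For the $\tilde S^s_{q,r}$ bounds, set $\gamma:=\tfrac12-\tfrac1r$ and, for each dyadic $N$, factor
\[
P_N\,|\nabla_\alpha|^{-\kappa\gamma}=D_1^{-\beta_1\gamma}D_2^{-\beta_2\gamma}\,m_N(\nabla),\qquad m_N(\xi,\eta):=\frac{|\xi|^{\beta_1\gamma}\,|\eta|^{\beta_2\gamma}}{(\xi^2+|\eta|^\alpha)^{\kappa\gamma/2}}\,\phi_N(\xi,\eta).
\]
Since $\kappa=\beta_1+\tfrac2\alpha\beta_2$, the symbol $m_N$ is homogeneous of degree $0$ under the anisotropic dilation $(\xi,\eta)\mapsto(N\xi,N^{2/\alpha}\eta)$, so $m_N(\xi,\eta)=m_1(\xi/N,\eta/N^{2/\alpha})$; and on $\mathrm{supp}\,\phi_N$ one has $\xi^2+|\eta|^\alpha\simeq N^2$, $|\xi|\lesssim N$, $|\eta|\lesssim N^{2/\alpha}$, whence $\|m_N\|_{L^\infty}\lesssim1$ uniformly in $N$, with $m_N$ supported in the shell $\{\tfrac N2\le\sqrt{\xi^2+|\eta|^\alpha}\le2N\}$. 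Commuting multipliers and applying \eqref{Strichartz1} — respectively \eqref{Strichartz2} with $(\tilde q,\tilde r)=(\infty,2)$ as above — to $D_1^{-\beta_1\gamma}D_2^{-\beta_2\gamma}U(t)$ acting on $m_N(\nabla)\langle\nabla_\alpha\rangle^s f$ (resp.\ on $m_N(\nabla)\langle\nabla_\alpha\rangle^s F(\tau)$) bounds the $N$-th summand of the $\tilde S^s$ square-sum by $\|m_N(\nabla)\langle\nabla_\alpha\rangle^s f\|_{L^2}^2$ (resp.\ by $\|m_N(\nabla)\langle\nabla_\alpha\rangle^s F\|_{L^1_t L^2}^2$). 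Summing over $N$, I would use the finite-overlap bound $\sum_N|m_N|^2\lesssim1$ together with the Plancherel theorem to obtain $\sum_N\|m_N(\nabla)\langle\nabla_\alpha\rangle^s f\|_{L^2}^2\lesssim\|\langle\nabla_\alpha\rangle^s f\|_{L^2}^2=\|f\|_{H^s_\alpha}^2$; in the Duhamel case, first apply Minkowski's integral inequality to move $\ell^2_N$ inside the $\tau$-integral and then the same Plancherel bound at each fixed $\tau$, yielding $\|F\|_{L^1_t H^s_\alpha}$.

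I expect no serious obstacle beyond the bookkeeping in the $\tilde S^s_{q,r}$ step: one must verify that $m_N$ is a uniformly (in $N$) bounded multiplier supported in the $N$-th frequency shell — this is exactly what forces the exponent $\kappa=1+\tfrac{\alpha_1}{\alpha_2}-\alpha_1$ in the definition of $\tilde S^s_{q,r}$ — and one must keep track of the poles of $D_i^{-\beta_i\gamma}$ on the coordinate axes, where $\phi_N$ need not vanish, which is permissible since $\beta_i\gamma<1$, in the spirit of \cref{Bernstein5} and \cref{disp est2}. Note that only $L^2$-boundedness of $m_N$ is needed, because after invoking \cref{strichartz4} the estimate has been transferred entirely to the $L^2$ side; no $L^r$-mapping properties of $P_N$-type operators beyond \cref{Bernstein5} are required.
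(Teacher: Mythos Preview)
Your proof is correct. For the $S^s_{q,r}$ bounds you proceed exactly as the paper does: commute the Fourier multipliers $\langle\nabla_\alpha\rangle^s$, $D_i^{-\beta_i(\frac12-\frac1r)}$ through $U(t)$ and invoke \cref{strichartz4} (with $(\tilde q,\tilde r)=(\infty,2)$ in the Duhamel case).

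For the $\tilde S^s_{q,r}$ bounds, however, you take a genuinely different route from the paper. You recycle \cref{strichartz4} at each dyadic scale by writing $P_N|\nabla_\alpha|^{-\kappa\gamma}=D_1^{-\beta_1\gamma}D_2^{-\beta_2\gamma}m_N(\nabla)$, where the homogeneity calculation $\kappa=\beta_1+\tfrac{2}{\alpha}\beta_2$ makes $m_N$ a uniformly bounded multiplier supported in the $N$-th shell; the finite overlap $\sum_N|m_N|^2\lesssim 1$ together with Plancherel then closes the square sum on the $L^2$ side. The paper instead proves a \emph{new} frequency-localized dispersive estimate from scratch: it bounds the oscillatory integral $\iint e^{-it(|\xi|^{\alpha_1}+|\eta|^{\alpha_2})+i(x\xi+y\eta)}\phi_N(\xi,\eta)\,d\xi\,d\eta$ by $|t|^{-1}N^{\kappa}$ via two applications of the Van der Corput lemma (one in $\xi$, one in $\eta$, the latter using the explicit bound $|\partial_\eta(\phi'/\sqrt{\xi^2+|\eta|^\alpha})|\lesssim|\eta|^{\alpha-1}$), and then feeds this into the abstract Keel--Tao machinery at each scale. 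Your argument is shorter and avoids re-deriving any dispersive decay; the paper's argument costs more but delivers the frequency-localized Strichartz estimates \eqref{kt} as an intermediate product, which may be of independent use.
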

\begin{proof}
From \cref{strichartz4}, replace $f,F$ by $\langle \nabla_\alpha \rangle^s f,\langle \nabla_\alpha \rangle^s F$ to obtain the estimates in $S^s_{q,r}(I)$. Further replacing $f,F$ by $\langle \nabla_\alpha \rangle^s P_N f, \langle \nabla_\alpha \rangle^s P_N F$ and summing over $N \in 2^\mathbb{Z}$, the estimates in $\Tilde{S}^s_{q,r}(I)$ are obtained, which we show in detail for the readers' convenience.  
It is shown that
\begin{equation}\label{disp est5}
    \sup_{x,y}\left|\iint e^{-it(|\xi|^{\alpha_1}+|\eta|^{\alpha_2}) + i(x\xi+y\eta)}\phi\Big(\frac{\sqrt{\xi^2+|\eta|^\alpha}}{N}\Big) d\xi d\eta\right| := \sup_{x,y}|I|\lesssim_{\alpha_1,\alpha_2} |t|^{-1} N^{1+\frac{\alpha_1}{\alpha_2}-\alpha_1}.
\end{equation}
Indeed by scaling, it suffices to show \cref{disp est5} for $N=1$. Let 
\begin{equation*}
\Psi(\xi,y,t) = \int e^{-it|\eta|^{\alpha_2}+iy\eta}\phi\Big(\sqrt{\xi^2+|\eta|^\alpha}\Big)d\eta = \int_{-c}^c e^{-it|\eta|^{\alpha_2}+iy\eta}\phi\Big(\sqrt{\xi^2+|\eta|^\alpha}\Big)d\eta,
\end{equation*}
where $c=4^{\frac{1}{\alpha}}$. By the support condition of $\phi$, $\Psi=0$ for $|\xi|>2$. The Van der Corput Lemma \cite[Eq 6, p.334]{stein1993harmonic} is applied to estimate the integral in \cref{disp est5}. More precisely, the phase function of interest is $\xi \mapsto -|\xi|^{\alpha_1} + \frac{x \xi}{t}$. Noting that $|\partial_{\xi}^2 (-|\xi|^{\alpha_1} + \frac{x \xi}{t})| \gtrsim 1$ on $\xi \in [-c,c]$,
\begin{equation}\label{eta}
\begin{split}
    \sup_x |I| &\lesssim |t|^{-\frac{1}{2}}\int_{-c}^c |\partial_\xi \Psi(\xi,y,t)| d\xi\\
    &=|t|^{-\frac{1}{2}} \int_{-c}^c |\xi| \left|\int_{-c}^c e^{-it|\eta|^{\alpha_2}+iy\eta}\frac{\phi^\prime (\sqrt{\xi^2+|\eta|^\alpha})}{\sqrt{\xi^2+|\eta|^\alpha}}d\eta\right| d\xi
\end{split}
\end{equation}
A similar argument applies to the $\eta$-integral in \cref{eta}. By the support condition of $\phi$, we have $\xi^2 + |\eta|^\alpha \simeq 1$, and by direct computation,
\begin{equation*}
    \left| \partial_\eta \frac{\phi^\prime (\sqrt{\xi^2+|\eta|^\alpha})}{\sqrt{\xi^2+|\eta|^\alpha}}\right| \lesssim |\eta|^{\alpha-1},
\end{equation*}
uniformly in $\xi$. Another application of the Van der Corput Lemma on the $\eta$-integral then yields the claim.

By the Young's inequality, the frequency-localized dispersive estimate
\begin{equation*}
    \| U(t) P_N f \|_{L^\infty} \lesssim |t|^{-1} N^{1+\frac{\alpha_1}{\alpha_2}-\alpha_1} \| P_N f \|_{L^1}
\end{equation*}
holds. Define $\Tilde{U}(t) = P_N U(N^{1+\frac{\alpha_1}{\alpha_2}-\alpha_1} t)P_{\sim N}$. Then,
\begin{equation}\label{refinement}
    \| \Tilde{U}(t)\Tilde{U}^\ast (\tau) f \|_{L^\infty} \lesssim |t-\tau|^{-1} \| f \|_{L^1},\: \| \Tilde{U}(t) f \|_{L^2} \lesssim \| f \|_{L^2}.
\end{equation}
By applying \cite[Theorem 1.2]{keel1998endpoint} to \cref{refinement}, we obtain the frequency-localized Strichartz estimates corresponding to $\Tilde{U}(t)$. Changing variables $t \mapsto N^{-({1+\frac{\alpha_1}{\alpha_2}-\alpha_1})}t$, we obtain
\begin{equation}\label{kt}
\begin{split}
    \| P_N U(t) f \|_{L^q_t L^r} &\lesssim N^{(1+\frac{\alpha_1}{\alpha_2}-\alpha_1) (\frac{1}{2}-\frac{1}{r})} \| P_N f \|_{L^2}\\
    \| \int_0^t P_N U(t-\tau)F(\tau)d\tau \|_{L^q_t L^r} &\lesssim N^{(1+\frac{\alpha_1}{\alpha_2}-\alpha_1)(1-\frac{1}{r}-\frac{1}{\Tilde{r}})}\| P_N F \|_{L^{\Tilde{q}^\prime}_{t}L^{\Tilde{r}^\prime}}.
\end{split}
\end{equation}
Observing that $\sum\limits_{N \in 2^\mathbb{Z}}N^{2(1+\frac{\alpha_1}{\alpha_2}-\alpha_1) (\frac{1}{2}-\frac{1}{r})} \| P_N f \|_{L^2}^2 \simeq \| |\nabla_\alpha|^{(1+\frac{\alpha_1}{\alpha_2}-\alpha_1)(\frac{1}{2}-\frac{1}{r})}f \|_{L^2}^2$ by the Plancherel Theorem, the refined estimates in $\Tilde{S}^s_{q,r}(I)$ follow by squaring \cref{kt} and summing in $N \in 2^\mathbb{Z}$.
\end{proof}
\section{Nonlinear Estimates.}\label{wp}
This section is devoted to the estimates needed to control the nonlinearity to close the contraction mapping argument. The goal is to obtain a solution $u \in C_T H^s_\alpha :=C_t H^s_\alpha ([0,T]\times \mathbb{R}^2)$ that satisfies the integral representation of \cref{mixedNLS}, which motivates the construction of
\begin{equation}\label{functionspace2}
X^s\coloneqq L^\infty_{t\in I}H_\alpha^{s}\cap S^{s}_{q,r}(I),\: \Tilde{X}^s\coloneqq \Tilde{S}^{s}_{\infty,2}(I)\cap \Tilde{S}^{s}_{q,r}(I).   
\end{equation}
In the scaling-subcritical regime, the $\| \cdot \|_{L^{p-1}_{t\in I}L^\infty}$ norm is controlled by the Sobolev embedding.
\begin{lemma}\label{sobolev embedding2}
For $2\leq r<q\leq \infty$, $s>(1+\frac{\alpha_1}{\alpha_2})(\frac{1}{2} - \frac{1}{q}) - \alpha_1(\frac{1}{2}-\frac{1}{r})$, and $\alpha = \frac{2\alpha_2}{\alpha_1}$,
\begin{equation}\label{sobemb}
    \| u \|_{L^q}  \lesssim \| D_1^{-\beta_1(\frac{1}{2}-\frac{1}{r})} D_2^{-\beta_2(\frac{1}{2}-\frac{1}{r})}u \|_{W^{s,r}_\alpha}.
\end{equation}
\end{lemma}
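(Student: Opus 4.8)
The plan is to run an anisotropic Littlewood--Paley argument: decompose $u=\sum_{N\in 2^{\mathbb Z}}P_Nu$, estimate each dyadic block in $L^q$ by a power of $N$ times the right-hand side of \cref{sobemb}, and then sum two geometric series whose convergence is governed precisely by the hypothesis on $s$. Throughout I write $\theta=\tfrac12-\tfrac1r\in[0,\tfrac12)$ and $a=1+\tfrac{\alpha_1}{\alpha_2}=1+\tfrac2\alpha$, and I set $v=D_1^{-\beta_1\theta}D_2^{-\beta_2\theta}u$, so that the right-hand side of \cref{sobemb} equals $\|v\|_{W^{s,r}_\alpha}$ and $P_Nu=D_1^{\beta_1\theta}D_2^{\beta_2\theta}P_Nv$. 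By density it suffices to treat Schwartz $u$, and by Minkowski's inequality $\|u\|_{L^q}\le\sum_N\|P_Nu\|_{L^q}$.

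First I would bound $\|P_Nu\|_{L^q}$. Bernstein's inequality (\cref{Bernstein}) gives $\|P_Nu\|_{L^q}\lesssim N^{a(\frac1r-\frac1q)}\|P_Nu\|_{L^r}$. Next, since $\mathrm{supp}\,\phi_N\subseteq\{|\xi|\lesssim N\}\times\{|\eta|\lesssim N^{2/\alpha}\}$, the function $P_Nv$ is unchanged by suitable smooth one-dimensional cutoffs, $P_Nv=P^{(1)}_{\le CN}P^{(2)}_{\le CN^{2/\alpha}}P_Nv$; because $\beta_i\theta\in[0,\tfrac12)$, the operator $D_i^{\beta_i\theta}$ composed with such a cutoff is, after rescaling in the $i$-th variable, a fixed Fourier multiplier with integrable kernel (the one-dimensional symbol $|\zeta|^{\beta_i\theta}$ against a smooth compactly supported cutoff has an $L^1$ inverse transform precisely because $0\le\beta_i\theta<1$), so Young's inequality yields $\|D_1^{\beta_1\theta}D_2^{\beta_2\theta}P_Nv\|_{L^r}\lesssim N^{\beta_1\theta}(N^{2/\alpha})^{\beta_2\theta}\|P_Nv\|_{L^r}$. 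Using the identity $\beta_1+\tfrac2\alpha\beta_2=\beta_1+\tfrac{\alpha_1}{\alpha_2}\beta_2=1+\tfrac{\alpha_1}{\alpha_2}-\alpha_1=a-\alpha_1$, this is $N^{(a-\alpha_1)\theta}\|P_Nv\|_{L^r}$, and a routine computation of exponents gives $a(\tfrac1r-\tfrac1q)+(a-\alpha_1)\theta=b$, where $b:=a(\tfrac12-\tfrac1q)-\alpha_1(\tfrac12-\tfrac1r)$ is exactly the threshold in the hypothesis. Hence $\|P_Nu\|_{L^q}\lesssim N^{b}\|P_Nv\|_{L^r}$.

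It remains to recover $\langle\nabla_\alpha\rangle^s$ and sum. For $N\le1$, since $s\ge0$ I would simply use $\|P_Nv\|_{L^r}\lesssim\|v\|_{L^r}\lesssim\|v\|_{W^{s,r}_\alpha}$ (\cref{Bernstein5} and \cref{BesselPotential}), so $\|P_Nu\|_{L^q}\lesssim N^{b}\|v\|_{W^{s,r}_\alpha}$. For $N\ge1$, write $P_Nv=P_N\langle\nabla_\alpha\rangle^{-s}\langle\nabla_\alpha\rangle^sv$; on $\mathrm{supp}\,\phi_N$ one has $1+\xi^2+|\eta|^\alpha\simeq N^2$, so the symbol $\phi_N(1+\xi^2+|\eta|^\alpha)^{-s/2}$ equals $N^{-s}$ times a rescaled multiplier whose kernel is in $L^1$ uniformly in $N\ge1$ (the extra factor is smooth in $\xi$ and no rougher than $|\eta|^\alpha$ in $\eta$, so the decay estimates in the proof of \cref{Bernstein5} apply verbatim; alternatively one invokes \cref{Bernstein3} with exponent $-s$ together with \cref{BesselPotential}), whence $\|P_Nv\|_{L^r}\lesssim N^{-s}\|v\|_{W^{s,r}_\alpha}$ and $\|P_Nu\|_{L^q}\lesssim N^{b-s}\|v\|_{W^{s,r}_\alpha}$. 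Summing over dyadic $N$,
\[
\|u\|_{L^q}\lesssim\|v\|_{W^{s,r}_\alpha}\Big(\sum_{N\le1}N^{b}+\sum_{N\ge1}N^{b-s}\Big),
\]
and both series converge: the second because $b-s<0$ by hypothesis, the first because $b>0$. The positivity of $b$ is where the structural assumptions enter: $r<q$ forces $\tfrac12-\tfrac1q>\tfrac12-\tfrac1r\ge0$, while $\alpha_1\ge\alpha_2$ and $\alpha_1\le2$ give $\alpha_1\le2\le a$, so $b\ge\alpha_1(\tfrac12-\tfrac1q)-\alpha_1(\tfrac12-\tfrac1r)=\alpha_1(\tfrac1r-\tfrac1q)>0$. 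The general $u$ then follows by density.

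The step I expect to be the main obstacle is the uniform-in-$N$ $L^r$-boundedness of the non-smooth Fourier multipliers involved --- both the localized operators $D_i^{\beta_i\theta}$ and the Bessel-type symbol $\phi_N(1+\xi^2+|\eta|^\alpha)^{-s/2}$. As in \cref{Bernstein5}, for $\alpha\le1$ the symbols fail to be Lipschitz in $\eta$, so the Mikhlin multiplier theorem does not apply directly and one must fall back on the Hölder-continuity/fractional-Leibniz kernel bounds established there; the saving grace is that the singular factors $|\xi|^{\beta_1\theta}|\eta|^{\beta_2\theta}$ carry exponents strictly below $\tfrac12$, so the kernels remain integrable. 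Everything else is bookkeeping of the exponents, which collapses cleanly to the single quantity $b$.
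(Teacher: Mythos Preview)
Your argument is correct and follows the same overall strategy as the paper---decompose, apply Bernstein, extract the exponent $b=(1+\tfrac{\alpha_1}{\alpha_2})(\tfrac12-\tfrac1q)-\alpha_1(\tfrac12-\tfrac1r)$, and sum two geometric series using $b>0$ and $s>b$---but the bookkeeping differs in one respect worth noting. The paper uses a \emph{triple} decomposition $u=\sum_{N,N_1,N_2}P_NP^{(1)}_{N_1}P^{(2)}_{N_2}u$, which lets it invoke \cref{Bernstein3} directly for the smooth one-dimensional projections to peel off the $D_i^{\beta_i\theta}$ factors as $N_i^{\beta_i\theta}$; the extra sums in $N_1\lesssim N$ and $N_2\lesssim N^{2/\alpha}$ are then collapsed to recover $N^{(a-\alpha_1)\theta}$. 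You instead keep a single sum in $N$ and handle $D_i^{\beta_i\theta}$ by inserting smooth cutoffs $P^{(i)}_{\le CN^{\cdot}}$ and arguing that the rescaled one-dimensional multiplier $|\zeta|^{\beta_i\theta}\chi(\zeta)$ has integrable kernel (true since $\beta_i\theta<\tfrac12$). Your route is slightly more economical; the paper's route avoids re-deriving the kernel bound by reusing \cref{Bernstein3}. For the high-frequency step $\|P_Nv\|_{L^r}\lesssim N^{-s}\|v\|_{W^{s,r}_\alpha}$, the cleanest justification is exactly the combination you mention---\cref{Bernstein3} with exponent $-s$ together with the bound $\|\,|\nabla_\alpha|^s f\|_{L^r}\lesssim\|\langle\nabla_\alpha\rangle^s f\|_{L^r}$, which the paper establishes via a finite-measure argument (their inequality \eqref{derivative}); \cref{BesselPotential} alone is not quite enough here. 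Finally, your verification that $b>0$ from $a\ge2\ge\alpha_1$ (using the standing assumption $\alpha_1\ge\alpha_2$) is more explicit than the paper's ``by direct computation''.
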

\begin{proof}
We first show 
\begin{equation}\label{sobolevembedding}
\| u \|_{L^q} \lesssim \| D_1^{-\beta_1(\frac{1}{2}-\frac{1}{r})} D_2^{-\beta_2(\frac{1}{2}-\frac{1}{r})}u \|_{L^r} + \| D_1^{-\beta_1(\frac{1}{2}-\frac{1}{r})} D_2^{-\beta_2(\frac{1}{2}-\frac{1}{r})}u \|_{\dot{W}_\alpha^{s,r}}=:X.    
\end{equation} 
By the triangle inequality followed by \cref{Bernstein},
\begin{equation*}
    \| u \|_{L^q} \leq \sum_{N,N_1,N_2} \| P_N P^{(1)}_{N_1}P^{(2)}_{N_2} u \|_{L_q} \lesssim \sum_{N,N_1,N_2} (N_1 N_2)^{\frac{1}{r}-\frac{1}{q}}\| P_N P^{(1)}_{N_1}P^{(2)}_{N_2} u \|_{L^r}. 
\end{equation*}
By \cref{Bernstein3,Bernstein5},
\begin{equation}
    \begin{split}
    \| P_N P^{(1)}_{N_1}P^{(2)}_{N_2} u \|_{L^r} &= \| D_1^{\beta_1(\frac{1}{2}-\frac{1}{r})} D_2^{\beta_2(\frac{1}{2}-\frac{1}{r})} P_N P^{(1)}_{N_1}P^{(2)}_{N_2} D_1^{-\beta_1(\frac{1}{2}-\frac{1}{r})} D_2^{-\beta_2(\frac{1}{2}-\frac{1}{r})}u\|_{L^r}\\
    &\lesssim N_1^{\beta_1(\frac{1}{2}-\frac{1}{r})} N_2^{\beta_2(\frac{1}{2}-\frac{1}{r})} X,
    \end{split}
\end{equation}
and similarly,
\begin{equation}
    \begin{split}
    \| P_N P^{(1)}_{N_1}P^{(2)}_{N_2} u \|_{L^r} &\simeq N^{-s} N_1^{\beta_1(\frac{1}{2}-\frac{1}{r})} N_2^{\beta_2(\frac{1}{2}-\frac{1}{r})} \|P_N P^{(1)}_{N_1}P^{(2)}_{N_2} D_1^{-\beta_1(\frac{1}{2}-\frac{1}{r})} D_2^{-\beta_2(\frac{1}{2}-\frac{1}{r})} |\nabla_\alpha|^{s}u \|_{L^r}\\ &\lesssim N^{-s} N_1^{\beta_1(\frac{1}{2}-\frac{1}{r})} N_2^{\beta_2(\frac{1}{2}-\frac{1}{r})} X.
    \end{split}
\end{equation}
Combining the two inequalities above, we obtain
\begin{equation*}
    \|P_N P^{(1)}_{N_1}P^{(2)}_{N_2} u \|_{L^q} \lesssim (N_1 N_2)^{\frac{1}{r}-\frac{1}{q}} N_1^{\beta_1(\frac{1}{2}-\frac{1}{r})}N_2^{\beta_2(\frac{1}{2}-\frac{1}{r})} \min(1,N^{-s})X.
\end{equation*}
Summing in $N_1,N_2$, 
\begin{equation*}
    \sum_{N_1 \lesssim N,\:N_2 \lesssim N^{\frac{2}{\alpha}}} (N_1 N_2)^{\frac{1}{r}-\frac{1}{q}} N_1^{\beta_1(\frac{1}{2}-\frac{1}{r})}N_2^{\beta_2(\frac{1}{2}-\frac{1}{r})} \lesssim  N ^{(1+\frac{\alpha_1}{\alpha_2})(\frac{1}{2} - \frac{1}{q}) - \alpha_1(\frac{1}{2}-\frac{1}{r})}.
\end{equation*}
Summing in $N$,
\begin{equation*}
    \sum_N N ^{(1+\frac{\alpha_1}{\alpha_2})(\frac{1}{2} - \frac{1}{q}) - \alpha_1(\frac{1}{2}-\frac{1}{r})} \min(1,N^{-s}) \lesssim 1,
\end{equation*}
by $(1+\frac{\alpha_1}{\alpha_2})(\frac{1}{2} - \frac{1}{q}) - \alpha_1(\frac{1}{2}-\frac{1}{r})>0$ by direct computation and the condition on $s$. This shows \cref{sobolevembedding}.

By \cref{BesselPotential}, $\| D_1^{-\beta_1(\frac{1}{2}-\frac{1}{r})} D_2^{-\beta_2(\frac{1}{2}-\frac{1}{r})}u \|_{L^r} \leq \| D_1^{-\beta_1(\frac{1}{2}-\frac{1}{r})} D_2^{-\beta_2(\frac{1}{2}-\frac{1}{r})}u \|_{W_\alpha^{s,r}}$. To show
that the inhomogeneous derivative controls the homogeneous derivative, it suffices to prove
\begin{equation}\label{derivative}
    \| \langle \nabla_\alpha \rangle^{-s} |\nabla_\alpha|^s f \|_{L^r} \lesssim \| f \|_{L^r},
\end{equation}
for any $s \geq 0$ and $r\in [1,\infty]$. Since $\langle \nabla_\alpha \rangle^{-s} |\nabla_\alpha|^s$ is the identity operator for $s=0$, assume $s>0$. Arguing as \cite[V.3 Lemma 2]{stein2016singular}, there exists $\mu_s$, a finite complex Borel measure on $\mathbb{R}^2$, such that $\langle \nabla_\alpha \rangle^{-s} |\nabla_\alpha|^s f = \mu_s \ast f$, and this shows the desired boundedness.
\end{proof}
\begin{remark}\label{sobolev algebra}
If one formally substitutes $\alpha_1=\alpha_2=2$, then one recovers the classical Sobolev embedding $W^{s,r}\hookrightarrow L^q$. Moreover if $s>\frac{1}{2}+\frac{1}{\alpha}$, it can be shown as in the proof of \cref{sobolev embedding2} that the continuous embedding $H^s_\alpha \hookrightarrow L^\infty$ holds and that $H^s_\alpha$ is an algebra. 
\end{remark}
Since the estimate we will need corresponds to $q=\infty$, we state it as a corollary.
\begin{corollary}\label{sobolev embedding}
For $2 \leq r<\infty$, $s>\frac{1}{2} + \frac{\alpha_1}{2\alpha_2} - \alpha_1 (\frac{1}{2}-\frac{1}{r})$, and $\alpha = \frac{2\alpha_2}{\alpha_1}$,
\begin{equation}\label{sobemb2}
    \| u \|_{L^\infty} \lesssim \| D_1^{-\beta_1(\frac{1}{2}-\frac{1}{r})}D_2^{-\beta_2(\frac{1}{2}-\frac{1}{r})}u \|_{W_\alpha^{s,r}}.
\end{equation}
Moreover, if $s>s_c$ and $p \geq 3$, then there exists $r\in (\frac{1}{\frac{s-s_c}{\alpha_1}+\frac{p-3}{2(p-1)}},\infty)$ such that $s>\frac{1}{2} + \frac{\alpha_1}{2\alpha_2} - \alpha_1 (\frac{1}{2}-\frac{1}{r})$.
\end{corollary}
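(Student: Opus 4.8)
The plan is to deduce \cref{sobolev embedding} directly from \cref{sobolev embedding2} plus an elementary rearrangement of exponents; there is no new analytic ingredient.

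For \cref{sobemb2} I would specialize \cref{sobemb} to $q=\infty$. Since $\alpha=\frac{2\alpha_2}{\alpha_1}$ gives $1+\frac{\alpha_1}{\alpha_2}=1+\frac{2}{\alpha}$, and at $q=\infty$ one has $(1+\frac{\alpha_1}{\alpha_2})(\frac12-\frac1q)=\frac12+\frac{\alpha_1}{2\alpha_2}$, the hypothesis $s>(1+\frac{\alpha_1}{\alpha_2})(\frac12-\frac1q)-\alpha_1(\frac12-\frac1r)$ of \cref{sobolev embedding2} becomes exactly $s>\frac12+\frac{\alpha_1}{2\alpha_2}-\alpha_1(\frac12-\frac1r)$. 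Because $2\le r<\infty=q$ is an admissible choice in \cref{sobolev embedding2}, the bound \cref{sobemb2} follows. Alternatively one can redo the argument of \cref{sobolev embedding2} verbatim with $q=\infty$: sum the two Bernstein bounds over the dyadic indices $N,N_1,N_2$, observe that summing in $N_1\lesssim N$, $N_2\lesssim N^{2/\alpha}$ produces the factor $N^{\frac12+\frac{\alpha_1}{2\alpha_2}}$, and note that the remaining geometric series in $N$ converges precisely when $\frac12+\frac{\alpha_1}{2\alpha_2}-\alpha_1(\frac12-\frac1r)-s<0$.

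For the ``moreover'' clause I would solve the threshold inequality for $1/r$. Using $s_c=\frac12+\frac{\alpha_1}{2\alpha_2}-\frac{\alpha_1}{p-1}$, divide the condition $s>\frac12+\frac{\alpha_1}{2\alpha_2}-\alpha_1(\frac12-\frac1r)$ by $\alpha_1>0$ and use $\frac12-\frac1{p-1}=\frac{p-3}{2(p-1)}$ to rewrite it equivalently as $\frac1r<\frac{p-3}{2(p-1)}+\frac{s-s_c}{\alpha_1}$. Since $p\ge3$ forces $\frac{p-3}{2(p-1)}\ge0$ and $s>s_c$ forces $\frac{s-s_c}{\alpha_1}>0$, the right-hand side is a strictly positive finite number, hence $r_0:=\big(\frac{p-3}{2(p-1)}+\frac{s-s_c}{\alpha_1}\big)^{-1}\in(0,\infty)$ and every $r>r_0$ satisfies the inequality. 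To keep $r$ inside the admissibility range of the first part, take $r=2$ if $r_0<2$ and any $r\in(r_0,\infty)$ otherwise, so that the chosen $r$ lies in $[2,\infty)\cap(r_0,\infty)\neq\emptyset$.

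I do not expect a genuine obstacle here: the substance is recognizing that the $L^\infty$ endpoint of \cref{sobolev embedding2} has the clean Sobolev threshold $\frac12+\frac{\alpha_1}{2\alpha_2}-\alpha_1(\frac12-\frac1r)$, followed by a one-line computation that turns this threshold, via the definition of $s_c$, into the claimed lower bound on $r$. The only point requiring attention is the bookkeeping tying $\alpha$ to $\alpha_1,\alpha_2$ so that $1+\frac{\alpha_1}{\alpha_2}$ and $\frac{\alpha_1}{2\alpha_2}$ appear with the correct coefficients, and observing that it is the strict inequality $s>s_c$ (not $s\ge s_c$) that guarantees $r_0<\infty$.
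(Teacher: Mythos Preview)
Your proposal is correct and follows the same approach as the paper: the embedding \cref{sobemb2} is the $q=\infty$ case of \cref{sobolev embedding2}, and the ``moreover'' clause is obtained by substituting $s_c=\frac12+\frac{\alpha_1}{2\alpha_2}-\frac{\alpha_1}{p-1}$ and solving for $\frac{1}{r}$, exactly as you do. The paper's proof is even terser than yours (it records only the equivalent inequality $s-s_c>\alpha_1(\frac{1}{p-1}-\frac12+\frac{1}{r})$ and declares the rest a straightforward computation); your additional remark ensuring $r\in[2,\infty)$ is not in the paper's proof but is the right thing to check for the downstream application in \cref{thm1}.
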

\begin{proof}
By direct computation, $s>\frac{1}{2} + \frac{\alpha_1}{2\alpha_2} - \alpha_1 (\frac{1}{2}-\frac{1}{r})$ holds if and only if
\begin{equation*}
    s-s_c > \alpha_1(\frac{1}{p-1}-\frac{1}{2}+\frac{1}{r}).
\end{equation*}
The conclusion follows from a straightforward computation.
\end{proof}
\begin{remark}
The Littlewood-Paley decomposition can be viewed as a vector-valued operator $Tf = (P_N f)_{N\in2^\mathbb{Z}}$ defined on $\mathcal{S}$, the class of Schwartz functions. For $\alpha=2$ and $r \in (1,\infty)$, $T:L^r(\mathbb{R}^2)\rightarrow L^r(\mathbb{R}^2;l^2(2^\mathbb{Z}))$ is bounded, a consequence of $T$ being a Calder\'on-Zygmund operator (CZO). However for $\alpha \neq 2$, $T$ is not a CZO. To see this, consider the integral representation
\begin{equation*}
\begin{split}
Tf(x,y) = \iint K(x,y;x^\prime,y^\prime)f(x^\prime,y^\prime)dx^\prime dy^\prime,\:K(x,y;x^\prime,y^\prime) = (\check{\phi}_N(x-x^\prime,y-y^\prime))_{N\in 2^\mathbb{Z}},
\end{split}
\end{equation*}
for $f \in \mathcal{S}$. If $T$ defines a CZO, then $K$ obeys a decay estimate of the form
\begin{equation*}
    \| K(x,y;x^\prime,y^\prime)\|_{l^2_N} \lesssim (|x-x^\prime|^2+|y-y^\prime|^2)^{-1}.
\end{equation*}
In particular, for $x=x^\prime=y^\prime=0$ and $y \in \mathbb{R}$,
\begin{equation*}
    (\sum_{N\in 2^\mathbb{Z}}|\check{\phi}_N(0,y)|^2)^{1/2} \lesssim |y|^{-2},
\end{equation*}
and therefore $    |\check{\phi}_N(0,y)|  = N^{1+\frac{2}{\alpha}}|\check{\phi}_1(0,N^{\frac{2}{\alpha}}y)|\lesssim |y|^{-2}$, or equivalently, $|\check{\phi}_1(0,y)| \lesssim N^{-(1-\frac{2}{\alpha})}|y|^{-2}$, for every $N \in 2^\mathbb{Z}$. By taking $N \to 0+$ for $\alpha \in (0,2)$, we obtain $\check{\phi}_1(0,y)=0$ for all $y \in \mathbb{R}\setminus \{0\}$, and thus $\check{\phi}_1(0,0) = \iint \phi(\sqrt{\xi^2+|\eta|^\alpha})d\xi d\eta=0$ by continuity, a contradiction. A similar argument holds for $\alpha \in (2,\infty)$ by taking $N \to \infty$.
\end{remark}
In the scaling-critical regime, the $\| \cdot \|_{L^{p-1}_{t\in I}L^\infty}^{p-1}$ norm is controlled by the Besov-refined Strichartz norms. The restriction $p>3$ in \cref{thm2} is due to the loss of derivatives in the Strichartz estimates in \cref{disp est4}. In fact, the restriction $p \geq 3$ seems unavoidable even in the sub-critical regime since the only Strichartz estimates on the inhomogeneous term without any derivative loss occurs when $(\Tilde{q},\Tilde{r}) = (\infty,2)$. The proof of the following lemma is adapted from that of \cite[Lemma 3.5]{1534-0392_2015_6_2265}.
\begin{lemma}\label{besov}
For $p>3$, let $(q,r)$ be admissible and $2<q<p-1$. Then,
\begin{equation*}
    \| u \|_{L^{p-1}_{t\in I}L^\infty}^{p-1} \lesssim \| u \|_{\Tilde{S}^{s_c}_{q,r}(I)}^{q} \| u \|_{\Tilde{S}^{s_c}_{\infty,2}(I)}^{p-1-q}. 
\end{equation*}
\end{lemma}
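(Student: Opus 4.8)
The plan is to reduce the lemma to a pointwise-in-time Gagliardo--Nirenberg inequality and then integrate. Set $\theta:=\frac{q}{p-1}\in(0,1)$ (using $2<q<p-1$), let $\sigma:=(1+\frac{\alpha_1}{\alpha_2}-\alpha_1)(\frac12-\frac1r)\ge 0$ be the homogeneous weight appearing in $\Tilde{S}^{s_c}_{q,r}$ (\cref{functionspace}), and put $\Phi(t):=\big(\sum_{N}\| P_N|\nabla_\alpha|^{-\sigma}u(t)\|_{W_\alpha^{s_c,r}}^2\big)^{1/2}$ for fixed $t$. Since $q\ge 2$, Minkowski's inequality in $L^{q/2}_t$ gives $\|\Phi\|_{L^q_{t\in I}}\le\big(\sum_N\|P_N|\nabla_\alpha|^{-\sigma}u\|_{L^q_{t\in I}W_\alpha^{s_c,r}}^2\big)^{1/2}=\|u\|_{\Tilde{S}^{s_c}_{q,r}(I)}$; and by the Plancherel theorem with the bounded overlap $\sum_N\phi_N^2\simeq 1$ one has $\|u(t)\|_{\dot{H}^{s_c}_\alpha}^2\simeq\sum_N\||\nabla_\alpha|^{s_c}P_Nu(t)\|_{L^2}^2$, whence (pushing $\sup_t$ inside the $\ell^2_N$ sum, and using $s_c>0$, which holds since $p>3$) $\|u\|_{L^\infty_t\dot{H}^{s_c}_\alpha}\le\|u\|_{L^\infty_tH^{s_c}_\alpha}\lesssim\|u\|_{\Tilde{S}^{s_c}_{\infty,2}(I)}$. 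Thus it suffices to prove
\[
\|u(t)\|_{L^\infty}\lesssim \Phi(t)^{\theta}\,\|u(t)\|_{\dot{H}^{s_c}_\alpha}^{1-\theta}\qquad(\text{a.e. }t\in I),
\]
since raising this to the power $p-1$, integrating in $t$, using $\theta(p-1)=q$, and pulling $\|u(t)\|_{\dot{H}^{s_c}_\alpha}^{p-1-q}$ out of the integral yields $\|u\|_{L^{p-1}_tL^\infty}^{p-1}\lesssim\|u\|_{L^\infty_t\dot{H}^{s_c}_\alpha}^{p-1-q}\|\Phi\|_{L^q_t}^{q}\lesssim\|u\|_{\Tilde{S}^{s_c}_{\infty,2}}^{p-1-q}\|u\|_{\Tilde{S}^{s_c}_{q,r}}^{q}$.

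To prove the pointwise bound, fix $t$ and a parameter $M>0$ and split $u=\sum_{N\le M}P_Nu+\sum_{N>M}P_Nu$. For the low block, \cref{Bernstein}, \cref{Bernstein3} and $\|P_Nu(t)\|_{L^2}\simeq N^{-s_c}\||\nabla_\alpha|^{s_c}P_Nu(t)\|_{L^2}$ give
\[
\Big\|\sum_{N\le M}P_Nu(t)\Big\|_{L^\infty}\lesssim\sum_{N\le M}N^{\frac12(1+\frac{\alpha_1}{\alpha_2})}\|P_Nu(t)\|_{L^2}=\sum_{N\le M}N^{a}\||\nabla_\alpha|^{s_c}P_Nu(t)\|_{L^2}\lesssim M^{a}\|u(t)\|_{\dot{H}^{s_c}_\alpha},
\]
where $a:=\frac12(1+\frac{\alpha_1}{\alpha_2})-s_c=\frac{\alpha_1}{p-1}>0$, so that the Cauchy--Schwarz factor $\big(\sum_{N\le M}N^{2a}\big)^{1/2}\simeq M^{a}$ converges. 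For the high block, \cref{Bernstein}, \cref{Bernstein3}, the $L^r$-boundedness of the (non-smooth) multipliers relating $\langle\nabla_\alpha\rangle^{s_c}P_N$ to $|\nabla_\alpha|^{s_c}P_N$ (handled as in \cref{Bernstein5}, cf. also \cref{BesselPotential}), and the identities $\|P_Nu(t)\|_{L^r}\simeq N^{\sigma}\|P_N|\nabla_\alpha|^{-\sigma}u(t)\|_{W_\alpha^{s_c,r}}$ for $N\le1$, $\simeq N^{\sigma-s_c}\|P_N|\nabla_\alpha|^{-\sigma}u(t)\|_{W_\alpha^{s_c,r}}$ for $N>1$, give
\[
\Big\|\sum_{N>M}P_Nu(t)\Big\|_{L^\infty}\lesssim\sum_{N>M}N^{\frac1r(1+\frac{\alpha_1}{\alpha_2})}\|P_Nu(t)\|_{L^r}\lesssim M^{-b}\,\Phi(t),
\]
with $b:=-\big(\tfrac1r(1+\tfrac{\alpha_1}{\alpha_2})+\sigma-s_c\big)=\alpha_1\big(\tfrac1q-\tfrac1{p-1}\big)>0$; the crucial negativity of the high-frequency exponent $\tfrac1r(1+\tfrac{\alpha_1}{\alpha_2})+\sigma-s_c=-b$ comes from the admissibility relation $\tfrac1q+\tfrac1r=\tfrac12$ together with $q<p-1$, while the tail $M<N\le1$ (present only when $M<1$) is summable because $\tfrac1r(1+\tfrac{\alpha_1}{\alpha_2})+\sigma>0$. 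Since $a+b=\tfrac{\alpha_1}{q}$ and $\tfrac{a}{a+b}=\theta$, optimizing $\|u(t)\|_{L^\infty}\lesssim M^{a}\|u(t)\|_{\dot{H}^{s_c}_\alpha}+M^{-b}\Phi(t)$ over $M>0$ — balance the two terms via $M^{a+b}\simeq\Phi(t)/\|u(t)\|_{\dot{H}^{s_c}_\alpha}$, the case $u(t)=0$ being trivial — produces exactly $\Phi(t)^{\theta}\|u(t)\|_{\dot{H}^{s_c}_\alpha}^{1-\theta}$.

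The step that requires care — and the reason for splitting at an $M=M(t)$ chosen adaptively rather than interpolating the dyadic pieces directly — is the scaling-critical regularity: on the $\Tilde{S}^{s_c}_{\infty,2}$ side each dyadic block loses $a=\frac{\alpha_1}{p-1}>0$ derivatives (the failure of $\dot{H}^{s_c}_\alpha\hookrightarrow L^\infty$), so $\sum_N\|P_Nu(t)\|_{L^\infty}$ cannot be summed against that side alone; interpolating the two dyadic bounds with the time-forced weight $\theta$ makes the $N$-powers cancel exactly at high frequencies, leaving a sum that only closes under the (too strong) $\ell^1$-Besov summation, whereas the Gagliardo--Nirenberg trade-off restores the $\ell^2_N$-summability matching the definitions of $\Tilde{S}^{s_c}_{q,r}$ and $\Tilde{S}^{s_c}_{\infty,2}$. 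The remaining points — that the non-smooth projections $P_{\le M}=\sum_{N\le M}P_N$ are bounded on every $L^p$, and the inhomogeneous analogue of \cref{Bernstein3} used above — are routine and are established exactly as in \cref{Bernstein5} and in the proof of \cref{sobolev embedding2}.
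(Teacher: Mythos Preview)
Your argument is correct and follows the same line as the reference the paper cites (Hong--Sire, Lemma~3.5): a pointwise-in-time Gagliardo--Nirenberg bound obtained by splitting $u(t)=P_{\le M}u(t)+P_{>M}u(t)$, estimating the low block against $\dot H^{s_c}_\alpha$ and the high block against the Besov-type square function, optimizing over the dyadic threshold $M$, and then integrating in time with the Minkowski step $\ell^2_N L^q_t\hookleftarrow L^q_t\ell^2_N$. Your bookkeeping of the exponents $a=\alpha_1/(p-1)$, $b=\alpha_1(1/q-1/(p-1))$, $a/(a+b)=\theta$ is exactly the scaling computation that makes the optimization close at $s=s_c$, and your handling of the inhomogeneous weight $\langle\nabla_\alpha\rangle^{s_c}$ versus $|\nabla_\alpha|^{s_c}$ on frequency-localized pieces (splitting $N\le1$ and $N>1$, using the multiplier arguments of Lemmas~\ref{Bernstein3} and~\ref{Bernstein5}) is the right way to absorb the non-smooth projections into the anisotropic framework.
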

In applying the Strichartz estimates on the nonlinear term, we need mixed-derivative analogues of some well-known nonlinear estimates on the power nonlinearity. Modifying the proofs of \cite[Lemma A.8, Proposition A.9]{tao2006nonlinear}, we have
\begin{lemma}\label{nonlin.est.}
For $k \in \mathbb{N}$, let $F \in C^{k-1,1}_{loc}(\mathbb{C};\mathbb{C})$ and $F(0)=0$ where $\mathbb{C}$ is identified with $\mathbb{R}^2$. For any $s \in [0,k]$, if $u \in H^s_\alpha \cap L^\infty$, then
\begin{equation*}
    \| F(u) \|_{H^s_\alpha} \lesssim_{s,k,\alpha, \| u\|_{L^\infty}} \| u \|_{H^s_\alpha}.
\end{equation*}
If $F(u) = |u|^{p-1}u$ for $p>1$, then for any $s \in [0,\lfloor p \rfloor]$ for $p$ not an odd integer or $s \in [0,\infty)$ for $p$ an odd integer,
\begin{equation}\label{Schauder}
    \| F(u) \|_{H^s_\alpha} \lesssim_{s,p,\alpha} \| u \|_{L^\infty}^{p-1} \| u\|_{H^s_\alpha}.
\end{equation}
If $F(u) = |u|^{p-1}$ for $p > 2$, then for any $s \in [0,\lfloor p \rfloor-1]$ for $p$ not an odd integer or $s \in [0,\infty)$ for $p$ an odd integer,
\begin{equation}\label{Schauder2}
    \| F(u) \|_{H^s_\alpha} \lesssim_{s,p,\alpha} \| u \|_{L^\infty}^{p-2} \| u\|_{H^s_\alpha}.
\end{equation}
\end{lemma}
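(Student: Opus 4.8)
The plan is to reduce this anisotropic Moser-type estimate to the one-dimensional fractional chain rule applied slicewise in each variable, so that \cite[Lemma A.8, Proposition A.9]{tao2006nonlinear} can be quoted essentially verbatim. The starting point is the elementary equivalence $(1+a+b)^{s}\simeq_{s}1+a^{s}+b^{s}$ valid for $s\ge 0$ and $a,b\ge 0$; taking $a=\xi^{2}$, $b=|\eta|^{\alpha}$ and applying Plancherel gives
\begin{equation*}
\| f\|_{H^{s}_{\alpha}} \simeq_{s} \| f\|_{L^{2}} + \| D_{1}^{s} f\|_{L^{2}} + \| D_{2}^{\frac{\alpha s}{2}} f\|_{L^{2}},
\end{equation*}
where $\frac{\alpha s}{2}=\frac{\alpha_{2}}{\alpha_{1}}s$. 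Because $\alpha_{1}\ge\alpha_{2}$, the transverse order $\frac{\alpha_{2}}{\alpha_{1}}s$ lies in $[0,s]$, hence in whichever interval ($[0,k]$, $[0,\lfloor p\rfloor]$, or $[0,\lfloor p\rfloor-1]$, and all of $[0,\infty)$ when $p$ is an odd integer) is assumed in the statement. It therefore suffices to estimate $\| F(u)\|_{L^{2}}$, $\| D_{1}^{s}F(u)\|_{L^{2}}$, and $\| D_{2}^{\alpha s/2}F(u)\|_{L^{2}}$ separately.

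The $L^{2}$ term is trivial: since $F(0)=0$, local Lipschitz continuity gives $|F(u)|\lesssim_{\|u\|_{L^{\infty}}}|u|$, while for $F(u)=|u|^{p-1}u$ and $F(u)=|u|^{p-1}$ one has $|F(u)|\le\|u\|_{L^{\infty}}^{p-1}|u|$ and $|F(u)|\le\|u\|_{L^{\infty}}^{p-2}|u|$ respectively, and integrating bounds $\| F(u)\|_{L^{2}}$ by the right-hand side. For $\| D_{1}^{s}F(u)\|_{L^{2}}$ we use that $D_{1}^{s}$ acts only in $x$, so for a.e.\ fixed $y$, $(D_{1}^{s}F(u))(\cdot,y)=|\partial_{x}|^{s}(F(u(\cdot,y)))$; by Fubini and the characterization above, $u(\cdot,y)\in H^{s}(\mathbb{R})\cap L^{\infty}(\mathbb{R})$ for a.e.\ $y$, with $\|u(\cdot,y)\|_{L^{\infty}_{x}}\le\|u\|_{L^{\infty}(\mathbb{R}^{2})}$. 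Quoting the one-dimensional ($n=1$) versions of \cite[Lemma A.8, Proposition A.9]{tao2006nonlinear} slicewise, with the dependence on the slice absorbed into the global $L^{\infty}$ norm, yields $\||\partial_{x}|^{s}F(u(\cdot,y))\|_{L^{2}_{x}}\lesssim C(\|u\|_{L^{\infty}})\||\partial_{x}|^{s}u(\cdot,y)\|_{L^{2}_{x}}$, with $C(\|u\|_{L^{\infty}})$ replaced by $\|u\|_{L^{\infty}}^{p-1}$, resp.\ $\|u\|_{L^{\infty}}^{p-2}$, in the power cases; squaring and integrating in $y$ by Tonelli gives $\| D_{1}^{s}F(u)\|_{L^{2}}\lesssim C(\|u\|_{L^{\infty}})\| D_{1}^{s}u\|_{L^{2}}$.

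The term $\| D_{2}^{\alpha s/2}F(u)\|_{L^{2}}$ is treated identically with the roles of $x$ and $y$ swapped: $D_{2}^{\alpha s/2}$ acts only in $y$, its order $\frac{\alpha_{2}}{\alpha_{1}}s$ is admissible as noted above, and the same slicewise application of the one-dimensional chain rule in $y$ (uniformly in $x$) bounds it by $C(\|u\|_{L^{\infty}})\| D_{2}^{\alpha s/2}u\|_{L^{2}}$. Adding the three contributions and invoking the norm equivalence both for $F(u)$ and for $u$ produces $\| F(u)\|_{H^{s}_{\alpha}}\lesssim C(\|u\|_{L^{\infty}})\|u\|_{H^{s}_{\alpha}}$, and reading off the exponent of $\|u\|_{L^{\infty}}$ in each case gives \cref{Schauder} and \cref{Schauder2}. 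The conceptual content is thus minimal; the work is in two bookkeeping points. First, one must justify the slicing carefully — that the relevant one-dimensional Sobolev norms of $u(\cdot,y)$ (resp.\ $u(x,\cdot)$) are finite for a.e.\ slice and square-integrable in the transverse variable, which is precisely what the symbol equivalence provides, and that the implicit constants of \cite[Lemma A.8, Proposition A.9]{tao2006nonlinear}, which a priori depend on the slice via its $L^{\infty}$ norm, may be taken uniform through $\|u\|_{L^{\infty}(\mathbb{R}^{2})}$; I expect this to be the main (though routine) obstacle. Second, one must extract the sharp powers $\|u\|_{L^{\infty}}^{p-1}$ and $\|u\|_{L^{\infty}}^{p-2}$ rather than a crude constant, which follows from the homogeneity/scaling argument of \cite{tao2006nonlinear}. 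Note that the non-smoothness of the Littlewood--Paley family $\{\phi_{N}\}$ plays no role here, since the reduction relies only on the standard homogeneous symbols of $D_{1}^{s}$ and $D_{2}^{\alpha s/2}$.
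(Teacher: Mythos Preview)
Your argument is correct and is genuinely more direct than the paper's. The paper works through the anisotropic Littlewood--Paley decomposition \eqref{lp2}: it splits $F(u)=F(P_{<N}u)+\bigl(F(u)-F(P_{<N}u)\bigr)$, handles the second piece by the pointwise Lipschitz bound, and for the first piece uses $\|P_N g\|_{L^2}\lesssim N^{-k}\||\nabla_\alpha|^k g\|_{L^2}$ together with the splitting $(\xi^2+|\eta|^\alpha)^{k/2}\simeq |\xi|^k+|\eta|^{\alpha k/2}$, so that the $x$-piece is treated by the integer chain rule and the $y$-piece by the classical isotropic Moser estimate in $H^{\alpha k/2}(\mathbb{R}^2)$; the fractional parameter $s$ enters only through the weight $N^{2s}$ in the dyadic sum. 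You instead apply the same symbol equivalence at the outset to the norm itself, reducing to $\|D_1^s F(u)\|_{L^2}$ and $\|D_2^{\alpha s/2}F(u)\|_{L^2}$, and then invoke the one-dimensional fractional chain rule slicewise at order $s$ (resp.\ $\frac{\alpha_2}{\alpha_1}s$). This bypasses the non-smooth projections $P_N$ entirely, which, as you note, is an advantage here since their $L^p$-boundedness (\cref{Bernstein5,Bernstein3}) required nontrivial work. The paper's route, on the other hand, keeps the argument inside the anisotropic framework used throughout and makes the passage from integer $k$ to fractional $s$ transparent via the dyadic sum. The two bookkeeping points you flag---a.e.\ finiteness and transverse square-integrability of the sliced $H^s$-norms, and uniformity of the Tao constants via $\|u(\cdot,y)\|_{L^\infty_x}\le\|u\|_{L^\infty(\mathbb{R}^2)}$---are indeed routine and suffice to close the argument.
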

The proof of \cref{nonlin.est.} is shown in the Appendix. Since \cref{leibniz} can be proved similarly, we omit its proof.
\begin{lemma}\label{leibniz}
For $s \geq 0$,
\begin{equation*}
    \| fg \|_{H^s_\alpha} \lesssim_{s,\alpha} \| f \|_{H^s_\alpha} \| g \|_{L^\infty} + \| f \|_{L^\infty} \| g \|_{H^s_\alpha},
\end{equation*}
for all $f,g \in H^s_\alpha \cap L^\infty$.
\end{lemma}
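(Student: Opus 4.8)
The plan is to run the standard paraproduct (Bony) decomposition, using the non-smooth anisotropic Littlewood--Paley family $\{P_N\}_{N\in2^{\mathbb{Z}}}$ of \cref{LP2}--\cref{LP3} in place of the usual smooth one. Since the case $s=0$ is just H\"older's inequality, assume $s>0$. Because $(1+\xi^2+|\eta|^\alpha)^s\simeq 1+(\xi^2+|\eta|^\alpha)^s$ for $s\ge0$, it suffices to prove, in addition to the trivial bound $\|fg\|_{L^2}\le\|f\|_{L^2}\|g\|_{L^\infty}$ (which in particular shows $fg\in L^2$), the homogeneous estimate
\[
\||\nabla_\alpha|^s(fg)\|_{L^2}\lesssim_{s,\alpha}\|f\|_{\dot{H}^s_\alpha}\|g\|_{L^\infty}+\|f\|_{L^\infty}\|g\|_{\dot{H}^s_\alpha};
\]
the inequality $\|f\|_{\dot{H}^s_\alpha}\le\|f\|_{H^s_\alpha}$ then gives the lemma. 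I would split $fg=\pi_1+\pi_2+\pi_3$ with $\pi_1=\sum_N(P_Nf)(P_{\le N/C_0}g)$, $\pi_2=\sum_N(P_{\le N/C_0}f)(P_Ng)$, and $\pi_3=\sum_N(P_Nf)(\widetilde P_{\sim N}g)$, where $\widetilde P_{\sim N}:=\sum_{N/C_0<M<C_0N}P_M$ and $C_0=C_0(\alpha)$ is a fixed large dyadic integer. The ingredients are: the uniform-in-$N$ $L^p$-boundedness of $P_N$ (\cref{Bernstein5}) and of $P_{\le N}$ (same proof, since $\sum_{N'\le N}\phi_{N'}(\xi,\eta)=\psi(\sqrt{\xi^2+|\eta|^\alpha}/N)$ has an $L^1$ convolution kernel); the statement in \cref{Bernstein3} that $|\nabla_\alpha|^s$ acts as an $N^s$-multiplier on functions with Fourier support in an annulus $\{\xi^2+|\eta|^\alpha\sim N^2\}$; and Plancherel in the forms $\||\nabla_\alpha|^su\|_{L^2}^2\simeq\sum_N N^{2s}\|P_Nu\|_{L^2}^2$ and $\|\sum_Nv_N\|_{L^2}^2\simeq\sum_N\|v_N\|_{L^2}^2$ whenever each $\widehat{v_N}$ is supported in $\{\xi^2+|\eta|^\alpha\sim N^2\}$ (such annuli have bounded overlap).

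The first step is a frequency-support bookkeeping. Using that $\mathrm{supp}\,\phi_N$ lies in the rectangle $\{|\xi|\lesssim N,\ |\eta|\lesssim N^{2/\alpha}\}$ noted in the remark accompanying \cref{Bernstein}, and taking $C_0$ large while distinguishing whether $\xi^2$ or $|\eta|^\alpha$ dominates on that rectangle, one checks that every summand of $\pi_1$ and $\pi_2$ has Fourier support in an annulus $\{\xi^2+|\eta|^\alpha\sim N^2\}$, whereas every summand of $\pi_3$ has Fourier support only in $\{\xi^2+|\eta|^\alpha\lesssim N^2\}$. For $\pi_1$, almost-orthogonality and the multiplier bound give
\[
\||\nabla_\alpha|^s\pi_1\|_{L^2}^2\lesssim\sum_N N^{2s}\|(P_Nf)(P_{\le N/C_0}g)\|_{L^2}^2\lesssim\|g\|_{L^\infty}^2\sum_N N^{2s}\|P_Nf\|_{L^2}^2\lesssim\|g\|_{L^\infty}^2\|f\|_{\dot{H}^s_\alpha}^2,
\]
using $\|P_{\le N/C_0}g\|_{L^\infty}\lesssim\|g\|_{L^\infty}$; by symmetry $\||\nabla_\alpha|^s\pi_2\|_{L^2}\lesssim\|f\|_{L^\infty}\|g\|_{\dot{H}^s_\alpha}$.

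The resonant term $\pi_3$ is the main obstacle, because its summands are not localized to frequency scale $N$, so the crude triangle inequality only bounds $\||\nabla_\alpha|^s\pi_3\|_{L^2}$ by $\|g\|_{L^\infty}\sum_N N^s\|P_Nf\|_{L^2}$, an $\ell^1_N$-sum that is not controlled by $\|f\|_{\dot{H}^s_\alpha}$. I would instead argue by duality. For $h\in L^2$ with $\|h\|_{L^2}\le1$, expand $|\nabla_\alpha|^sh=\sum_L|\nabla_\alpha|^sP_Lh$; since the $N$-th summand of $\pi_3$ has Fourier support in $\{\xi^2+|\eta|^\alpha\lesssim N^2\}$, its pairing with $|\nabla_\alpha|^sP_Lh$ vanishes unless $L\lesssim N$, so by H\"older's inequality ($L^2\cdot L^\infty\cdot L^2$), \cref{Bernstein3}, and the uniform $L^\infty$-bound of \cref{Bernstein5},
\[
|\langle|\nabla_\alpha|^s\pi_3,\,h\rangle|\lesssim\|g\|_{L^\infty}\sum_L L^s\|P_Lh\|_{L^2}\sum_{N\gtrsim L}\|P_Nf\|_{L^2}.
\]
Writing $\|P_Nf\|_{L^2}=N^{-s}A_N$ with $\|(A_N)\|_{\ell^2}\simeq\|f\|_{\dot{H}^s_\alpha}$, one application of Cauchy--Schwarz in $N$ (using the convergent geometric series $\sum_{N\gtrsim L}N^{-2s+\epsilon}\simeq L^{-2s+\epsilon}$, valid exactly because $s>0$, for a fixed $\epsilon\in(0,2s)$) gives $\sum_{N\gtrsim L}N^{-s}A_N\lesssim L^{-s+\epsilon/2}\big(\sum_{N\gtrsim L}N^{-\epsilon}A_N^2\big)^{1/2}$, and then a second Cauchy--Schwarz in $L$, with $\sum_L L^{2s}\|P_Lh\|_{L^2}^2\cdot L^{-2s}=\|h\|_{L^2}^2\le1$ and $\sum_{L\lesssim N}L^\epsilon\lesssim N^\epsilon$, bounds the right-hand side by $\|g\|_{L^\infty}\|f\|_{\dot{H}^s_\alpha}$. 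Taking the supremum over such $h$ yields $\||\nabla_\alpha|^s\pi_3\|_{L^2}\lesssim\|f\|_{\dot{H}^s_\alpha}\|g\|_{L^\infty}$, and adding the three contributions completes the argument. The non-smoothness of the symbols $\phi_N$ enters only through the uniform $L^p$-bound for $P_{\le N}$ and the rectangular shape of $\mathrm{supp}\,\phi_N$, both already available from \cref{Bernstein5} and the remark accompanying \cref{Bernstein}, so apart from these inputs the proof is the classical paraproduct argument and is insensitive to $\alpha$.
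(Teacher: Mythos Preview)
Your argument is correct. The paper does not actually write out a proof of this lemma; it only says the proof is ``similar'' to that of \cref{nonlin.est.} and omits it. Your Bony paraproduct decomposition is the natural bilinear analogue of the telescoping Littlewood--Paley argument used there, and it relies on exactly the same inputs (\cref{Bernstein5}, \cref{Bernstein3}, Plancherel, and the square-function characterization of $H^s_\alpha$). Two small remarks: first, your claim that $P_{\le N}$ is uniformly bounded on $L^p$ does require one extra observation beyond \cref{Bernstein5}, namely that $\psi(\sqrt{\xi^2+|\eta|^\alpha})$ is identically $1$ near the origin so that all its derivatives are supported on the same annulus as those of $\phi_1$, after which the decay estimates \cref{decay1}--\cref{decay3} carry over verbatim; second, your frequency-support bookkeeping for $\pi_1,\pi_2$ is correct but genuinely uses that $C_0$ depends on $\alpha$, since the quasi-triangle inequality $|\eta_1+\eta_2|^\alpha\le C_\alpha(|\eta_1|^\alpha+|\eta_2|^\alpha)$ does. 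Both of these are fine, and the duality treatment of $\pi_3$ is clean.
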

As such, the analogue of the Leibniz rule in Sobolev spaces holds in anisotropic Sobolev spaces as well. In relation to the Kato-Ponce inequality, it is of independent interest whether the Leibniz rule holds in the $L^p$-based anisotropic Sobolev spaces when $p \neq 2$.  
\section{Well-posedness.}\label{gwp}
This section is devoted to the proofs of \cref{thm1,thm2}. Moreover global well-posedness for $s_c<\frac{\alpha_1}{2}$ is discussed.
\begin{lemma}[Persistence of Regularity]\label{persistence}
For $0\leq s_1 \leq s_2$ and $R>0$, $B_R = \{u: \|u \|_{X^{s_2}}\leq R\}$ is complete under $\| \cdot \|_{X^{s_1}}$.
\end{lemma}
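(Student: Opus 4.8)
The plan is to prove that $B_R$ is a \emph{closed} subset of the ambient Banach space $X^{s_1}$: since $X^{s_1}$ is complete --- it is the intersection of $L^\infty_{t\in I}H^{s_1}_\alpha$, a Bochner space over the Banach space $H^{s_1}_\alpha$, with $S^{s_1}_{q,r}(I)$, complete by construction and by \cref{BesselPotential} --- any closed subset is itself a complete metric space under the induced metric. So I would take a sequence $\{u_n\}\subseteq B_R$ with $u_n\to u$ in $X^{s_1}$ and show $\|u\|_{X^{s_2}}\le R$, i.e.\ $u\in B_R$. The mechanism throughout is weak (resp.\ weak-$*$) compactness together with lower semicontinuity of norms, the strong $X^{s_1}$-limit serving to pin down the weak limit.

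First I would handle the $L^\infty_{t\in I}H^{s_2}_\alpha$ part. Convergence in $X^{s_1}$ gives $u_n\to u$ in $L^\infty_{t\in I}H^{s_1}_\alpha$, so after passing to a subsequence $u_n(t)\to u(t)$ in $H^{s_1}_\alpha$ for a.e.\ $t\in I$. Fixing such a $t$, the bound $\|u_n(t)\|_{H^{s_2}_\alpha}\le R$ and the Hilbert structure of $H^{s_2}_\alpha$ furnish a further subsequence with $u_{n_j}(t)\rightharpoonup w$ weakly in $H^{s_2}_\alpha$; since weak $H^{s_2}_\alpha$-convergence and strong $H^{s_1}_\alpha$-convergence both imply convergence in $\mathcal{S}'(\mathbb{R}^2)$, necessarily $w=u(t)$, and weak lower semicontinuity yields $\|u(t)\|_{H^{s_2}_\alpha}\le R$. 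Taking the essential supremum in $t$ gives $\|u\|_{L^\infty_{t\in I}H^{s_2}_\alpha}\le R$. The Strichartz component is handled identically: with $\Lambda=D_1^{-\beta_1(\frac12-\frac1r)}D_2^{-\beta_2(\frac12-\frac1r)}$ one has $\Lambda u_n\to\Lambda u$ in $L^q_{t\in I}W^{s_1,r}_\alpha$ and $\|\Lambda u_n\|_{L^q_{t\in I}W^{s_2,r}_\alpha}\le R$; when $q,r\in(1,\infty)$ the latter space is reflexive ($\langle\nabla_\alpha\rangle^{s_2}$ identifies $W^{s_2,r}_\alpha$ isometrically with a closed subspace of $L^r$, cf.\ \cref{BesselPotential}), so a weakly convergent subsequence exists, its limit equals $\Lambda u$ by the same distributional identification (using $W^{s_2,r}_\alpha\hookrightarrow W^{s_1,r}_\alpha$, a case of \cref{BesselPotential}), and lower semicontinuity gives $\|u\|_{S^{s_2}_{q,r}(I)}\le R$; in the endpoint case $q=\infty$ one reuses the a.e.-in-$t$ argument of the previous sentence. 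Combining, $\|u\|_{X^{s_2}}\le R$, so $u\in B_R$, which is therefore closed in $X^{s_1}$ and hence complete.

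The hard part is not any single estimate but the care needed with the $L^\infty$-in-time norms, which are not reflexive: the remedy is to descend to a.e.\ $t$ and exploit the Hilbert (resp.\ reflexive) structure only in the spatial variables. Once that is done, identifying each weak limit with the already-known strong low-regularity limit $u$ is automatic, because every convergence in play is a fortiori convergence in $\mathcal{S}'$. Note that no compactness in $X^{s_2}$ itself is required --- only closedness of $B_R$ in the weaker topology --- which is precisely what is needed for the contraction-mapping scheme to run on $B_R$ equipped with the $X^{s_1}$-metric.
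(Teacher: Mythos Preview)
Your argument is correct and is precisely the standard weak-compactness/lower-semicontinuity route that the paper invokes by citing \cite[Lemma 3.2]{1534-0392_2015_6_2265} and \cite[Theorem 1.2.5]{cazenave2003semilinear}; the two ingredients the paper singles out---completeness of $W^{s,r}_\alpha$ (\cref{BesselPotential}) and its reflexivity for $r\in(1,\infty)$---are exactly what you use. Your write-up simply unpacks those citations, so the approaches coincide.
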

\begin{proof}
See \cite[Lemma 3.2]{1534-0392_2015_6_2265} and \cite[Theorem 1.2.5]{cazenave2003semilinear}. By \cref{BesselPotential}, $W_\alpha^{s,r}$ is complete for $s \geq 0$. Moreover, it is reflexive for $r \in (1,\infty)$.
\end{proof}
\begin{proof}[Proof of \cref{thm1}]
Let $I = [0,T]$ for $T>0$ to be determined. For $s\in\mathbb{R}$ that satisfies the hypothesis of \cref{thm1}, there exists an admissible $(q,r)$ such that $q>\max(p-1,2)$ where $r$ is as in \cref{sobolev embedding}. We wish to show that
\begin{equation}\label{contraction}
    \Gamma u = U(t)u_0 -i\mu \int_0^t U(t-t^\prime)|u(t^\prime)|^{p-1} u(t^\prime)dt^\prime
\end{equation}
defines a contraction on $X^s$. From \cref{Strichartz1}, the linear estimate follows immediately:
\begin{equation*}
    \| U(t) u_0 \|_{L^\infty_{t\in I}H_\alpha^{s}}, \| U(t)u_0\|_{S^{s}_{q,r}(I)} \lesssim \| u_0 \|_{H_\alpha^{s}}.
\end{equation*}
For the nonlinear estimate, use \cref{Strichartz2,nonlin.est.} to obtain
\begin{equation}\label{nonlinear estimate}
    \|\int_0^t U(t-t^\prime)|u(t^\prime)|^{p-1} u(t^\prime)dt^\prime \|_{X^s} \lesssim \| |u|^{p-1}u\|_{L^1_{t\in I} H_\alpha^{s}} \lesssim \| u \|_{L^{p-1}_{t\in I}L^\infty}^{p-1}\| u \|_{L^\infty_{t\in I}H_\alpha^{s}}.
\end{equation}
By the H\"older's inequality in $t$ and \cref{sobolev embedding}, the RHS of \cref{nonlinear estimate} is estimated above by
\begin{equation*}
\begin{split}
    &\lesssim T^{0+} \| u \|_{L^{q}_{t\in I}L^\infty}^{p-1} \| u \|_{L^\infty_{t\in I}H_\alpha^{s}}\lesssim T^{0+} \| u \|_{S^{s}_{q,r}}^{p-1}\| u \|_{L^\infty_{t\in I}H_\alpha^{s}} \lesssim T^{0+} \| u \|_{X^s}^p.
\end{split}
\end{equation*}
Hence by taking $T>0$ sufficiently small such that $T^{0+}\| u_0 \|_{H_\alpha^{s}}^{p-1}\ll 1$, we have $\Gamma: \Omega \rightarrow \Omega$ where $\Omega = \{ u \in X^s: \| u \|_{X^s} \leq 2 \| u_0 \|_{H_\alpha^{s}}\}$. Showing that $\Gamma$ defines a contraction on $\Omega$ follows estimates similar to the ones we have done previously. It should be noted that the contraction is proved on $\Omega$ equipped with the $\| \cdot \|_{X^0}$ norm, which is complete by \cref{persistence}.
\begin{equation*}
\begin{split}
\| \Gamma u - \Gamma v \|_{X^0} &\lesssim \| |u|^{p-1}u - |v|^{p-1}v\|_{L^1_{t\in I}L^2}\lesssim \| (|u|^{p-1}+|v|^{p-1})|u-v|\|_{L^1_{t\in I}L^2}\\
&\lesssim  (\| u \|_{L^{p-1}_{t\in I}L^\infty}^{p-1}+ \| v \|_{L^{p-1}_{t\in I}L^\infty}^{p-1})\| u-v\|_{L^\infty_{t\in I}L^2}\\
&\lesssim T^{0+} (\|u \|_{S^{s}_{q,r}(I)}^{p-1} + \|v \|_{S^{s}_{q,r}(I)}^{p-1}) \| u -v \|_{L^\infty_{t\in I}L^2}\lesssim T^{0+} (\|u \|_{X^s}^{p-1} + \|v \|_{X^s}^{p-1}) \| u -v \|_{X^0}.
\end{split}
\end{equation*}
By taking $T\ll 1$ depending on $\| u_0 \|_{H^{s}_\alpha}$, $\Gamma$ is a contraction in $(\Omega, \| \cdot \|_{X^0})$, and therefore there exists a unique fixed point in $\Omega$.

To show that the solution map is continuous, let $u_0,u_{0,n} \in H^s_\alpha$ for $n \in \mathbb{N}$ with $u_{0,n} \xrightarrow[n\to \infty]{H^s_\alpha}u_0$ and let $u,u_n$ be the solutions corresponding to $u_0,u_{0,n}$, respectively. Since the time of existence depends on the $H^s_\alpha$-norm of data, there exists $T>0$ such that $u,u_n \in X^s$ for sufficiently large $n$. Then by \cref{disp est4} and the triangle inequality,
\begin{equation*}
\begin{split}
    \| u-u_n \|_{X^s} &\lesssim \| u_0 - u_{0,n} \|_{H^s_\alpha} + \| |u|^{p-1}u - |u_n|^{p-1}u_n \|_{L^1_t H^s_\alpha}.
\end{split}
\end{equation*}
By the Mean Value Theorem and \cref{leibniz},
\begin{equation*}
\begin{split}
    \| |u|^{p-1}u - |u_n|^{p-1}u_n \|_{L^1_t H^s_\alpha} &\lesssim
    (\| u \|_{L^{p-1}_t L^\infty}^{p-1}+\| u_n \|_{L^{p-1}_t L^\infty}^{p-1}) \| u-u_n \|_{L^\infty_t H^s_\alpha}\\
    &+(\| u \|_{L^{p-1}_t L^\infty}^{p-2}+\| u_n \|_{L^{p-1}_t L^\infty}^{p-2})(\| u \|_{L^\infty_t H^s_\alpha}+\| u_n \|_{L^\infty_t H^s_\alpha})\| u-u_n \|_{L^{p-1}_t L^\infty}\\
    &\lesssim T^{0+} \Big(\| u \|_{S^s_{q,r}}^{p-1}+\| u_n \|_{S^s_{q,r}}^{p-1}  + (\| u \|_{S^s_{q,r}}^{p-2}+\| u_n \|_{S^s_{q,r}}^{p-2})(\| u \|_{X^s} + \| u_n \|_{X^s})\Big)\| u-u_n \|_{X^s}.
\end{split}
\end{equation*}
Hence,
\begin{equation*}
    \| u - u_n \|_{X^s} \leq C_1 \| u_0 - u_{0,n} \|_{H^s_\alpha} + C_2 T^{0+} \| u-u_n \|_{X^s},
\end{equation*}
where $C_2$ depends on $\| u_0 \|_{H^s_\alpha}$. By shrinking $T>0$ if necessary, we obtain
\begin{equation*}
    \| u-u_n \|_{X^s} \leq C \| u_0 - u_{0,n} \|_{H^s_\alpha},
\end{equation*}
and the claim follows.
\end{proof}
\begin{remark}
Note that in the proof, it is assumed that $s \in (s_c,\lfloor p \rfloor]$, for $p$ not an odd integer, to obtain a unique solution in $X^s$ whereas the assumption strengthens to $s \in (s_c,\lfloor p \rfloor - 1]$ to prove the continuous dependence on initial data. 
\end{remark}
\begin{proof}[Proof of \cref{thm2}]
By the contraction mapping theorem, the existence of a unique solution is obtained in $\Tilde{X}^{s_c}$ where $(q,r)$ is as \cref{besov}. To provide the main ideas of the proof, let $\Gamma$ be as \cref{contraction}. By a priori estimates
\begin{equation*}
\begin{split}
    \| \Gamma u \|_{\Tilde{S}^{s_c}_{\infty,2}(I)} &\lesssim \| u_0 \|_{H^{s_c}_\alpha} + \| u \|^q_{\Tilde{S}^{s_c}_{q,r}(I)}\| u \|^{p-q}_{\Tilde{S}^{s_c}_{\infty,2}(I)}\\
    \| \Gamma u \|_{\Tilde{S}^{s_c}_{q,r}(I)} &\lesssim \| U(t) u_0 \|_{\Tilde{S}^{s_c}_{q,r}(I)} + \| u \|^q_{\Tilde{S}^{s_c}_{q,r}(I)}\| u \|^{p-q}_{\Tilde{S}^{s_c}_{\infty,2}(I)},   
\end{split}
\end{equation*}
$T>0$ is chosen such that $\| U(t)u_0 \|_{\Tilde{S}^{s_c}_{q,r}(I)} \leq c$ where $c \simeq \| u_0 \|_{H^{s_c}_\alpha}^{-\epsilon}$ and $\epsilon=\epsilon(p,q,\alpha_1,\alpha_2)>0$. Then, it can be shown that $\Gamma$ defines a contraction on
\begin{equation}\label{contraction2}
M = \{u \in \Tilde{X}^{s_c}: \| u \|_{\Tilde{S}^{s_c}_{\infty,2}(I)} \lesssim \| u_0 \|_{H^{s_c}_\alpha},\: \| u \|_{\Tilde{S}^{s_c}_{q,r}(I)}\lesssim c\},    
\end{equation}
where the implicit constants depend on $p,q$ and the constants from the Strichartz estimates, which depend on $\alpha_1,\alpha_2$. The rest of the argument is standard, and thus we omit it.

To show small data scattering, observe that there exists $\delta=\delta(p,q,\alpha_1,\alpha_2)>0$ such that whenever $\| u_0 \|_{H^{s_c}_\alpha} < \delta$, it follows that $\| U(t)u_0 \|_{\Tilde{S}^{s_c}_{q,r}([0,T])} \leq \| U(t)u_0 \|_{\Tilde{S}^{s_c}_{q,r}(\mathbb{R})} \lesssim \| u_0 \|_{H^{s_c}_\alpha}\leq c$. Taking $T$ to be arbitrarily large, $u$ extends globally in time. Arguing as \cite[Theorem 1.3]{1534-0392_2015_6_2265}, it can be shown that $\lim \limits_{t\to\pm \infty}U(-t)u(t)=:u_{\pm}(\alpha)$ is convergent in $H^{s_c}_\alpha$ from which \cref{scattering} follows. 
\end{proof}

In the rest of this section, it is shown that the finite energy solution can be extended globally in time under some hypotheses. Let $\alpha = \frac{2\alpha_2}{\alpha_1}$. In \cref{conservation}, the derivative terms are controlled if $u_0 \in H_\alpha^{\frac{\alpha_1}{2}}$. The second term, or the nonlinear energy, can be controlled by the Gagliardo-Nirenberg inequality for mixed derivatives. By \cref{gagliardo}, the nonlinear energy is finite for $u_0 \in H_\alpha^{\frac{\alpha_1}{2}}$ if $p<\frac{\alpha_1^{-1}+\alpha_2^{-1}+1}{\alpha_1^{-1}+\alpha_2^{-1}-1}$, or equivalently if $s_c<\frac{\alpha_1}{2}$.
\begin{lemma}[Gagliardo-Nirenberg Inequality]\label{GN inequality}
Let $s>0,\: 1<p<q\leq \infty$ and $0<\theta<1$ satisfy $s\theta = (1+\frac{2}{\alpha})(\frac{1}{p}-\frac{1}{q})$. Then,
\begin{equation}\label{gagliardo}
    \| u \|_{L^q} \lesssim_{s,p,q,\alpha} \| u \|_{\dot{W}_\alpha^{s,p}(\mathbb{R}^2)}^\theta \| u \|_{L^p}^{1-\theta}.
\end{equation}
\end{lemma}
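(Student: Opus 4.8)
The plan is to run the classical dyadic (Littlewood--Paley) proof of Gagliardo--Nirenberg, with the isotropic Bernstein inequality replaced by the anisotropic one of \cref{Bernstein} and with the uniform $L^p$-boundedness of the non-smooth projections $P_N$ (\cref{Bernstein5}) doing the work that rapid kernel decay does in the $\alpha=2$ case. Write $a := (1+\tfrac{2}{\alpha})(\tfrac1p-\tfrac1q)$, so that the hypotheses read $0 < a = s\theta < s$ (positivity of $a$ because $1+\tfrac2\alpha>0$ and $p<q$; the bound $a<s$ because $\theta<1$). One may assume $u$ is Schwartz, or merely that $u \in L^p \cap \dot W^{s,p}_\alpha$ with both norms finite and nonzero, since the degenerate cases force $u=0$. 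Using the resolution of identity $u = \sum_{N \in 2^{\mathbb Z}} P_N u$ and the triangle inequality, it suffices to bound $\sum_N \|P_N u\|_{L^q}$.

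For a dyadic threshold $M$ to be chosen, split the sum at $N=M$. For the low frequencies $N\le M$, \cref{Bernstein} together with the uniform bound $\|P_N u\|_{L^p}\lesssim \|u\|_{L^p}$ of \cref{Bernstein5} gives
\[
\sum_{N\le M}\|P_N u\|_{L^q} \lesssim \sum_{N\le M} N^{a}\|P_N u\|_{L^p} \lesssim \Big(\sum_{N\le M}N^{a}\Big)\|u\|_{L^p} \lesssim M^{a}\|u\|_{L^p},
\]
where the last step sums the dyadic geometric series using $a>0$. For the high frequencies $N>M$, I would first convert $L^q$ to $L^p$ via \cref{Bernstein}, then trade powers of $N$ for the homogeneous derivative using \cref{Bernstein3}, namely $\|P_N u\|_{L^p}\simeq N^{-s}\||\nabla_\alpha|^s P_N u\|_{L^p} = N^{-s}\|P_N|\nabla_\alpha|^s u\|_{L^p}$ (the operators commute, being Fourier multipliers), and finally apply \cref{Bernstein5} to $|\nabla_\alpha|^s u$:
\[
\sum_{N>M}\|P_N u\|_{L^q} \lesssim \sum_{N>M} N^{a-s}\||\nabla_\alpha|^s u\|_{L^p} \lesssim M^{a-s}\|u\|_{\dot W^{s,p}_\alpha},
\]
the dyadic sum converging because $a-s<0$.

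Adding the two estimates yields $\|u\|_{L^q}\lesssim M^{a}\|u\|_{L^p} + M^{a-s}\|u\|_{\dot W^{s,p}_\alpha}$ for every dyadic $M$, and I would optimize by taking $M$ to be the dyadic integer nearest to $\big(\|u\|_{\dot W^{s,p}_\alpha}/\|u\|_{L^p}\big)^{1/s}$, which balances the two terms up to a constant and gives $\|u\|_{L^q}\lesssim \|u\|_{\dot W^{s,p}_\alpha}^{a/s}\|u\|_{L^p}^{1-a/s}$; since $a/s=\theta$, this is exactly \cref{gagliardo}.

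I do not anticipate a serious obstacle. The only points requiring a little care are: the convergence of $u=\sum_N P_N u$ in $L^q$ for $u\in L^p\cap\dot W^{s,p}_\alpha$, which follows a posteriori from the very Bernstein bounds above (the low tail is $O(M^{a}\|u\|_{L^p})\to 0$ as $M\to 0$ and the high tail is $O(M^{a-s}\|u\|_{\dot W^{s,p}_\alpha})\to 0$ as $M\to\infty$); the harmless rounding of the optimal $M$ to a dyadic integer; and — as throughout this section — the fact that the non-smoothness of the symbols $\phi_N$ is absorbed entirely by \cref{Bernstein5}. If desired, the estimate can first be proved for Schwartz functions and then extended by density, using that $L^p$ and $\dot W^{s,p}_\alpha$ embed into $\mathcal S'$.
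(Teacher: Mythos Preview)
Your proof is correct and follows essentially the same route as the paper's: both arguments decompose $u=\sum_N P_N u$, apply the anisotropic Bernstein inequality (\cref{Bernstein}) to pass from $L^q$ to $L^p$, and then use \cref{Bernstein3} and \cref{Bernstein5} to bound $\|P_N u\|_{L^p}$ by $\min\big(\|u\|_{L^p},\,N^{-s}\|u\|_{\dot W^{s,p}_\alpha}\big)$. The only cosmetic difference is that the paper first exploits the two-parameter scaling $u(x,y)\mapsto \mu\,u(x/\lambda,y/\lambda^{2/\alpha})$ to normalize $\|u\|_{L^p}=\|u\|_{\dot W^{s,p}_\alpha}=1$ (so the balance point sits at $N=1$ and one simply sums $\sum_N N^{s\theta}\min(1,N^{-s})$), whereas you leave the norms unnormalized and optimize over the dyadic threshold $M$; these are equivalent bookkeeping choices.
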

\begin{remark}
The defocusing nonlinearity ($\mu=1$) immediately yields global existence by the conservation of mass and $\| |\nabla_\alpha|^{\frac{\alpha_1}{2}}u(t)\|_{L^2} \lesssim \sqrt{E[u_0]}$.
\end{remark}
In the focusing case ($\mu=-1$), however, the global existence of solution is not expected for every data, as it is the case for the classical NLS. For instance, the focusing cubic NLS on $\mathbb{R}^2$ is $L^2$-critical and it is known (see \cite{weinstein1982nonlinear}) that this equation is globally well-posed for $u_0 \in H^1$ with $\| u_0 \|_{L^2}< \| \phi \|_{L^2}$ where $\phi \in \mathcal{S}$ is the ground state solution to the (appropriately scaled) PDE
\begin{equation}\label{soliton}
    \Delta \psi + \psi^3 = \psi.
\end{equation}
We use the Gagliardo-Nirenberg inequality for mixed derivatives to bootstrap the local theory to the global theory in $H_\alpha^{\frac{\alpha_1}{2}}$.
\begin{corollary}\label{global existence}
Consider the focusing \cref{mixedNLS} and let $p,\alpha = \frac{2\alpha_2}{\alpha_1}$ satisfy $s_c < \frac{\alpha_1}{2}$. If $(p-1)(\frac{1}{\alpha_1}+\frac{1}{\alpha_2})<2$, then the solution exists globally in time for any $u_0 \in H^{\frac{\alpha_1}{2}}_\alpha$. If $(p-1)(\frac{1}{\alpha_1}+\frac{1}{\alpha_2})=2$, then there exists $C = C(p,\alpha_1,\alpha_2)>0$ such that whenever $\| u_0 \|_{L^2} < C$, the $H^{\frac{\alpha_1}{2}}_\alpha$-solution exists for all $t$. If $(p-1)(\frac{1}{\alpha_1}+\frac{1}{\alpha_2})>2$, there exists $E_1=E_1(p,\alpha_1,\alpha_2)>0$ and $\epsilon = \epsilon(p,\alpha_1,\alpha_2)$ such that whenever $\| u_0 \|_{H^{\frac{\alpha_1}{2}}_\alpha}< \epsilon$ and $0<E[u_0]\leq E_1$, the $H^{\frac{\alpha_1}{2}}_\alpha$-solution exists for all $t$.
\end{corollary}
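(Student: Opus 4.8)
The plan is the classical energy-space globalization scheme. By \cref{thm1} applied with $s=\tfrac{\alpha_1}{2}$ --- which, under the standing hypotheses ($p\ge 3$ and $s_c<\tfrac{\alpha_1}{2}$), lies in $(s_c,\lfloor p\rfloor-1]$ --- \cref{mixedNLS} is locally well-posed in $H^{\frac{\alpha_1}{2}}_\alpha$ with a time of existence depending on the data only through $\|u_0\|_{H^{\frac{\alpha_1}{2}}_\alpha}$; hence it suffices to establish an a priori bound $\sup_t\|u(t)\|_{H^{\frac{\alpha_1}{2}}_\alpha}\le C(u_0)$ on the maximal interval of existence and then iterate the local theory over time-steps of fixed length. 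Set $M:=M[u_0]$ and $X(t):=\|D_1^{\frac{\alpha_1}{2}}u(t)\|_{L^2}^2+\|D_2^{\frac{\alpha_2}{2}}u(t)\|_{L^2}^2$. Since $\alpha=\tfrac{2\alpha_2}{\alpha_1}$ gives $(\xi^2+|\eta|^\alpha)^{\frac{\alpha_1}{2}}\simeq|\xi|^{\alpha_1}+|\eta|^{\alpha_2}$, we have $X(t)\simeq\|u(t)\|_{\dot W^{\frac{\alpha_1}{2},2}_\alpha}^2$, and $t\mapsto X(t)$ is continuous because $u\in C_tH^{\frac{\alpha_1}{2}}_\alpha$. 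By conservation of mass and energy \cref{conservation}, $\|u(t)\|_{L^2}^2\equiv M$ and $\tfrac12 X(t)-\tfrac{1}{p+1}\|u(t)\|_{L^{p+1}}^{p+1}\equiv E[u_0]$.

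Next I would bound the nonlinear energy via \cref{GN inequality}, applied with Lebesgue exponents $2<p+1$ and smoothness $\tfrac{\alpha_1}{2}$; this yields $\theta=\big(\tfrac1{\alpha_1}+\tfrac1{\alpha_2}\big)\tfrac{p-1}{p+1}$, and the admissibility $\theta\in(0,1)$ is exactly the standing hypothesis $s_c<\tfrac{\alpha_1}{2}$. Raising the resulting inequality to the power $p+1$ and using $\|u(t)\|_{L^2}^2\equiv M$,
\begin{equation*}
\tfrac{1}{p+1}\|u(t)\|_{L^{p+1}}^{p+1}\le A\,X(t)^{a},\qquad a:=\tfrac{p-1}{2}\Big(\tfrac1{\alpha_1}+\tfrac1{\alpha_2}\Big),\quad A:=c_0M^{\gamma},\quad \gamma:=\tfrac{(1-\theta)(p+1)}{2}>0,
\end{equation*}
with $c_0=c_0(p,\alpha_1,\alpha_2)>0$. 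Combining with energy conservation, $g(X(t))\le E[u_0]$ for all $t$, where $g(X):=\tfrac12 X-AX^{a}$, and the same estimate at $t=0$ gives $g(X(0))\le E[u_0]$. The desired a priori bound then reduces to showing that $X(\cdot)$ never leaves the connected component of $\{X\ge 0:\,g(X)\le E[u_0]\}$ containing $X(0)$, and the trichotomy of the corollary matches the trichotomy of $a$ versus $1$, i.e.\ of $(p-1)\big(\tfrac1{\alpha_1}+\tfrac1{\alpha_2}\big)$ versus $2$.

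If $a<1$, then $g$ is coercive ($g(X)\to+\infty$), so $\{g\le E[u_0]\}$ is a bounded interval and $X(t)\le R(E[u_0],M,p,\alpha_1,\alpha_2)$ for all $t$; this gives global existence for arbitrary $u_0\in H^{\frac{\alpha_1}{2}}_\alpha$. If $a=1$, then $g(X)=(\tfrac12-A)X$, so picking $C=C(p,\alpha_1,\alpha_2)>0$ with $\|u_0\|_{L^2}<C\Rightarrow A<\tfrac12$ forces $E[u_0]\ge(\tfrac12-A)X(t)$, whence $X(t)\le 2E[u_0]/(1-2A)$. If $a>1$, then $g$ increases from $g(0)=0$ to a maximum $g(X^\ast)=\tfrac{a-1}{2a}X^\ast$ at $X^\ast=(2aA)^{-1/(a-1)}$ and decreases to $-\infty$ afterwards; since $A=c_0M^\gamma\to 0$ makes both $X^\ast$ and $g(X^\ast)$ tend to $+\infty$, one can fix $E_1,\epsilon>0$ depending only on $p,\alpha_1,\alpha_2$ so that $\|u_0\|_{H^{\frac{\alpha_1}{2}}_\alpha}<\epsilon$ and $0<E[u_0]\le E_1$ force both $X(0)<X^\ast$ (using smallness of $M$ and of $X(0)\lesssim\|u_0\|_{H^{\frac{\alpha_1}{2}}_\alpha}^2$) and $E[u_0]\le E_1<g(X^\ast)$. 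Then $\{g\le E[u_0]\}=[0,b_1]\cup[b_2,\infty)$ with $b_1<X^\ast<b_2$; since $g$ is increasing on $[0,X^\ast]$, the facts $g(X(0))\le E[u_0]$ and $X(0)<X^\ast$ place $X(0)\in[0,b_1]$, and continuity of $X(\cdot)$ forbids crossing the gap $(b_1,b_2)$, so $X(t)\le b_1<X^\ast$ for all $t$. In every case $\|u(t)\|_{H^{\frac{\alpha_1}{2}}_\alpha}^2\simeq M+X(t)$ is bounded uniformly in $t$, and iterating \cref{thm1} yields a global solution.

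The main obstacle is the bookkeeping in the supercritical case $a>1$: one has to verify that a single choice of thresholds $\epsilon,E_1$ depending only on $p,\alpha_1,\alpha_2$ simultaneously pushes $X(0)$ below the barrier $X^\ast$ and keeps $E[u_0]$ strictly below the barrier height $g(X^\ast)$, and that $X(0)$ thereby lands in the \emph{bounded} component of the sublevel set --- after which the continuity trapping is standard. The remaining ingredients --- the Gagliardo--Nirenberg bookkeeping, the coercivity of $g$ for $a<1$, the smallness-of-mass threshold for $a=1$, and the iteration of \cref{thm1} (which closes precisely because its existence time is governed solely by $\|u_0\|_{H^{\frac{\alpha_1}{2}}_\alpha}$) --- are routine.
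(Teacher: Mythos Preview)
Your proposal is correct and follows essentially the same approach as the paper: both use mass/energy conservation together with the anisotropic Gagliardo--Nirenberg inequality (\cref{GN inequality}) to derive a differential-inequality-type bound on the kinetic part $X(t)\simeq\||\nabla_\alpha|^{\alpha_1/2}u(t)\|_{L^2}^2$, and then split into three cases according to whether the exponent $a=\tfrac{p-1}{2}(\tfrac{1}{\alpha_1}+\tfrac{1}{\alpha_2})$ is below, equal to, or above $1$. Your formulation via the sublevel sets of $g(X)=\tfrac12 X-AX^a$ and the paper's formulation via the tangency of $x\mapsto C(E[u_0]+x^\gamma\|u_0\|_{L^2}^\delta)$ with the identity are equivalent ways of running the same continuity-trapping argument in the supercritical case, and your verification that $\theta\in(0,1)$ is equivalent to $s_c<\tfrac{\alpha_1}{2}$ matches the paper's standing hypothesis.
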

\begin{proof}
By the mass and energy conservation,
\begin{equation}\label{derivative growth}
\begin{split}
    \| |\nabla_\alpha|^{\frac{\alpha_1}{2}} u \|_{L^2}^2 &= 2 E[u_0] + \frac{2}{p+1}\| u \|_{L^{p+1}}^{p+1}\\
    &\leq C( E[u_0] +  \| |\nabla_\alpha|^{\frac{\alpha_1}{2}} u \|_{L^2}^{(p-1)(\frac{1}{\alpha_1}+\frac{1}{\alpha_2})}\| u_0 \|_{L^2}^{\delta}),
\end{split}
\end{equation}
where $C>0$ is by \cref{GN inequality} and $\delta>0$. If $(p-1)(\frac{1}{\alpha_1}+\frac{1}{\alpha_2})<2$, then $\| |\nabla_\alpha|^{\frac{\alpha_1}{2}}u(t)\|_{L^2}$ is bounded in time, and therefore the local theory for any $u_0 \in H^{\frac{\alpha_1}{2}}_\alpha$ can be iterated with uniform time steps. If $(p-1)(\frac{1}{\alpha_1}+\frac{1}{\alpha_2})=2$ and $\| u_0 \|_{L^2}$ is sufficiently small depending on $p,\alpha_1,\alpha_2$, then $\| |\nabla_\alpha|^{\frac{\alpha_1}{2}}u(t)\|_{L^2}$ is bounded in time, and hence the global existence of solution. Lastly assume $(p-1)(\frac{1}{\alpha_1}+\frac{1}{\alpha_2})>2$. Observe that \cref{derivative growth} is in the form
\begin{equation}\label{derivative growth2}
x \leq C( E[u_0]+ x^{\gamma} \| u_0 \|_{L^2}^{\delta}),
\end{equation}
where $x = x(t) = \| |\nabla_\alpha|^{\frac{\alpha_1}{2}} u(t) \|_{L^2}^2$ and $\gamma>1\: ,\delta>0$. Let $f(x,y) = C(y+x^\gamma \| u_0 \|_{L^2}^{\delta})$. By direct computation, $f$, as a function of $x$, coincides tangentially with the identity at
\begin{equation*}
(x_0,y_0) = \Big((C\gamma)^{-\frac{1}{\gamma-1}}\| u_0 \|_{L^2}^{-\frac{\delta}{\gamma-1}},(C\gamma)^{-\frac{\gamma}{\gamma-1}}(\gamma-1)\| u_0 \|_{L^2}^{-\frac{\delta}{\gamma-1}}\Big).    
\end{equation*}
Let $\epsilon = (C\gamma)^{-\frac{1}{(\gamma-1)(2+\frac{\delta}{\gamma-1})}}$ and suppose $\| u_0 \|_{L^2}, \| |\nabla_\alpha|^{\frac{\alpha_1}{2}} u_0\|_{L^2}<\epsilon$. The smallness assumption of $u_0$ implies 
\begin{equation*}
    x(0)=\| |\nabla_\alpha|^{\frac{\alpha_1}{2}} u_0\|_{L^2}^2 < \epsilon^2<x_0,
\end{equation*}
and moreover
\begin{equation*}
    y_0 = (C\gamma)^{-1}(\gamma-1)x_0 > (C\gamma)^{-1}(\gamma-1) \epsilon^2 =: E_1(p,\alpha).
\end{equation*}
Hence the derivative $\| |\nabla_\alpha|^{\frac{\alpha_1}{2}} u(t)\|_{L^2}$ is bounded in $t$.
\end{proof}
\section{Regularity in the Dispersion Parameter.}\label{alpha regularity}
This section is devoted to the proofs of \cref{limit local,limit global}.
\begin{proof}[Proof of \cref{limit local}]
Denote $U_\alpha(t) = e^{-it(|\xi|^{\alpha_1}+|\eta|^{\alpha_2})}$ and similarly for $U_{\alpha^\prime}(t)$. Let $\sigma = \sigma(\xi,\eta;s,\alpha) = (1+\xi^2+|\eta|^\alpha)^{\frac{s}{2}}$. Since $U_\alpha(t),U_{\alpha^\prime}(t)$ are isometries on $H^s_\alpha$ and by the algebra property of $H^s_\alpha$ (see \cref{sobolev algebra}), \cref{leibniz}, and the Mean Value Theorem, we have
\begin{equation}\label{algebra}
    \| |u|^{p-1}u - |v|^{p-1}v \|_{H^s_\alpha} \lesssim_{p,s,\alpha} (\| u \|_{L^\infty}^{p-1}+\| v \|_{L^\infty}^{p-1})\| u-v \|_{H^s_\alpha}.
\end{equation}
It thus follows from the standard semi-group theory that \cref{continuity} is locally well-posed in $H^s_\alpha$. 

Let $\epsilon>0$ and consider the difference equation:
\begin{equation*}
    u^\alpha(t) - u^{\alpha^\prime}(t) = U_{\alpha}(t)u_{0,\alpha} - U_{\alpha^\prime}(t)u_{0,\alpha^\prime} - i\mu \int_0^t \Big(U_\alpha(t-t^\prime)(|u^\alpha(t^\prime)|^{p-1}u^\alpha(t^\prime))-U_{\alpha^\prime}(t-t^\prime)(|u^{\alpha^\prime}(t^\prime)|^{p-1}u^{\alpha^\prime}(t^\prime)) \Big)dt^\prime.
\end{equation*}
The linear contribution can be estimated above within $\epsilon$ for $|\alpha_1-\alpha_1^\prime|+|\alpha_2 - \alpha_2^\prime|$ sufficiently small. By the triangle inequality,
\begin{equation*}
\begin{split}
 \| U_{\alpha}(t)u_{0,\alpha} - U_{\alpha^\prime}(t)u_{0,\alpha^\prime} \|_{H^s_\alpha} &\leq \|(U_\alpha(t)-U_{\alpha^\prime}(t))u_{0,\alpha} \|_{H^s_\alpha} + \|U_{\alpha^\prime}(t)(u_{0,\alpha}-u_{0,\alpha^\prime}) \|_{H^s_\alpha}\\
 &= \|(U_\alpha(t)-U_{\alpha^\prime}(t))u_{0,\alpha} \|_{H^s_\alpha} + \|u_{0,\alpha}-u_{0,\alpha^\prime} \|_{H^s_\alpha}.
\end{split}
\end{equation*}
Note that
\begin{equation*}
\|(U_\alpha(t)-U_{\alpha^\prime}(t))u_{0,\alpha} \|_{H^s_\alpha}\leq T \| w(\xi,\eta)\sigma \widehat{u_{0,\alpha}}\|_{L^2(K)} + 2 \| \sigma \widehat{u_{0,\alpha}}\|_{L^2(K^c)}  
\end{equation*}
where $w(\xi,\eta)=\left||\xi|^{\alpha_1}-|\xi|^{\alpha_1^\prime}\right|+\left||\eta|^{\alpha_2}-|\eta|^{\alpha_2^\prime}\right|$ and $K \subseteq \mathbb{R}^2$ is taken sufficiently big depending on $\epsilon$ and $u_{0,\alpha}$.

The nonlinear contribution is estimated above by
\begin{equation*}
\begin{split}
    \int_0^t \|(U_\alpha(t-t^\prime)-U_{\alpha^\prime}(t-t^\prime))(|u^\alpha(t^\prime)|^{p-1}u^\alpha(t^\prime)) \|_{H^s_\alpha} dt^\prime + \int_0^t \| |u^\alpha(t^\prime)|^{p-1}u^\alpha(t^\prime)-|u^{\alpha^\prime}(t^\prime)|^{p-1}u^{\alpha^\prime}(t^\prime)\|_{H^s_\alpha} dt^\prime\\
    :=I+II.    
\end{split}
\end{equation*}
Observing that $|u^\alpha(t^\prime)|^{p-1} u^\alpha(t^\prime) \in C([0,T];H^s_\alpha)$ by the algebra property of $H^s_\alpha$, if $U_{\alpha^\prime}(t)f(t) \xrightarrow[]{} U_\alpha(t)f(t)$, as $(\alpha_1^\prime,\alpha_2^\prime)\to (\alpha_1,\alpha_2)$, in $H^s_\alpha$ uniformly in $[0,T]$ for every $f \in C([0,T];H^s_\alpha)$, then $I \leq \epsilon T$ for $|\alpha_1 - \alpha_1^\prime|+|\alpha_2 - \alpha_2^\prime|$ sufficiently small depending on $T$ and $f=|u^\alpha|^{p-1}u^\alpha$. To show this hypothesis, we have 
\begin{equation*}
\begin{split}
    \| (U_\alpha(t)-U_{\alpha^\prime}(t))f(t)\|_{H^s_\alpha}^2 &\leq 2 \iint \Big(|e^{-it|\xi|^{\alpha_1}}-e^{-it|\xi|^{\alpha_1^\prime}}|^2+ |e^{-it|\eta|^{\alpha_2}}-e^{-it|\eta|^{\alpha_2^\prime}}|^2\Big) \sigma^2 |\widehat{f(t)}|^2d\xi d\eta\\
    &=4 \iint \Big((1-\cos{t(|\xi|^{\alpha_1} - |\xi|^{\alpha_1^\prime})})+(1-\cos{t(|\eta|^{\alpha_2} - |\eta|^{\alpha_2^\prime})})\Big)\sigma^2 |\widehat{f(t)}|^2 d\xi d\eta.
\end{split}
\end{equation*}
It suffices to show
\begin{equation*}
    \lim\limits_{\alpha_1^\prime \rightarrow \alpha_1}\iint (1-\cos{t(|\xi|^{\alpha_1} - |\xi|^{\alpha_1^\prime})})\sigma^2 |\widehat{f(t)}|^2 d\xi d\eta =: \lim\limits_{\alpha_1^\prime \rightarrow \alpha_1} III = 0,
\end{equation*}
uniformly in $t \in [0,T]$. For $n \in \mathbb{N}$, define $\xi_n = \xi_n(\alpha_1^\prime)$ to be the first positive $\xi$ such that
\begin{equation*}
1-\cos{T(|\xi|^{\alpha_1} - |\xi|^{\alpha_1^\prime})} = \frac{1}{n}.    
\end{equation*}
Define $S_n = [-\xi_n,\xi_n]^c \times \mathbb{R}_\eta \subseteq \mathbb{R}^2$. Then,
\begin{equation*}
\begin{split}
    III = \iint_{S_n^c} + \iint_{S_n}\leq \frac{\| f \|_{L^\infty_{t^\prime\in [0,T]}H^s_\alpha}}{n} + 2 \iint_{S_n} \sigma^2 |\widehat{f(t)}|^2 d\xi d\eta.    
\end{split}
\end{equation*}
Fix $N \in \mathbb{N}$ such that the first term above with $n=N$ is bounded above by $\epsilon$. To show that the second term is also bounded above by $\epsilon$ uniformly in $t^\prime \in [0,T]$, define $F_{\alpha_1^\prime}(t) = \iint_{S_N} \sigma^2 |\widehat{f(t)}|^2 d\xi d\eta$. Since
\begin{equation*}
\begin{split}
    |F_{\alpha_1^\prime}(t_1)-F_{\alpha_1^\prime}(t_2)| &\leq \iint_{S_N} \sigma^2 |\widehat{f(t_1)}-\widehat{f(t_2)}|\cdot (|\widehat{f(t_1)}|+|\widehat{f(t_2)}|) d\xi d\eta\\
    &\leq 2 \| f \|_{L^\infty_{t^\prime\in [0,T]}H^s_\alpha} \| f(t_1)-f(t_2) \|_{H^s_\alpha},
\end{split}
\end{equation*}
and $\lim \limits_{\alpha_1^\prime \to \alpha_1}F_{\alpha_1^\prime}(t)=0$ pointwise by the Dominated Convergence Theorem, the claim follows by the Arzel\`a-Ascoli Theorem.

As for the term $II$, the argument leading to \cref{algebra} yields
\begin{equation*}
\begin{split}
    \| |u^\alpha|^{p-1}u^\alpha - |u^{\alpha^\prime}|^{p-1}u^{\alpha^\prime} \|_{H^s_\alpha} \lesssim (\| u^\alpha \|_{H^s_\alpha}^{p-1} + \| u^{\alpha^\prime} \|_{H^s_\alpha}^{p-1}) \| u^\alpha-u^{\alpha^\prime} \|_{H^s_\alpha}.
\end{split}
\end{equation*}
By the H\"older's inequality,
\begin{equation*}
\begin{split}
    II &\lesssim (\| u^\alpha \|_{L^\infty_{t^\prime\in [0,T]}H^s_\alpha}^{p-1}+\| u^{\alpha^\prime} \|_{L^\infty_{t^\prime\in [0,T]}H^s_\alpha}^{p-1})\int_0^t \| u^\alpha(t^\prime) - u^{\alpha^\prime}(t^\prime) \|_{H^s_\alpha} dt^\prime\\
    &\lesssim \| u_{0,\alpha} \|_{H^s_\alpha}^{p-1} exp\Big(C(\alpha)(p-1)\| u^\alpha \|_{L^{p-1}_{t^\prime} L^\infty([0,T]\times \mathbb{R}^2)}^{p-1}\Big)\int_0^t \| u^\alpha(t^\prime) - u^{\alpha^\prime}(t^\prime) \|_{H^s_\alpha} dt^\prime,    
\end{split}
\end{equation*}
for all $\alpha_1^\prime,\alpha_2^\prime$ with $|\alpha_1 - \alpha_1^\prime|+|\alpha_2 - \alpha_2^\prime|$ sufficiently small where the last inequality follows from applying the Gronwall's inequality to \cref{uniform bound} with $C(\alpha)>0$ from \cref{nonlin.est.}.

Therefore for every $\epsilon>0$, there exists $\delta>0$ that depends on $\epsilon, s,p,\alpha_1,\alpha_2,T,u_{0,\alpha}$ such that whenever $|\alpha_1 - \alpha_1^\prime|+|\alpha_2 - \alpha_2^\prime|<\delta$, it follows that
\begin{equation*}
    \| u^\alpha(t)-u^{\alpha^\prime}(t) \|_{H^s_\alpha} \lesssim \epsilon(1+T) + \| u_{0,\alpha} \|_{H^s_\alpha}^{p-1} \int_0^t \| u^\alpha(t^\prime) - u^{\alpha^\prime}(t^\prime) \|_{H^s_\alpha} dt^\prime,
\end{equation*}
and the desired claim follows from the Gronwall's inequality.
\end{proof}
While energy estimates in Sobolev algebras on a compact time interval is sufficient to keep $u^\alpha(t)-u^{\alpha^\prime}(t)$ small for $|\alpha_1-\alpha_1^\prime|+|\alpha_2-\alpha_2^\prime|$ small, this method based on the Gronwall's inequality fails to capture the oscillatory nature of dispersive phenomena, and thus it is unlikely that the approach in \cref{limit local} would extend to $T=\infty$ uniformly in time.

In the following discussion, let $\alpha_1 = 2$ for simplicity, and so $\alpha = \frac{2\alpha_2}{\alpha_1} = \alpha_2$. We utilize \cref{thm2} to illustrate the pivotal role played by phase decoherence in the long-time dynamics of \cref{mixedNLS} for different dispersive parameters. More precisely, there exists a non-trivial datum $\phi$ such that $u^{\alpha^\prime}[\phi](t)$ does not converge to $u^\alpha[\phi](t)$ uniformly in $t\in [0,\infty)$ as $\alpha^\prime \to \alpha$ where the notations $u^\alpha,u^{\alpha^\prime}$ are as \cref{continuity}. For $\alpha=2,\: p=3$, note that the smallness condition (formally) given by \cref{thm2} is in $L^2$, which is consistent with \cite[Theorem A]{weinstein1982nonlinear}.
\begin{proposition}\label{limit global}
Assume the hypotheses of \cref{thm2}. Then there exist $R=R(\alpha)>0$ and $\phi \in \mathcal{S}\setminus \{0\}$ such that whenever $\alpha^\prime\in (0,2]\setminus \{1\}$ satisfies $|\alpha^\prime - \alpha |\leq R$, $u^{\alpha^\prime}[\phi]$ exists globally in time and scatters to free solutions in $L^2$. Furthermore,
\begin{equation}\label{discontinuity}
\lim\limits_{\alpha^\prime\to\alpha} \|u^\alpha-u^{\alpha^\prime} \|_{L^\infty_t L^2(\mathbb{R}\times \mathbb{R}^2)} >0.
\end{equation}
\end{proposition}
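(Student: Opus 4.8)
The plan is to reduce \cref{limit global} to a \emph{phase decoherence} estimate for the free flows, plus a small nonlinear correction. Since here $\alpha_1=\alpha_1'=2$, the change of parameter only affects the $\eta$-frequencies, and the two group velocities $|\eta|^{\alpha}$, $|\eta|^{\alpha'}$, although pointwise close, accumulate an $O(1)$ relative phase on the long time scale $t\sim|\alpha-\alpha'|^{-1}$. The global existence and scattering claims are immediate from \cref{thm2} once one notes that its smallness threshold and the constants in \cref{strichartz4,disp est4} depend continuously on $(\alpha_1',\alpha_2')$ and can therefore be chosen uniformly for $\alpha'$ in a small closed ball around $\alpha$ avoiding the excluded value $1$ (possible since $\alpha\neq1$); for $\phi$ small in this uniform sense, $u^{\alpha'}[\phi]$ is global and scatters in $H^{s_c}_{\alpha'}\hookrightarrow L^2$ (note $s_c=s_c(2,\alpha')=\tfrac12+\tfrac1{\alpha'}-\tfrac2{p-1}>0$).

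First I would record, from Duhamel's formula and the a priori bound produced by the contraction scheme in the proof of \cref{thm2}, that $\|u^{\alpha'}[\phi](t)-U_{\alpha'}(t)\phi\|_{L^\infty_tL^2}\lesssim\|\phi\|_{H^{s_c}_{\alpha'}}^{p}$ with a constant uniform near $\alpha$, and the same at $\alpha$. The triangle inequality then gives
\begin{equation*}
\|u^\alpha-u^{\alpha'}\|_{L^\infty_tL^2}\;\geq\;\sup_{t\in\mathbb{R}}\big\|(U_\alpha(t)-U_{\alpha'}(t))\phi\big\|_{L^2}\;-\;C\big(\|\phi\|_{H^{s_c}_{\alpha}}^{p}+\|\phi\|_{H^{s_c}_{\alpha'}}^{p}\big),
\end{equation*}
so everything reduces to bounding the first term from below, uniformly for $\alpha'$ near $\alpha$. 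For this I would choose $\phi\in\mathcal{S}\setminus\{0\}$ with $\widehat\phi$ supported in a \emph{thin} annular slab $\{|\eta|\in[a,b]\}$, $1<a<b$ with $b/a$ close to $1$. By Plancherel, $\|(U_\alpha(t)-U_{\alpha'}(t))\phi\|_{L^2}^2=2\iint(1-\cos(t\,g(\eta)))\,|\widehat\phi|^2$ with $g(\eta)=|\eta|^{\alpha}-|\eta|^{\alpha'}$, which has constant sign and is bounded away from $0$ on $[a,b]$. Evaluating at $t_{\alpha'}:=\pi/g(a)$ (which diverges as $\alpha'\to\alpha$), the phase becomes $\pi\,g(\eta)/g(a)$; since $g(\eta)/g(a)\to|\eta|^{\alpha}\ln|\eta|/(a^{\alpha}\ln a)$ uniformly on $[a,b]$, taking $b$ close enough to $a$ keeps this ratio in $[1-\epsilon,1+\epsilon]$ for all $\alpha'$ in some radius $R$ of $\alpha$, whence (for $\epsilon$ small) $1-\cos(t_{\alpha'}g(\eta))\geq\tfrac32$ on the whole support, and so $\sup_{t\in\mathbb{R}}\|(U_\alpha(t)-U_{\alpha'}(t))\phi\|_{L^2}^2\geq3\|\phi\|_{L^2}^2$.

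Finally I would close by scaling: replacing $\phi$ by $\lambda\phi$ scales the decoherence lower bound like $\lambda$ and the nonlinear errors like $\lambda^{p}$ with $p>1$, so for $\lambda$ small the former dominates while $\|\lambda\phi\|_{H^{s_c}_{\alpha'}}$ remains below the uniform scattering threshold; this produces $c=c(\alpha,p)>0$ with $\|u^\alpha-u^{\alpha'}\|_{L^\infty_tL^2}\geq c$ for all $|\alpha'-\alpha|\leq R$, which yields \cref{discontinuity} (the limit inferior as $\alpha'\to\alpha$ is at least $c$). The main obstacle is the decoherence estimate itself — isolating the right thin-annulus datum and the diverging time $t_{\alpha'}\sim|\alpha-\alpha'|^{-1}$ at which the relative phase is essentially $\pi$ across the entire Fourier support; this is precisely the long-time mechanism obstructing an extension of \cref{limit local} to $T=\infty$, and it already occurs for the linear flow. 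A secondary but routine point is the uniformity in $\alpha'$ of the constants and threshold inherited from \cref{thm2}.
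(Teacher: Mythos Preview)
Your argument is correct and genuinely different from the paper's. The paper passes to the \emph{scattering states} $u_+(\alpha),u_+(\alpha')$ and proves that the cross-correlation $\langle U_\alpha(t)u_+(\alpha),U_{\alpha'}(t)u_+(\alpha')\rangle\to 0$ as $t\to\infty$ by oscillatory-integral methods (Van der Corput and stationary phase applied to the phase $|\eta|^\alpha-|\eta|^{\alpha'}$), which together with $\|u_+(\alpha)\|_{L^2}=\|u_+(\alpha')\|_{L^2}=\|\phi\|_{L^2}$ forces $\|U_\alpha(t)u_+(\alpha)-U_{\alpha'}(t)u_+(\alpha')\|_{L^2}\to\sqrt{2}\,\|\phi\|_{L^2}$. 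You instead bound the Duhamel remainder $\|u^{\alpha'}(t)-U_{\alpha'}(t)\phi\|_{L^\infty_tL^2}$ by $O(\|\phi\|^p)$ uniformly near $\alpha$ and produce an \emph{explicit} thin-annulus datum together with a diverging time $t_{\alpha'}=\pi/g(a)$ at which the relative phase $t_{\alpha'}g(\eta)$ is uniformly close to $\pi$ on the Fourier support; the amplitude scaling $\phi\mapsto\lambda\phi$ then beats the $\lambda^p$ nonlinear error. Your route is more elementary (no stationary phase, no appeal to the asymptotic states), fully constructive, and yields a quantitative lower bound $c(\alpha,p)$; the paper's route is less explicit but works for \emph{any} sufficiently small $\phi$ without the thin-support restriction, and it ties the phenomenon directly to the scattering picture. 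Both arguments rely on the same preliminary point---local stability in $\alpha'$ of the Strichartz constants and of the small-data threshold from \cref{thm2}---which you handle correctly.
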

\begin{proof}
For $\alpha \in (0,2)\setminus \{1\}$, the implicit constants obtained in \cref{Bernstein,Bernstein5,Bernstein3,nonlin.est.,disp est3,disp est4}, call them $C(\alpha)$, are locally stable in $\alpha$ in the sense that there exists $R=R(\alpha)>0$ sufficiently small such that if $|\alpha- \alpha^\prime|\leq R$, then $C(\alpha^\prime) \in [\frac{C(\alpha)}{2},2C(\alpha)]$; this $R>0$ cannot be taken arbitrarily large, since certain estimates (for example, see \cref{critpt,critpt2}) blow up as $\alpha^\prime$ tends to zero or one. For $\alpha=2$, the same conclusion holds observing that $\varlimsup\limits_{\alpha^\prime \rightarrow 2-} C(\alpha^\prime)<\infty$ and that the aforementioned lemmas with $\alpha=2$ hold. The local stability of implicit constants implies that if $\alpha \in (0,2]\setminus \{1\}$, then there exists $\delta_\alpha>0$ such that whenever $\sup\limits_{\alpha^\prime: |\alpha-\alpha^\prime|\leq R}\| \phi \|_{H^{s_c(\alpha^\prime)}_{\alpha^\prime}} \leq \delta_\alpha$, solutions to \cref{mixedNLS}, with the dispersion parameter $(\alpha_1,\alpha_2) = (2,\alpha^\prime)$ and the datum $\phi$, exist globally in time and scatter to free solutions in $H^{s_c(\alpha^\prime)}_{\alpha^\prime}$, and therefore, in $L^2$. Let $0<|\alpha - \alpha^\prime|\leq R$ and denote $u_+(\alpha),u_+(\alpha^\prime) \in L^2$ by the corresponding asymptotic states. Then,
\begin{equation*}
    \| u^\alpha(t) - U_\alpha(t)u_+(\alpha) \|_{L^2},\: \| u^{\alpha^\prime}(t) - U_{\alpha^\prime}(t)u_+(\alpha^\prime) \|_{L^2} \xrightarrow[t\rightarrow \infty]{} 0,
\end{equation*}
where $\| u_+(\alpha) \|_{L^2} = \| u_+(\alpha^\prime) \|_{L^2} = \| \phi \|_{L^2}$ by the mass conservation. By the triangle inequality,
\begin{equation*}
    \| u^\alpha(t) - u^{\alpha^\prime}(t) \|_{L^2} \geq \| U_\alpha(t)u_+(\alpha) - U_{\alpha^\prime}(t)u_+(\alpha^\prime) \|_{L^2}-( \|u^\alpha(t) - U_\alpha(t)u_+(\alpha) \|_{L^2}+ \|u^{\alpha^\prime}(t)-U_{\alpha^\prime}(t)u_+(\alpha^\prime) \|_{L^2}).
\end{equation*}
From the identity
\begin{equation*}
    \| U_\alpha(t)u_+(\alpha) - U_{\alpha^\prime}(t)u_+(\alpha^\prime) \|_{L^2}^2 = 2\| \phi \|_{L^2}^2 - 2 Re \langle U_\alpha(t)u_+(\alpha),U_{\alpha^\prime}(t)u_+(\alpha^\prime)\rangle,
\end{equation*}
\cref{discontinuity} follows if 
\begin{equation*}
\lim\limits_{t\rightarrow\infty}\langle U_\alpha(t)u_+(\alpha),U_{\alpha^\prime}(t)u_+(\alpha^\prime)\rangle=\lim\limits_{t\rightarrow\infty}\iint e^{-it(|\eta|^\alpha-|\eta|^{\alpha^\prime})} \widehat{u_+(\alpha)}\cdot\overline{\widehat{u_+(\alpha^\prime)}}d\xi d\eta = 0,    
\end{equation*}
which in turn follows from
\begin{equation*}
    \lim\limits_{t\rightarrow\infty} I(t):=\lim\limits_{t\rightarrow\infty} \int e^{-it(|\eta|^\alpha-|\eta|^{\alpha^\prime})} f(\eta)d\eta=0,
\end{equation*}
for all $f \in C^\infty_c(\mathbb{R})$ by density. Let $\phi(\eta) = |\eta|^\alpha - |\eta|^{\alpha^\prime}$. Since
\begin{equation}\label{phase}
    \phi^\prime(\eta) = sgn(\eta)(\alpha |\eta|^{\alpha-1} - \alpha^\prime |\eta|^{\alpha^\prime-1}),\: \phi^{\prime\prime}(\eta) = \alpha (\alpha-1) |\eta|^{\alpha-2} - \alpha^\prime (\alpha^\prime-1) |\eta|^{\alpha^\prime-2},
\end{equation}
the critical points are $0,\pm \eta_c$ where $\eta_c := (\frac{\alpha^\prime}{\alpha})^{\frac{1}{\alpha-\alpha^\prime}}$; note that $\phi^{\prime\prime}(0)$ is undefined whereas $\phi^{\prime\prime}(\eta_c) \neq 0$ for every $\alpha^\prime \neq \alpha$. From \cref{phase}, there exists $\nu = \nu(\alpha,\alpha^\prime) \in (0,\frac{\eta_c}{2})$ such that $\sup\limits_{\eta \in [0,\nu]} |\phi^{\prime\prime}(\eta)|\geq 1$. Define smooth bump functions taking values in $[0,1]$ as follows:
\begin{equation*}
\begin{split}
\psi_1(\eta) &=
\begin{cases*}
      1 &, $\eta \in [0,\frac{\nu}{2}]$\\
      0 &, $\eta \in (\nu,\infty)$,
    \end{cases*}\\
\psi_2(\eta) &=
\begin{cases*}
      1 &, $\eta \in [\nu,2\eta_c]$\\
      0 &, $\eta \in [0,\frac{\nu}{2})\cup (4\eta_c,\infty)$\\
      1-\psi_1(\eta) &, $\eta \in [\frac{\nu}{2},\nu]$.
    \end{cases*} 
\end{split}
\end{equation*}
By the Van der Corput Lemma\footnote{Though $\phi$ is not smooth at the origin, the lemma holds for our particular phase function.},
\begin{equation*}
    \left|\int_0^\nu e^{-it\phi(\eta)}\psi_1(\eta)f(\eta)d\eta\right| \lesssim_{\alpha,\alpha^{\prime}} |t|^{-\frac{1}{2}}.
\end{equation*}
By the method of stationary phase,
\begin{equation*}
    \left|\int_{\frac{\nu}{2}}^{4\eta_c} e^{-it\phi(\eta)}\psi_2(\eta)f(\eta)d\eta\right|  \lesssim_{\alpha,\alpha^{\prime}} |t|^{-\frac{1}{2}}.
\end{equation*}
Lastly by the method of non-stationary phase,
\begin{equation*}
    \left|\int_{0}^{\infty} e^{-it\phi(\eta)}(1-\psi_1(\eta)-\psi_2(\eta))f(\eta)d\eta\right|  \lesssim_{\alpha,\alpha^{\prime},k} |t|^{-k},\:\text{for all}\: k\in \mathbb{N}.
\end{equation*}
This shows $\int_0^\infty e^{-it\phi(\eta)}f(\eta)d\eta\xrightarrow[t\rightarrow\infty]{}0$ and the integral on $(-\infty,0]$ can be shown similarly by the change of variable $\eta \mapsto -\eta$.
\end{proof}
\begin{remark}\label{singular limit}
In \cref{mixedNLS}, certain regimes of dispersive parameters are of interest. Our study contains $\alpha_1 = \alpha_2$ as a special case. Since the non-dispersive solutions ($\alpha_1 = \alpha_2 =1$) do not exhibit small-data scattering (see \cite{krieger2013nondispersive}) whereas dispersive solutions for $\alpha_1 = \alpha_2 \neq 1$ do (see \cite{1534-0392_2015_6_2265}), it is of interest to ask the same question when $\alpha_1 =1,\:\alpha_2 \neq 1$. It is also of interest whether the ODE limit $(\alpha_1,\alpha_2) \rightarrow (0,0)$ reveals any interesting features of \cref{mixedNLS} for small $\alpha_i$. Lastly we remark that although the implicit constants of the Strichartz estimates possibly blow up as $\alpha_i \rightarrow 0$ or $1$ (for example, see \cref{critpt,critpt2}), the measure of non-locality (or the loss of derivatives) measured by $\beta_i = 1-\frac{\alpha_i}{2}$ is smooth in $\alpha_i$.
\end{remark}
\section{Conclusions.}
To establish the local well-posedness theory of mNLSE, the functional framework that respects the spatially-anisotropic scaling symmetry was developed. In doing so, the standard Littlewood-Paley theory based on smooth projections was extended to non-smooth projections. It is of interest to study the long-time and blow-up dynamics of mNLSE corresponding to large data. It is also of interest how our work ties back to applications to nonlinear optics and photonics. 

\section{Acknowledgements.}
Both authors work is supported by the U.S. National
Science Foundation under the grants DMS-1909559 and RTG, DMS-1840260.
\appendix
\section{Appendix}\label{appendix}
\begin{proof}[Proof of \cref{Bernstein3}]
Define a smooth bump function $\zeta \in [0,1]$ that is identically one on $supp(\phi_1)$ and compactly supported in the $\delta$-neighborhood of $supp(\phi_1)$ for small $\delta>0$ so that $supp(\zeta) \subseteq B(0,\epsilon)^c$ for some $\epsilon>0$. Define $\zeta_N(\xi,\eta) = \zeta\Big(\frac{\xi}{N},\frac{\eta}{N^{2/\alpha}}\Big)$ and denote $f = P_N u$. Since $\widehat{f} = \zeta_N \widehat{f}$, we have $f = N^{1+\frac{2}{\alpha}}\check{\zeta}(Nx,N^{\frac{2}{\alpha}}y)\ast f$. Hence by the Young's inequality and chain rule,
\begin{equation*}
    \| |\nabla_\alpha|^{s} f\|_{L^r} = N^{1+\frac{2}{\alpha}+s}\| (|\nabla_\alpha|^s \check{\zeta})(Nx,N^{\frac{2}{\alpha}}y) \ast f \|_{L^r} \leq N^s\| |\nabla_\alpha|^s \check{\zeta}\|_{L^1}\| f \|_{L^r}.  
\end{equation*}
Since $(\xi^2 + |\eta|^\alpha)^{\frac{s}{2}}\zeta$ has a compact support on which it is smooth in $\xi$ and sufficiently regular in $\eta$, $|\nabla_\alpha|^s \check{\zeta}$ obeys an estimate of the form \cref{decay1,decay2}, and thus $\| |\nabla_\alpha|^s \check{\zeta}\|_{L^1} \lesssim_s 1$. Conversely,
\begin{equation*}
    \| P_N u \|_{L^r} = \| |\nabla_\alpha|^{-s}|\nabla_\alpha|^{s} P_N u \|_{L^r} \lesssim N^{-s} \| |\nabla_\alpha|^s P_N u \|_{L^r}.
\end{equation*}

Similarly, define $\Tilde{\zeta}$ to be a smooth bump function identically one on $\{|\xi_i| \in [\frac{1}{2},2]\}\subseteq \mathbb{R}$ and supported in $\{|\xi_i| \in [\frac{1}{4},4]\}$. Let $\Tilde{\zeta}_{N_i}(\xi_1,\xi_2) = \Tilde{\zeta}(\frac{\xi_i}{N_i})$. By arguing as above, we obtain the second estimate.
\end{proof}
\begin{proof}[Proof of \cref{BesselPotential}]
The claim is trivial for $s=0$, and so assume $s>0$. From the definition of Gamma function,
\begin{equation*}
    \lambda^{-\frac{s}{2}} = \Gamma(\frac{s}{2})^{-1} \int_0^\infty e^{-t\lambda} t^{\frac{s}{2}-1}dt,
\end{equation*}
for $\lambda>0$. For $\lambda =1+\xi^2+|\eta|^\alpha$,
\begin{equation*}
    (1+\xi^2+|\eta|^\alpha)^{-\frac{s}{2}} = \Gamma(\frac{s}{2})^{-1}\int_0^\infty e^{-t} t^{\frac{s}{2}-1}e^{-t\xi^2}e^{-t|\eta|^\alpha}dt.
\end{equation*}
Since it is known that the inverse Fourier transform of $e^{-|\eta|^\alpha}$ for $0<\alpha\leq 2$ is non-negative, we conclude $G_s(x,y) := \mathcal{F}^{-1}[(1+\xi^2+|\eta|^\alpha)^{-\frac{s}{2}}](x,y) \geq 0$, and therefore
\begin{equation*}
\| G_s \|_{L^1} = \iint G_s(x,y)dxdy = \widehat{G_s}(0,0) = 1,
\end{equation*}
and the desired estimate follows from the Young's inequality.

To show completeness, the claim is immediate for $r=2$ by the Plancherel Theorem. For $r \neq 2$, if $\{f_n\}_{n=1}^\infty$ is a sequence such that $\|f_n - f_m\|_{W_\alpha^{s,r}}\xrightarrow[n,m\to \infty]{}0$, then there exists $F \in L^r$ such that $\langle \nabla_\alpha \rangle^s f_n \xrightarrow[n\to \infty]{L^r} F$. Since $\langle \nabla_\alpha \rangle^{-s} F \in L^r\subseteq \mathcal{S}^\prime$ by \cref{bessel potential}, the Cauchy sequence converges to $\langle \nabla_\alpha \rangle^{-s} F$ in $W_\alpha^{s,r}$.
\end{proof}
It is known in \cite[Proposition 1]{cho2011remarks} that the frequency-localized analogue of \cref{disp est3} holds for $\alpha_1 = \alpha_2$. When $\alpha=2$, the Strichartz estimates (\cref{strichartz4,disp est4}) follows immediately since the embedding $B^0_{r,2} \hookrightarrow L^r$ is bounded for $r \geq 2$. When $\alpha<2$, however, it remains to show that such Besov refinement remains true. Instead we directly show the analogue of \cite[Proposition 1]{cho2011remarks} by the standard oscillatory phase argument where the integral is on the entire Fourier domain. More generally, it is of independent interest whether the dispersive estimates in \cite{cho2011remarks} could be extended to non-radial dispersion relations.
\begin{lemma}\label{disp est3}
Let $\mu \in \mathbb{R}$. There exists $C = C(\alpha_1,\alpha_2)>0$ such that
\begin{equation}\label{disp est1}
    \lVert D_1^{-\beta_1(1+i\mu)}D_2^{-\beta_2(1+i\mu)}U(t)f \rVert_{L^\infty(\mathbb{R}^2)} \leq C |t|^{-1}\| f \|_{L^1(\mathbb{R}^2)}.
\end{equation}
\end{lemma}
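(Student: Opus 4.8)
The plan is to prove the fixed-time dispersive estimate \eqref{disp est1} by writing $D_1^{-\beta_1(1+i\mu)}D_2^{-\beta_2(1+i\mu)}U(t)f$ as convolution with a kernel $K_t(x,y)$, and then showing the uniform bound $\|K_t\|_{L^\infty} \lesssim |t|^{-1}$, after which Young's inequality gives \eqref{disp est1}. The kernel is the oscillatory integral
\[
K_t(x,y) = \iint e^{-it(|\xi|^{\alpha_1}+|\eta|^{\alpha_2}) + i(x\xi + y\eta)} |\xi|^{-\beta_1(1+i\mu)} |\eta|^{-\beta_2(1+i\mu)} \, d\xi\, d\eta.
\]
Because the phase and the amplitude both factor as a product of a $\xi$-piece and an $\eta$-piece, the integral splits: $K_t(x,y) = K_t^{(1)}(x) \cdot K_t^{(2)}(y)$, where $K_t^{(i)}(z) = \int e^{-it|\zeta|^{\alpha_i} + iz\zeta} |\zeta|^{-\beta_i(1+i\mu)}\, d\zeta$. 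So it suffices to prove the one-dimensional bound $\sup_{z}|K_t^{(i)}(z)| \lesssim |t|^{-1/2}$ for each $i$, with a constant depending on $\alpha_i$ (and harmlessly on $\mu$, since $|\zeta|^{-i\mu\beta_i}$ is a bounded factor that contributes only lower-order terms to amplitude derivatives). Multiplying the two gives $|t|^{-1}$.

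For the one-dimensional estimates I would rescale in $\zeta$ to reduce to $t=1$ up to the correct power of $t$: substituting $\zeta \mapsto |t|^{-1/\alpha_i}\zeta$ turns $K_t^{(i)}(z)$ into $|t|^{-(1-\beta_i)/\alpha_i}$ times an integral with phase $\pm|\zeta|^{\alpha_i} + \tilde z \zeta$ and amplitude $|\zeta|^{-\beta_i(1+i\mu)}$; since $\beta_i = 1 - \alpha_i/2$, the prefactor is exactly $|t|^{-1/2}$. It then remains to bound the rescaled integral uniformly in the (rescaled) parameter $\tilde z$. I would split the $\zeta$-integral into a neighborhood of the origin, a neighborhood of any stationary point of $\pm|\zeta|^{\alpha_i} + \tilde z\zeta$, and the complementary region. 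Near the origin the amplitude $|\zeta|^{-\beta_i}$ is integrable (as $\beta_i < 1$), so that piece is bounded by a constant; on the stationary-point region one uses Van der Corput's lemma (second-derivative version, since $\partial_\zeta^2|\zeta|^{\alpha_i} = \alpha_i(\alpha_i-1)|\zeta|^{\alpha_i-2}$ does not vanish away from the origin, with the borderline $\alpha_i=1$ excluded by hypothesis), after splitting off the amplitude via the standard $\int |a'|$ bound; on the non-stationary region one integrates by parts repeatedly. Care must be taken that in the region where $\alpha_i<1$ the second derivative $|\zeta|^{\alpha_i-2}$ blows up at the origin rather than degenerating, so the "bad" region is actually near $\infty$, and the roles of the Van der Corput and decay arguments are swapped accordingly; this is the analogue of the case split ($\alpha>1$ vs. $\alpha<1$) already seen in the proof of \cref{Bernstein5}.

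The main obstacle is the combination of two features absent from the classical $\alpha_i=2$ case: the amplitude $|\zeta|^{-\beta_i(1+i\mu)}$ has a (complex) singularity at the origin, and the phase $|\zeta|^{\alpha_i}$ is only $C^1$ (not smooth) at the origin when $\alpha_i<1$ or has a degenerate second derivative there when $\alpha_i>1$. Decoupling the near-origin contribution from the oscillatory estimate so that neither the singular amplitude nor the phase degeneracy ruins the $|t|^{-1/2}$ rate is the delicate point; the key quantitative input is $\beta_i<1$, which keeps $|\zeta|^{-\beta_i}$ locally integrable and also controls the blow-up of amplitude derivatives, and which is exactly the condition flagged in the footnote to \cref{disp est2}. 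Everything else — the product structure, the rescaling, the Van der Corput and integration-by-parts estimates — is routine once this splitting is set up correctly.
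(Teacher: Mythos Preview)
Your proposal is correct and follows essentially the same route as the paper's proof: reduce to the one-dimensional estimate via the product structure, scale out $t$ (the paper does this upfront by setting $t=1$; your per-factor rescaling is equivalent and yields the same $|t|^{-1/2}$ prefactor since $(1-\beta_i)/\alpha_i = 1/2$), and then split the $\zeta$-line into a neighborhood of the origin (handled by $\beta_i<1$), a neighborhood of the stationary point (handled by stationary phase / Van der Corput), and the complementary region (handled by integration by parts / non-stationary phase), with a case distinction between $\alpha_i>1$ and $\alpha_i<1$. The paper's proof is simply a more detailed execution of this outline, with explicit choices of cutoff functions and region endpoints.
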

\begin{proof}
Without loss of generality, we prove the estimate at $t=1$. By showing
\begin{equation*}
    \iint e^{-i(|\xi|^{\alpha_1}+|\eta|^{\alpha_2}) + i(x\xi+y\eta)}|\xi|^{-\beta_1(1+i\mu)}|\eta|^{-\beta_2(1+i\mu)} d\xi d\eta \in L^\infty(\mathbb{R}^2),
\end{equation*}
the proof is immediate by the Young's inequality. The integrand is a product, and therefore it suffices to show
\begin{equation*}
    K(y)\coloneqq \int e^{-i |\eta|^\alpha + i y \eta}|\eta|^{-\beta(1+i\mu)}d\eta \in L^\infty (\mathbb{R}),
\end{equation*}
for $\alpha \in (0,2]\setminus\{1\}$ and $\beta = 1-\frac{\alpha}{2}$. Since $K$ is an even function, let $y \geq 0$.

\textbf{Case I: $\alpha>1$.}

Suppose $y \leq 1$. Since $\beta<1$, the integral on $[-R,R]$ is bounded uniformly for all $y \in \mathbb{R}$ by the triangle inequality where $R>0$ is some constant to be determined. Integrating by parts,
\begin{equation}\label{IBP}
    \int_R^\infty e^{-i\eta^\alpha+iy\eta}\eta^{-\beta(1+i\mu)}d\eta = -\int_R^\infty e^{-i\eta^\alpha+iy\eta} \partial_\eta \Big(\frac{\eta^{-\beta(1+i\mu)}}{i(-\alpha \eta^{\alpha-1}+y)}\Big)d\eta +O\Big(\frac{R^{-\beta}}{|-\alpha R^{\alpha-1}+y|}\Big).  
\end{equation}

There exists $R>0$ sufficiently big such that whenever $\eta \geq R$,
\begin{equation*}
    |-\alpha \eta^{\alpha-1} + y| \geq \alpha |\eta|^{\alpha-1} - |y| \geq |\eta|^{\alpha-1}, 
\end{equation*}
and therefore the boundary term is estimated above by $R^{-\beta - \alpha +1} = R^{-\frac{\alpha}{2}} = O(1)$.
As for the integral term, note that
\begin{equation*}
    \left| \partial_\eta \Big(\frac{\eta^{-\beta(1+i\mu)}}{-\alpha\eta^{\alpha-1}+y}\Big)\right| \lesssim \frac{\eta^{-(\beta+1)}}{|-\alpha\eta^{\alpha-1}+y|} + \frac{\eta^{-\beta+\alpha-2}}{|-\alpha\eta^{\alpha-1}+y|^2} \lesssim \eta^{-(\alpha+\beta)} = \eta^{-(1+\frac{\alpha}{2})},
\end{equation*}
and therefore the LHS of \cref{IBP} is bounded in $y$. Similarly the integral for $K(y)$ on $(-\infty,-R)$ can be shown to be uniformly bounded for $|y|\leq 1$ under the change of variable $\eta \mapsto -\eta$. 

Now suppose $y \geq 1$ and let $\Phi_y(\eta) \coloneqq -\frac{|\eta|^\alpha}{y}+\eta$ so that $K(y) = \int e^{iy \Phi_y(\eta)}|\eta|^{-\beta(1+i\mu)} d\eta$. Note that
\begin{equation*}
\begin{split}
    \Phi_y^\prime(\eta) = -\frac{\alpha |\eta|^{\alpha-1}sgn(\eta)}{y}+1;\:
    \Phi_y^{\prime\prime}(\eta)= -\frac{\alpha(\alpha-1)|\eta|^{\alpha-2}}{y}.
\end{split}
\end{equation*}
Define
\begin{equation}\label{critpt}
    \xi_0 = (\frac{y}{\alpha})^{\frac{1}{\alpha-1}},\: \xi_1 = 2^{-\frac{1}{\alpha-1}}\xi_0,\: \xi_2 = 2^{\frac{1}{\alpha-1}}\xi_0.
\end{equation}
By direct computation, one can verify
\begin{equation}\label{critpt2}
    \Phi_y^\prime(\xi_0) = 0,\: \Phi_y^{\prime\prime}(\xi_0) =-\alpha^{\frac{1}{\alpha-1}}(\alpha-1) y^{-\frac{1}{\alpha-1}}.
\end{equation}
In particular, $\xi_0$ is a non-degenerate critical point. We now estimate
\begin{equation*}
\begin{split}
I &= \int e^{iy\Phi_y(\eta)}|\eta|^{-\beta(1+i\mu)}\zeta(\eta)d\eta;\: II= \int_{\frac{\xi_1}{2}}^{\xi_1} e^{iy\Phi_y(\eta)}|\eta|^{-\beta(1+i\mu)}(1-\zeta(\eta))d\eta\\
III&= \int_{\xi_2}^{2\xi_2}e^{iy\Phi_y(\eta)}|\eta|^{-\beta(1+i\mu)}(1-\zeta(\eta))d\eta;\: IV=\int_{(-\infty,\frac{\xi_1}{2}]\cup [2\xi_2,\infty)} e^{iy\Phi_y(\eta)}|\eta|^{-\beta(1+i\mu)}d\eta,
\end{split}
\end{equation*}
where $\zeta \in C^\infty_c(\mathbb{R})$ is a smooth bump localized around $\xi_0$ defined by $\zeta(\eta) = \psi(\frac{\eta-\xi_0}{\xi_0})$ where a smooth bump function $0 \leq \psi \leq 1$ is given by
\begin{equation*}
\psi(\eta^\prime) =
\begin{cases*}
      1 &, $\eta^\prime \in [-1+2^{-\frac{1}{\alpha-1}},2^{\frac{1}{\alpha-1}}-1]$\\
      0 &, $\eta^\prime\leq -1 + 2^{-\frac{\alpha}{\alpha-1}}$ or $\eta^\prime \geq 2^{\frac{\alpha}{\alpha-1}}-1$.
    \end{cases*}
\end{equation*}
By inspection, $K(y) = I+II+III+IV$. Note that $\zeta$ depends on $y$ whereas $\psi$ depends only on $\alpha$. Changing variables and applying the method of stationary phase as \cite[Proposition 3, p.334]{stein1993harmonic} (in the support of $\psi$, $\Phi_y$ is smooth, and therefore the method of stationary phase can be applied), we obtain
\begin{equation*}
\begin{split}
    I &= \int e^{iy\Phi_y(\eta)}|\eta|^{-\beta(1+i\mu)}\psi(\frac{\eta-\xi_0}{\xi_0})d\eta= \xi_0 \int e^{iy\Phi_y(\xi_0 + \xi_0 \eta^\prime)}|\xi_0 + \xi_0 \eta^\prime|^{-\beta(1+i\mu)} \psi(\eta^\prime)d\eta^\prime\\
    &= Ce^{iy\Phi_y(\xi_0)}\psi(0)|\xi_0|^{-\beta(1+i\mu)} \Phi_y^{\prime\prime}(\xi_0)^{-\frac{1}{2}}y^{-\frac{1}{2}} + O_\psi(y^{-\frac{3}{2}})
\end{split}
\end{equation*}
as $y \to \infty$ for some $C>0$. By \cref{critpt,critpt2}, the dominant term is of order $y^0$. This shows $I = O(1)$. To show $II=O(1)$, we use the Van der Corput Lemma \cite[Eq 6, p.334]{stein1993harmonic}. Since $\Phi^\prime_y \geq \frac{1}{2}$ and is monotonic on $[\frac{\xi_1}{2},\xi_1]$ with $\zeta(\xi_1)=1$, we have
\begin{equation*}
\begin{split}
    |II| \lesssim y^{-1}\int_{\frac{\xi_1}{2}}^{\xi_1}\left|\partial_\eta (|\eta|^{-\beta(1+i\mu)}(1-\zeta(\eta)))\right|d\eta\lesssim y^{-1} \int_{\frac{\xi_1}{2}}^{\xi_1} \eta^{-(\beta+1)} + \eta^{-\beta}\xi_0^{-1}d\eta \lesssim y^{-1} \xi_0^{-\beta} \simeq y^{-(1+\frac{\beta}{\alpha-1})}.
\end{split}
\end{equation*}
Hence, $II = O(1)$ and $III=O(1)$ can be shown similarly.

Finally, $IV$ can be shown to be $O(1)$ in a similar way. An extra care is needed due to the singularity of $|\eta|^{-\beta(1+i\mu)}$ and the non-smoothness of $\Phi_y$ at the origin. This can be done by splitting the integral in regions $(-\infty,-1] \cup [-1,1] \cup [1,\frac{\xi_1}{2})\cup [2\xi_2,\infty)$. More precisely, one applies the Van der Corput Lemma on the first, third, and the fourth region, and the triangle inequality on $[-1,1]$ using $\beta<1$.

\textbf{Case II: $0<\alpha<1$.}

As before, the integral on $\eta \in [-\epsilon,\epsilon]$ is uniformly bounded in $y\in \mathbb{R}$ where $\epsilon>0$ is a constant to be determined. For the integral on $(-\infty,-\epsilon]$,  change variables $\eta \mapsto -\eta$ to obtain
\begin{equation}\label{IBP2}
    \int_\epsilon^\infty e^{-i\eta^\alpha-iy\eta}\eta^{-\beta(1+i\mu)}d\eta = -\int_\epsilon^\infty e^{-i\eta^\alpha-iy\eta} \partial_\eta \Big(\frac{\eta^{-\beta(1+i\mu)}}{i(-\alpha \eta^{\alpha-1}-y)}\Big)d\eta +O\Big(\frac{\epsilon^{-\beta}}{|-\alpha \epsilon^{\alpha-1}-y|}\Big).  
\end{equation}
Since $|-\alpha \eta^{\alpha-1}-y| \geq \alpha \eta^{\alpha-1}$ for all $\eta > 0$, one can reason as in the case $\alpha>1$ to show that the integral on $(-\infty,-\epsilon]$ is bounded in $y \in \mathbb{R}$. It remains to show that the integral on $\eta \in [\epsilon,\infty)$ is bounded in $y$.

First assume $y \leq 1$. For $\eta \in D:= (\epsilon,(\frac{2y}{\alpha})^{\frac{1}{\alpha-1}}) \cup ((\frac{2y}{3\alpha})^{\frac{1}{\alpha-1}},\infty)$, it follows from the triangle inequality that $|-\alpha \eta^{\alpha-1}-y| \geq \frac{\alpha}{2}\eta^{\alpha-1}$, and therefore the integral on $D$ is bounded in $y \leq 1$ by the integration by parts argument as in \cref{IBP2}.

On $\eta\in D_1:=((\frac{2y}{\alpha})^{\frac{1}{\alpha-1}},(\frac{2y}{3\alpha})^{\frac{1}{\alpha-1}})$, the integral is estimated by the method of stationary phase. For $\xi_0 := (\frac{y}{\alpha})^{\frac{1}{\alpha-1}}$, define $\zeta(\eta) = \psi(\frac{\eta-\xi_0}{\xi_0})$ where a smooth bump $0 \leq \psi \leq 1$ is given by
\begin{equation*}
\psi(\eta^\prime) =
\begin{cases*}
      1 &, $\eta^\prime \in [(\frac{4}{3})^{\frac{1}{\alpha-1}}-1,(\frac{4}{5})^{\frac{1}{\alpha-1}}-1]$\\
      0 &, $\eta^\prime\leq 2^{\frac{1}{\alpha-1}}-1$ or $\eta^\prime \geq (\frac{2}{3})^{\frac{1}{\alpha-1}}-1$.
    \end{cases*}
\end{equation*}
Then,
\begin{equation*}
\begin{split}
    \int_{D_1}e^{-i\eta^\alpha+iy\eta}\eta^{-\beta(1+i\mu)}d\eta &= \int_{D_1}e^{i\lambda \Phi_\lambda(\eta)}\eta^{-\beta(1+i\mu)}\zeta(\eta)d\eta+
    \int_{D_1}e^{-i\eta^\alpha+iy\eta}\eta^{-\beta(1+i\mu)}(1-\zeta(\eta))d\eta\\
    &=:A+B,
\end{split}
\end{equation*}
where
\begin{equation*}
\lambda := \frac{1}{y},\: \Phi_\lambda(\eta) := -\frac{\eta^\alpha}{\lambda} + \frac{\eta}{\lambda^2}.
\end{equation*}
By changing variables $\eta^\prime = \frac{\eta - \xi_0}{\xi_0}$,
\begin{equation*}
\begin{split}
    A &= \xi_0 \int_{2^{\frac{1}{\alpha-1}}-1}^{(\frac{2}{3})^{\frac{1}{\alpha-1}}-1} e^{i\lambda \Phi_\lambda(\xi_0 + \xi_0 \eta^\prime)}|\xi_0 + \xi_0 \eta^\prime|^{-\beta(1+i\mu)}\psi(\eta^\prime)d\eta^\prime\\
    &= C e^{i\lambda \Phi_\lambda(\xi_0)}|\xi_0|^{-\beta(1+i\mu)} \psi(0) \lambda^{\frac{2\alpha-3}{2(\alpha-1)}}\lambda^{-\frac{1}{2}} + O_{\psi}(\lambda^{-\frac{3}{2}})\\
    &= C^\prime e^{i\lambda \Phi_{\lambda}(\xi_0)}|\xi_0|^{-i\beta\mu} + O_\psi(\lambda^{-\frac{3}{2}}), 
\end{split}
\end{equation*}
as $\lambda \rightarrow \infty$, or equivalently, as $y \rightarrow 0+$. On the other hand, observe that if $\eta \in D_1 \setminus [(\frac{4}{3})^{\frac{1}{\alpha-1}}\xi_0,(\frac{4}{5})^{\frac{1}{\alpha-1}}\xi_0]$, then $|\alpha \eta^{\alpha-1}-y| \geq \frac{1}{4}\eta^{\alpha-1}$. Integrate by parts to obtain
\begin{equation*}
\begin{split}
    |B| &\lesssim \int_{D_1} \left|\partial_\eta\Big(\frac{\eta^{-\beta(1+i\mu)}(1-\zeta(\eta))}{\alpha \eta^{\alpha-1}-y}\Big)\right|d\eta + O(y^{\frac{\alpha}{2(1-\alpha)}})\\
    &\lesssim\int_{D_1 \setminus [(\frac{4}{3})^{\frac{1}{\alpha-1}}\xi_0,(\frac{4}{5})^{\frac{1}{\alpha-1}}\xi_0]} \eta^{-(1+\frac{\alpha}{2})} + \eta^{-\frac{\alpha}{2}}\xi_0^{-1} d\eta + O(y^{\frac{\alpha}{2(1-\alpha)}}) \lesssim y^{\frac{\alpha}{2(1-\alpha)}}.
\end{split}
\end{equation*}
For $y \geq 1$, it suffices to show $\varlimsup\limits_{y \rightarrow \infty} |K(y)|<\infty$. Since $\alpha<1$, the critical point $\xi_0\rightarrow 0$ as $y \rightarrow \infty$. Hence it suffices to show that the integral for $K(y)$ on $\eta \in [1,\infty)$ is uniformly bounded in $y \geq 1$. Since $\partial_\eta(-\frac{\eta^\alpha}{y}+\eta) = -\frac{\alpha}{y}\eta^{\alpha-1} + 1$ is monotonic and is bounded below by $\frac{1}{2}$ for $\eta \geq 2^{-\frac{1}{\alpha-1}}\xi_0$, it follows from the Van der Corput Lemma that
\begin{equation*}
    \left|\int_1^\infty e^{-i\eta^\alpha+iy\eta}\eta^{\beta(1+i\mu)}d\eta\right| \lesssim y^{-1}\int_1^\infty \eta^{-(\beta+1)}d\eta \lesssim y^{-1}.
\end{equation*}
\end{proof}
\begin{proof}[Proof of \cref{nonlin.est.}]
If $F$ is of the form \cref{Schauder} or \cref{Schauder2}, then $F \in C^{k-1,1}_{loc}$ for $k = \lfloor p \rfloor$ or $k = \lfloor p \rfloor-1$, respectively; moreover, if $p$ is an odd integer, then $F$ is a multilinear combination of $u$ and $\overline{u}$, and hence smooth.

For $s=0$, the proof follows from the H\"older's inequality since $F$ is locally Lipshitz and $F(0)=0$. Therefore assume $s>0$. By the Plancherel Theorem,
\begin{equation}\label{lp2}
    \| F(u) \|_{H^s_\alpha} \simeq \| P_{<1} F(u) \|_{L^2} + (\sum_{N \geq 1} N^{2s} \| P_N F(u) \|_{L^2}^2)^{1/2}.
\end{equation}
The low frequency component reduces to $s=0$ case since
\begin{equation*}
    \| P_{<1} F(u) \|_{L^2} \lesssim \| F(u) \|_{L^2} \lesssim  \| u \|_{L^2}.
\end{equation*}
Let $N\geq 1$. Since $\| u \|_{L^\infty}, \| P_{<N} u \|_{L^\infty} =O(1)$ and $F$ is locally Lipschitz, we have a pointwise estimate
\begin{equation*}
    |F(u) - F(P_{<N}u)| \lesssim |P_{ \geq N}u|,
\end{equation*}
and taking $P_N$ both sides, followed by taking the $L^2$ norm,
\begin{equation*}
    \| P_N F(u) \|_{L^2} \lesssim \| P_N F(P_{<N}u)\|_{L^2} + \| P_{\geq N} u \|_{L^2}.
\end{equation*}
Using the Cauchy-Schwarz inequality on the second term,
\begin{equation*}
\begin{split}
    \sum_{N \geq 1} N^{2s} \| P_{\geq N} u \|_{L^2} &\simeq \sum_{N \geq 1} \sum_{N^\prime \geq N} N^{2s}\| P_{N^\prime}u\|_{L^2}^2\\
    &= \sum_{N^\prime \geq 1} \sum_{1 \leq N \leq N^\prime} N^{2s}\| P_{N^\prime} u \|_{L^2}^2 \lesssim \sum_{N^\prime \geq 1} (N^\prime)^{2s} \| P_{N^\prime} u \|_{L^2}^2 \lesssim \| u \|_{H^s_\alpha}^2.
\end{split}
\end{equation*}
Now we wish to show that the first term is summable to a term controlled by $\| u \|_{H^s_\alpha}$. First, assume $s=k$. Then,
\begin{equation*}
\begin{split}
    \| |\nabla_\alpha|^k F(u) \|_{L^2} &= \| (\xi^2+|\eta|^\alpha)^{k/2} \widehat{F(u)}\|_{L^2} \simeq \| (|\xi|^k + |\eta|^{\frac{\alpha}{2}k}) \widehat{F(u)}\|_{L^2}\\
    &\leq \| |\xi|^k \widehat{F(u)} \|_{L^2} + \| |\eta|^{\frac{\alpha}{2}k}\widehat{F(u)}\|_{L^2} \lesssim \| \partial_x^k F(u) \|_{L^2} + \| D^{\frac{\alpha}{2}k}F(u) \|_{L^2}\\
    &\lesssim \| \partial_x^k u \|_{L^2} + \| u \|_{H^{\frac{\alpha}{2}k}} \lesssim \| u \|_{H^k_\alpha}. 
\end{split}
\end{equation*}
By \cref{Bernstein3} and the triangle inequality,
\begin{equation*}
    \| P_N F(P_{<N}u)\|_{L^2} \lesssim N^{-k} \||\nabla_\alpha|^k F(P_{<N}u)\|_{L^2} \lesssim N^{-k}(\| \partial_x^k F(P_{<N}u) \|_{L^2} + \| D_2^{\frac{\alpha}{2}k}F(P_{<N}u)\|_{L^2}).
\end{equation*}
Let $Q_1 = P_{\leq 1}$ and $Q_N = P_N$ if $N>1$. Then by reasoning as \cite[Proposition A.9]{tao2006nonlinear}, we obtain
\begin{equation*}
    \| \partial_x^k F(P_{<N}u) \|_{L^2} \lesssim \sum_{1 \leq N^\prime <N} (N^\prime)^k \| Q_{N^\prime}u \|_{L^2}.
\end{equation*}
For the fractional derivative,
\begin{equation*}
\begin{split}
    \| D_2^{\frac{\alpha}{2}k}F(P_{<N}u)\|_{L^2} &\lesssim \| D^{\frac{\alpha}{2}k}F(P_{<N}u)\|_{L^2} \lesssim \| P_{<N} u \|_{H^{\frac{\alpha}{2}k}}\\
    &\lesssim \sum_{1 \leq N^\prime <N} \| Q_{N^\prime} u \|_{H^{\frac{\alpha}{2}k}} \lesssim \sum_{1\leq N^\prime <N} (N^\prime)^k \| Q_{N^\prime} u \|_{L^2},
\end{split}
\end{equation*}
and therefore combining the $x$ and $y$ derivatives,
\begin{equation*}
\| P_N F(P_{<N}u)\|_{L^2} \lesssim N^{-k} \sum_{1\leq N^\prime <N} (N^\prime)^k \| Q_{N^\prime} u \|_{L^2} \lesssim \Big(\sum_{1 \leq N^\prime <N} (N^\prime)^{2k-\epsilon}N^{-2k+\epsilon}\| Q_{N^\prime} u \|_{L^2}^2\Big)^{1/2}.    
\end{equation*}
By taking $\epsilon>0$ sufficiently small depending on $s,k$, and by \cref{lp2} and the Cauchy-Schwarz inequality,
\begin{equation*}
\Big(\sum\limits_{N \geq 1} N^{2s}\| P_N F(P_{<N}u)\|_{L^2}^2\Big)^{1/2} \lesssim_\epsilon \| u \|_{H^s_\alpha}.   
\end{equation*}
For $F(u) = |u|^{p-1}u$ or $|u|^{p-1}$, it can be verified that the implicit constant is of the form $C\| u \|_{L^\infty}^{p-1}$ or $C \| u \|_{L^\infty}^{p-2}$ where $C=C(s,p,\alpha)>0$.
\end{proof}
\begin{proof}[Proof of \cref{GN inequality}]
Since \cref{gagliardo} is invariant under $u(x,y) \mapsto \mu u(\frac{x}{\lambda},\frac{y}{\lambda^{2/\alpha}})$ for $\mu,\lambda>0$, we assume without loss of generality that $\| u \|_{\dot{W}_\alpha^{s,p}} = \| u \|_{L^p}=1$. By the triangle inequality and \cref{Bernstein},
\begin{equation*}
    \| u \|_{L^q} \leq \sum_{N\in 2^\mathbb{Z}} \| P_N u \|_{L^q} \lesssim \sum_N N^{(1+\frac{2}{\alpha})(\frac{1}{p}-\frac{1}{q})}\| P_N u \|_{L^p} = \sum_N N^{s\theta}\| P_N u \|_{L^p}.
\end{equation*}
Since $\| P_N u \|_{L^p}\lesssim \| u \|_{L^p}=1$ and $\| P_N u \|_{L^p} \simeq N^{-s} \| P_N |\nabla_\alpha|^s u \|_{L^p} \lesssim N^{-s} \| u \|_{\dot{W}_\alpha^{s,p}} = N^{-s}$ by \cref{Bernstein3,Bernstein5},
\begin{equation*}
    \| u \|_{L^q} \lesssim \sum_{N \in 2^\mathbb{Z}} N^{s\theta} \min (1,N^{-s})<\infty.
\end{equation*}
\end{proof}
\bibliographystyle{unsrt}
\bibliography{ref}
\end{document}